\documentclass[a4paper,reqno,11pt]{amsart}

\usepackage{amsmath,amsfonts,amssymb}
\usepackage{graphics,color}
\usepackage[latin1]{inputenc}
\usepackage[colorlinks = true,
            linkcolor = blue,
            urlcolor  = blue,
            citecolor = blue,
            anchorcolor = blue]{hyperref}
\usepackage{mathrsfs}
\newtheorem{theorem}{Theorem}[section]
\newtheorem{lemma}[theorem]{Lemma}

\makeatletter
\@namedef{subjclassname@2020}{\textup{2020} Mathematics Subject Classification}
\makeatother

\numberwithin{equation}{section}
\newtheorem{proposition}[theorem]{Proposition}
\newtheorem{corollary}[theorem]{Corollary}

\newtheorem{remark}[theorem]{Remark}

\newcommand{\Om}{\Omega}
\newcommand{\Gm}{\Gamma}
\newcommand{\pa}{\partial}

\newcommand{\dist}{\operatorname{dist}}

\newcommand{\Span}{\operatorname{span}}
\newcommand{\inner}[2]{\langle #1,#2\rangle}
\newcommand{\norm}[1]{\lVert #1\rVert}

\newcommand{\da}{\delta_a}
\newcommand{\ea}{\varepsilon_a}

\usepackage{cite}
\numberwithin{equation}{section}

\allowdisplaybreaks[4]
\begin{document}
	
\title[Attractive Bose-Einstein condensates with a ring-shaped potential]
{Boundary geometry and concentration points for attractive Bose-Einstein condensates with a ring-shaped potential}

\author{Shubin Yu}
\address{Chongqing Key Laboratory of Group \& Graph Theories and Applications, Chongqing University of
Arts and Sciences, Chongqing 402160, People's Republic of China.}
\email{yshubin168@email.swu.edu.cn}

\author{Chun-Lei Tang$^*$}
\address{School of Mathematics and Statistics, Southwest University, Chongqing 400715, People's Republic of China.}
\email{tangcl@swu.edu.cn}

\author{Ziheng Zhang}
\address{School of Mathematical Sciences, Tiangong University, Tianjin 300387, People's Republic of China.}
\email{zhzh@mail.bnu.edu.cn}

\footnotetext[1]{Corresponding author.}
\footnotetext[2]{Project supported by the National Natural Science Foundation of China (No.12371120).}
\subjclass[2020]{35J60, 35Q40, 46N50.}

\keywords{Bose-Einstein condensates; ring-shaped potential; mass concentration; Lyapunov-Schmidt reduction.}

\begin{abstract}
In this paper, we study the ground states of attractive
Bose-Einstein condensates in a bounded domain $\Omega\subset \mathbb R^2$ under the
ring-shaped trapping potential
$$
V(x)=(|x|-1)^2.
$$
The boundary geometry of $\Omega$ may allow the ring $\Gamma=\{x\in\mathbb R^2:|x|=1\}$
of global minima for $V(x)$ to meet both the interior and the boundary of $\Omega$. As the interaction strength approaches the
critical value, we prove that if $\Gamma\cap \Omega\not=\emptyset$, the mass of ground states
concentrates at a point $x_0\in\Gamma\cap \Omega$ that has maximal distance from $\partial\Omega$.
Our method relies on exponential upper and lower bounds for the difference between the bounded domain and whole space minimal energies. The essential lower bound follows from a uniform quadratic-growth estimate near
the ground state manifold of the whole space problem, which is established by combining the nondegeneracy of the optimizer for the Gagliardo-Nirenberg inequality with a Lyapunov-Schmidt reduction.
\end{abstract}
\maketitle
\vspace {-1cm}

\section{Introduction and main results}

Bose-Einstein condensation is a quantum phenomenon in which a macroscopic fraction of bosons occupies the same single-particle state below a critical temperature.
 Since its experimental realization in cold atomic gases
\cite{Anderson1995,Dalfovo1999}, the Gross-Pitaevskii (GP) theory has become a
standard mean-field description of trapped condensates, see for example
\cite{Gross1963,Gross1961,Pitaevskii1961,BaoCai2013}.
The forces
between the atoms in the condensates can be either attractive or repulsive.
In the attractive case, the system collapses if the particle number increases
beyond a critical value, see
\cite{Sackett1998,GuoSeiringer2014}.

To investigate the details of this collapse, Guo and Seiringer \cite{GuoSeiringer2014} studied the following GP functional
\begin{equation}\label{eqn:intro-GP-energy}
E_a^\infty(u)
:=
\int_{\mathbb R^2}\bigl(|\nabla u(x)|^2+V(x)|u(x)|^2\bigr)\,\mathrm dx
-\frac a2\int_{\mathbb R^2}|u(x)|^4\,\mathrm dx,\ u\in H_V^1(\mathbb R^2),
\end{equation}
where \(a>0\) describes the strength of the attractive interaction, \(V\)
is the trapping potential and
\[
H_V^1(\mathbb R^2)
:=
\left\{
u\in H^1(\mathbb R^2):
\int_{\mathbb R^2}V(x)|u(x)|^2\mathrm dx<\infty
\right\}
\]
endowed with the norm
\[
\|u\|_{H_V^1(\mathbb R^2)}^2
:=
\int_{\mathbb R^2}
\left(|\nabla u(x)|^2+(1+V(x))|u(x)|^2\right)\mathrm dx.
\]
The ground states of attractive Bose-Einstein condensates in $\mathbb R^2$ are  described by the
 minimizers of \eqref{eqn:intro-GP-energy} under the unit mass constraint $\|u\|_{L^2(\mathbb R^2)}=1.$
By the classical Gagliardo-Nirenberg inequality
\begin{equation}\label{eqn:GN}
\int_{\mathbb R^2}|u(x)|^4\mathrm dx
\le
\frac2{a^*}
\int_{\mathbb R^2}|\nabla u(x)|^2\mathrm dx
\int_{\mathbb R^2}|u(x)|^2\mathrm dx,
\end{equation}
they determined that $a^*>0$ is the critical value
 such that the minimizers exist if and only if $0<a<a^*$.
Here $a^*:=\|Q\|_{L^2(\mathbb R^2)}^2$, $Q$ is the
optimizer for the Gagliardo-Nirenberg inequality and the
unique positive solution (up to translations) of the scalar field equation
\[
-\Delta Q+Q-Q^3=0
\ \text{in }\mathbb R^2,
\]
see \cite{Weinstein1983}.
Moreover, they analyzed the mass concentration of minimizers as \(a\nearrow a^*\)
for the trapping potential
\begin{equation}\label{eqn:intro-product-potential}
V(x)
=
h(x)\prod_{i=1}^n|x-x_i|^{p_i},
\
p_i>0,\
C<h(x)<C^{-1},
\end{equation}
and showed that the mass concentrates at one of the flattest global minima of \(V(x)\).

Ring traps also arise naturally in experiments on
Bose-Einstein condensates, see for example
\cite{Ryu2007,HalkyardJonesGardiner2010}.  Compared to \eqref{eqn:intro-product-potential}, the present case has a non-discrete set of
global minima for the trapping potential $V(x)$.  For the ring-shaped potential
\begin{equation*}
V_A(x)=(|x|-A)^2,\  A>0,
\end{equation*}
Guo, Zeng and Zhou \cite{GuoZengZhou2016} proved that each minimizer concentrates at a point on the circular
bottom \(\{|x|=A\}\) and that symmetry breaking occurs as
\(a\nearrow a^*\).
Related results for attractive Bose-Einstein condensates and GP functionals in
$\mathbb R^2$ include \cite{GuoZhou2019,GuoWangZengZhou2018}
 for ellipse-shaped and multi-well
potentials, \cite{GuoLinWei2017} for the
local uniqueness and refined spike profiles, and \cite{PengYangZeng2024} for uniqueness modulo
rotations in the ring-shaped setting.

In what follows, we leave the whole space setting and turn to domains with boundary, where boundary concentration may occur. An early study of
this phenomenon is the work of \cite{GuoLuoZhang2018}, where the authors considered
 the mass-critical Hartree energy functional.
For a bounded domain \(\Omega\) in
\(\mathbb R^4\), they analyzed the refined concentration behavior of
\(L^2\)-constrained minimizers and showed that the mass concentrates either
at an interior point or near the boundary, depending on whether the trapping
potential \(V(x)\) attains its flattest global minimum at an interior point of \(\Omega\) or not.
For the GP functional in bounded domains, similar results were obtained in
\cite{LuoZhu2019,YangTang2025}. In the ellipse-shaped setting, Guo, Yan and Zhao
\cite{GuoYanZhao2025} distinguished the cases where the relevant endpoints
of the major axis lie in the interior or on the boundary.
On the other hand, we mention that
Deng, Guo and Xu \cite{DengGuoXu2021} studied the GP functional in exterior domains, which can describe the
trapped
attractive Bose-Einstein condensates passing an obstacle in the plane.
More precisely, they gave the limit behavior of minimizers when the trapping potential $V(x)$ attains its global minima only along the whole
boundary, and proved that the mass concentration occurs at the flattest critical point of $V(x)$ on the boundary.
Li and Wang \cite{LiWang2025} investigated
potentials whose global minima occur both on the boundary and at an interior
point, such as $V(x)=(|x|-r)^2|x-y_0|^\alpha$ with $\alpha>0$, $\Omega=B_r(0)$ and $y_0\in \Omega$,
and obtained the blow-up criterion depending on whether $\alpha<2$.

Motivated by the above works, we
are interested in the ring-shaped potential
\begin{equation}\label{eqn:intro-our-ring}
V(x)=(|x|-1)^2
\end{equation}
in a domain \(\Omega\subset\mathbb R^2\), which may be a bounded or exterior domain.
Define the ring of global minima of $V(x)$ by $\Gamma:=\{x\in\mathbb R^2:|x|=1\}.$
If $\Gamma\subset \partial\Omega$, the concentration behavior is of boundary type, as described in \cite{DengGuoXu2021}.
 If $\Gamma\subset \Omega$, then, as in the whole space problem, interior concentration occurs, but the exact concentration point is difficult to determine because all points of \(\Gamma\) have
the same local behavior. In addition, a
case of particular interest is
the mixed geometry
\begin{equation}\label{eqn:intro-mixed-geometry}
\Gamma\cap\Omega\neq\emptyset,
\
\Gamma\cap\partial\Omega\not=\emptyset,
\end{equation}
where the ring $\Gamma$ meets both the interior and
 the boundary.
In this case, the previous methods do not directly determine the concentration point, even if one only seeks to determine whether the concentration occurs in the interior.  Indeed, using the Gagliardo-Nirenberg
inequality with a remainder \cite{CarlenFrankLieb2014}, one can  establish an exponential estimate involving
the distance from the maximum point of minimizers to the boundary.
This estimate is effective
 for treating the case where
 all minima of $V(x)$ are discrete, lie entirely on the boundary or are of mixed type, see \cite{LiLuo2021,DengGuoXu2021,LiWang2025}.
However, the
estimate does not
provide the upper and lower bound estimates needed to
distinguish among different
interior or boundary points of $\Gamma$ if $V(x)$ satisfies \eqref{eqn:intro-our-ring} and \eqref{eqn:intro-mixed-geometry}.
This raises a natural question:
 among the points of
\(\Gamma\), which point is the concentration point in the latter two cases?

In this paper, we provide a clear answer to this question
for a bounded domain $\Omega$  satisfying
 the geometric assumption $\Gamma\cap\Omega\neq\emptyset$.
As we will see, the
concentration point is precisely determined by
 distances of the points in $\Gamma$ to $\partial \Omega$.
Let \(\Omega\subset\mathbb R^2\) be a bounded
connected domain with \(C^2\) boundary and let \(V\) be given by
\eqref{eqn:intro-our-ring}.  For \(a\ge0\), define
\begin{equation}\label{eqn:intro-bounded-energy}
E_a^\Omega(u)
:=
\int_\Omega\bigl(|\nabla u(x)|^2+V(x)|u(x)|^2\bigr)\,\mathrm dx
-\frac a2\int_\Omega|u(x)|^4\,\mathrm dx
\end{equation}
and
\begin{equation}\label{eqn:intro-bounded-minimum}
e_\Omega(a)
:=
\inf_{\substack{u\in H_0^1(\Omega)\\ \|u\|_{L^2(\Omega)}=1}}E_a^\Omega(u).
\end{equation}
By \cite{LiLuo2021,DengGuoXu2021}, the same critical value  \(a^*=\|Q\|_{L^2(\mathbb R^2)}^2\) as in the whole space problem arises.
For \(x\in\overline\Omega\), write
\[
d_\Omega(x):=\operatorname{dist}(x,\partial\Omega),
\]
and define
\[
D_\Omega
:=
\max_{x\in\Gamma\cap\overline\Omega}d_\Omega(x),
\
\mathcal M_\Omega
:=
\left\{
x\in\Gamma\cap\overline\Omega:
d_\Omega(x)=D_\Omega
\right\}.
\]
Under the assumption \(\Gamma\cap\Omega\neq\emptyset\), one has
\(D_\Omega>0\), and hence
\(\mathcal M_\Omega\subset\Gamma\cap\Omega\).

Our main results are as follows.
Set
\[
\lambda_0^4
:=
\frac12\int_{\mathbb R^2}|x|^2Q^2(x)\,\mathrm dx
\ \text{and}\
\delta_a
:=
\frac{(a^*-a)^{1/4}}{\lambda_0}.
\]
We first establish the existence and blow-up behavior of minimizers, and then show that the boundary geometry determines the concentration point on the ring.

\begin{theorem}\label{thm:main-1}
Assume that \(\Omega\subset\mathbb R^2\)  is a bounded
connected domain with \(C^2\) boundary and
\[
\Gamma\cap\Omega\neq\emptyset.
\]
\begin{enumerate}
\item [$(i)$]
For every \(0\le a<a^*\), \(e_\Omega(a)\) admits a nonnegative
minimizer.  Moreover, for $a\geq a^*$, \(e_\Omega(a)\) is not attained.
\item[$(ii)$]
Let \(u_a\)  be a nonnegative minimizer of \(e_\Omega(a)\) and \(x_a\) be a
 maximum point of \(u_a\).  Then,
 for any sequence $\{a_k\}$ satisfying
 \(a_k\nearrow a^*\) as $k\to\infty$, there exists a subsequence, still
denoted by $\{a_k\}$, such that each $u_{a_k}$ has a unique maximum point $x_{a_k}$ and
\[
\delta_{a_k} u_{a_k}(x_{a_k}+\delta_{a_k} x)
\to
\frac{Q(x)}{\sqrt{a^*}}
\ \text{strongly in }H^1(\mathbb R^2)\ \text{as }k\to\infty.
\]
 Furthermore,
\[
\frac{|x_{a_k}|-1}{\delta_{a_k}}\to0,
\]
\[
e_\Omega({a_k})
=
\frac{2\lambda_0^2}{a^*}(a^*-{a_k})^{1/2}
+
o\bigl((a^*-{a_k})^{1/2}\bigr)
\]
and $x_{a_k}\to x_0\in \Gamma$ as $k\to\infty$.
\end{enumerate}
\end{theorem}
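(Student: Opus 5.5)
Part $(i)$ is handled by the standard compactness argument in a bounded domain, and part $(ii)$ by a blow-up analysis driven by matching upper and lower energy bounds.

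\textbf{Existence and non-existence, part $(i)$.} For $0\le a<a^*$, the Gagliardo--Nirenberg inequality \eqref{eqn:GN}, applied to the zero extension of $u\in H_0^1(\Omega)$, gives
\[
E_a^\Omega(u)\ge \Bigl(1-\frac a{a^*}\Bigr)\int_\Omega|\nabla u|^2.
\]
So minimizing sequences are bounded in $H_0^1(\Omega)$. Since $\Omega$ is bounded, Rellich's theorem gives compactness in $L^2$ and $L^4$, and weak lower semicontinuity of the gradient term yields a minimizer. Replacing $u$ by $|u|$ makes it nonnegative.

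For $a\ge a^*$ we test with
\[
u_\tau(x)=A_\tau\,\varphi(x-x_1)\,\frac{\tau}{\|Q\|_2}Q(\tau(x-x_1)),
\]
where $x_1\in\Gamma\cap\Omega$ and $\varphi$ is a cutoff supported in $B_{d_\Omega(x_1)}(x_1)$. Exponential decay of $Q$ gives
\[
E_{a}^\Omega(u_\tau)=\frac{a^*-a}{a^*}\tau^2+O(\tau^{-2})+O(e^{-c\tau}).
\]
This tends to $-\infty$ when $a>a^*$ and to $0$ when $a=a^*$, so $e_\Omega(a^*)=0$. For $a=a^*$ a minimizer $u$ would then satisfy
\[
0=\int(|\nabla u|^2-\tfrac{a^*}2|u|^4)+\int V u^2\ge\int Vu^2>0,
\]
since $u$ cannot be supported on the null set $\Gamma$. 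For $a>a^*$ the infimum is $-\infty$, so it is not attained. Both cases use only \eqref{eqn:GN}.

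\textbf{Upper bound.} Keep the same trial function, now with $x_1\in\Gamma\cap\Omega$ and $\tau=\delta_a^{-1}$. Expand
\[
\int V u_\tau^2=\tau^{-2}\frac1{a^*}\int ((|x_1+y/\tau|-1)\tau)^2Q^2\,dy .
\]
Since $|x_1+y/\tau|-1=\langle x_1,y\rangle/\tau+O(|y|^2/\tau^2)$, this equals
\[
\frac{\tau^{-2}}{a^*}\int \langle x_1,y\rangle^2Q^2\,dy\,(1+o(1))=\frac{\tau^{-2}}{a^*}\lambda_0^4(1+o(1)),
\]
using radial symmetry of $Q$. Optimizing $\frac{a^*-a}{a^*}\tau^2+\frac{\lambda_0^4}{a^*}\tau^{-2}$ gives
\[
e_\Omega(a)\le \frac{2\lambda_0^2}{a^*}(a^*-a)^{1/2}(1+o(1)).
\]

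\textbf{Blow-up, part $(ii)$.} Let $u_a$ be a nonnegative minimizer, and set $\epsilon_a^{-2}:=\int|\nabla u_a|^2$. From the upper bound and \eqref{eqn:GN} we get $\int Vu_a^2\to0$ and $\epsilon_a\to0$; if $\epsilon_a$ stayed bounded below, compactness would give a minimizer at $a^*$, contradicting part $(i)$. Define
\[
w_a(x)=\epsilon_a u_a(x_a+\epsilon_a x),
\]
extended by zero. The Euler--Lagrange equation $-\Delta u+Vu=\mu u+a u^3$ yields $\mu_a\epsilon_a^2\to-1$. The De Giorgi--Nash--Moser argument at the maximum point gives $w_a(0)\ge c>0$, so there is no vanishing. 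Next we show that $d_\Omega(x_a)/\epsilon_a\to\infty$; otherwise the limit would be a nontrivial half-space solution of $-\Delta w+w=a^*w^3$ with zero boundary values, which is impossible by the Esteban--Lions nonexistence result. Hence $w_a\to w_0$ along subsequences, where $w_0$ is a GN optimizer with $\|w_0\|_2=1$. It is radial about its maximum at $0$, so $w_0=\frac{\beta}{\|Q\|_2}Q(\beta x)$, and normalization of the gradient gives $\beta=1$. Strong $L^2$ convergence follows from mass conservation, and strong $H^1$ convergence then follows from the equation.

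\textbf{Energy lower bound and identification of the scale (main obstacle).} We write
\[
e_\Omega(a)\ge\frac{a^*-a}{a^*}\epsilon_a^{-2}+\int Vu_a^2,
\]
again by \eqref{eqn:GN}. We then need
\[
\int Vu_a^2\ge\frac{\epsilon_a^2}{a^*}\int\Bigl(\frac{|x_a+\epsilon_ay|-1}{\epsilon_a}\Bigr)^2Q^2\,dy\,(1+o(1)).
\]
Fatou's lemma does this, provided we first show that $(|x_a|-1)/\epsilon_a$ is bounded; if it were not, $\int Vu_a^2\gg\epsilon_a^2$ and the lower bound would contradict the upper bound. Write $p=\lim(|x_a|-1)/\epsilon_a$, which holds along a subsequence, and let $x_0=\lim x_a\in\Gamma$. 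The integrand converges to $(p+\langle x_0,y\rangle)^2$, and
\[
\int (p+\langle x_0,y\rangle)^2Q^2=p^2a^*+\lambda_0^4 .
\]
Minimizing over $\epsilon$ and comparing with the upper bound forces $p=0$ and $\epsilon_a/\delta_a\to1$. This gives both the energy asymptotics and, after rescaling, the stated $H^1$ convergence with $\delta_{a_k}$ in place of $\epsilon_a$.

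Uniqueness of the maximum point for large $k$ follows from the nondegeneracy of $Q$ at its unique maximum together with $C^2_{\rm loc}$ convergence, which comes from elliptic estimates and gives local strict concavity near $0$; exponential decay excludes other maxima far away. The delicate points are the exclusion of boundary-near blow-up and the uniform control required for Fatou's lemma. For the latter, I would use exponential decay of $w_a$, obtained from a comparison principle, to justify the dominated convergence.
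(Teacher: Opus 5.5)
Your proposal is correct and takes essentially the same route as the paper. Part $(i)$ and the blow-up profile, including the exclusion of boundary concentration and the uniqueness of the maximum point, are the standard arguments the paper cites from \cite{GuoSeiringer2014,DengGuoXu2021}. Your identification of the scale is the argument of Lemma~\ref{lem:scale}: bound $r_a$ via the local lower bound on $\int Vu_a^2$, then play the sharp lower bound $\epsilon_a^2(p^2+\lambda_0^4/a^*)$ against the trial-function upper bound to force $p=0$ and $\epsilon_a/\delta_a\to1$. The only difference is cosmetic: you get the kinetic part of the lower bound directly from \eqref{eqn:GN} as $(1-a/a^*)\epsilon_a^{-2}$, whereas the paper uses $\frac{a^*-a}{2}\int u_a^4$ together with $\epsilon_a^2\int u_a^4\to 2/a^*$, and both give the same leading term.
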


\begin{theorem}\label{thm:main}
Under the setting in Theorem {\rm\ref{thm:main-1}}, there holds
\[
d_\Omega(x_{a_k})\to D_\Omega.
\]
Consequently, $x_0\in \mathcal M_\Omega$ and
\[
\operatorname{dist}(x_{a_k},\mathcal M_\Omega)\to0.
\]

\end{theorem}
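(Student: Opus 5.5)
Throughout, write $a=a_k$ and $\varepsilon_a:=\delta_a$ for the blow-up length scale. The proof compares $e_\Omega(a)$ with the whole-space energy $e_\infty(a):=\inf\{E_a^\infty(u):\|u\|_{L^2(\mathbb R^2)}=1\}$. The target is matching upper and lower bounds of the form
\[
e_\Omega(a)-e_\infty(a)\le \exp\Big(-\frac{2(D_\Omega-\eta)}{\delta_a}\Big),
\qquad
e_\Omega(a)-e_\infty(a)\ge \exp\Big(-\frac{2(d_\Omega(x_a)+\eta)}{\delta_a}\Big),
\]
valid for every fixed $\eta>0$ and $a$ close to $a^*$. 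The exponent $2d/\delta_a$ matches the decay $Q(x)\sim |x|^{-1/2}e^{-|x|}$: cutting off the rescaled profile at distance $d/\delta_a$ costs about $Q(d/\delta_a)^2$. Comparing the two bounds gives $d_\Omega(x_a)\ge D_\Omega-2\eta$ for $a$ near $a^*$. Since also $d_\Omega(x_a)\le D_\Omega+o(1)$, because $x_a\to x_0\in\Gamma\cap\overline\Omega$ and $d_\Omega$ is continuous, we get $d_\Omega(x_{a_k})\to D_\Omega$. Then $d_\Omega(x_0)=D_\Omega$, so $x_0\in\mathcal M_\Omega$. Finally, $\operatorname{dist}(x_{a_k},\mathcal M_\Omega)\to0$ follows by applying the same argument to every subsequence, using compactness of $\Gamma\cap\overline\Omega$ and continuity of $d_\Omega$.

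\textbf{Upper bound.} Fix $y\in\mathcal M_\Omega$ and a whole-space minimizer $w_a$ concentrated at $y$. By the rotational symmetry of $V$, such a minimizer exists concentrated at any point of $\Gamma$, after rotating a given one. Set $u=\varphi w_a/\|\varphi w_a\|_{L^2}$, where $\varphi$ is a cutoff equal to $1$ on $B_{D_\Omega-\eta}(y)\subset\Omega$ and supported in $B_{D_\Omega-\eta/2}(y)$. Using the exponential decay of $w_a$ in the blow-up variables (from the Euler--Lagrange equation with multiplier $\mu_a\sim-\delta_a^{-2}$ and a comparison argument), the errors in kinetic, potential and quartic terms and in the normalization are $O(e^{-2(D_\Omega-\eta)/\delta_a})$. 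Hence
\[
e_\Omega(a)\le E_a^\infty(u)\le e_\infty(a)+Ce^{-2(D_\Omega-\eta)/\delta_a}.
\]

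\textbf{Lower bound.} This is the main obstacle. Extend $u_a$ by zero, so $u_a\in H^1(\mathbb R^2)$ and $e_\Omega(a)=E_a^\infty(u_a)$. What is needed is a quantitative coercivity statement: for $u$ near the manifold $\mathcal G_a$ of whole-space minimizers (a circle's worth of rotations),
\[
E_a^\infty(u)-e_\infty(a)\ge c\,\delta_a^{\,m}\,\dist_{H^1}(u,\mathcal G_a)^2
\]
for some fixed power $m$, uniformly as $a\nearrow a^*$. I would prove this by rescaling $w(x)=\delta_a u(x_a+\delta_a x)$ and writing $w$ near $Q/\sqrt{a^*}$ as a modulated profile plus a remainder orthogonal to the kernel directions. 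The kernel directions are tangent translations, the radial/dilation direction and the mass direction. The nondegeneracy of the linearized operator of $Q$ gives quadratic growth on the orthogonal complement. A Lyapunov--Schmidt reduction handles the finite-dimensional directions. The radial position is pinned by the potential, which is quadratic in $|x|-1$. Only the tangential rotation is a genuine symmetry, and it belongs to $\mathcal G_a$. This is the step I expect to be the heaviest.

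It remains to bound the distance from below. Because $u_a$ vanishes on $\mathbb R^2\setminus\Omega$, and in particular on a ball of size $O(1)$ at distance about $d_\Omega(x_a)$ from $x_a$, every element of $\mathcal G_a$ has $L^2$ mass on that ball of order at least $e^{-2(d_\Omega(x_a)+\eta)/\delta_a}$. This uses lower exponential decay bounds $w\gtrsim e^{-(1+\eta)|x|}$ for whole-space minimizers, again from the equation. Therefore $\dist(u_a,\mathcal G_a)^2\ge e^{-2(d_\Omega(x_a)+\eta)/\delta_a}$. The polynomial factors in $\delta_a$ are absorbed into $\eta$, which yields the lower bound and completes the comparison.
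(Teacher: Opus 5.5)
Your proposal is correct and follows essentially the same route as the paper: an upper bound from cutting off a rotated whole-space minimizer centred at a point of $\mathcal M_\Omega$, and a lower bound from a uniform quadratic-growth estimate for $E_a^\infty$ near $\mathcal G_a$ in the blow-up norm $\|\cdot\|_{*,a}$, combined with the $L^2$ mass that a nearby ground state carries on $\Omega^c$, estimated through a lower exponential bound (the quadratic growth comes from the nondegeneracy of $Q$ plus a Lyapunov--Schmidt reduction, with rotation as the only exact symmetry). The one slip is your claim that every element of $\mathcal G_a$ has that much mass on the exterior ball: this holds only for the rotations close to $u_a$, which concentrate within $o(\delta_a)$ of $x_a$ (the paper checks this for a near-minimizing $U_a^*$), while the remaining rotations lie at distance of order one from $u_a$ and so do not affect the infimum.
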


\begin{remark}\label{rem:selection-mechanism}\rm
Theorems~\ref{thm:main-1}-\ref{thm:main} show that when the trapping potential admits a
continuum of possible concentration points on \(\Gamma\), the boundary
geometry determines those having maximal distance from \(\partial\Omega\).
A closely related phenomenon is classical for singularly perturbed semilinear Dirichlet problems.  Ni and Wei
\cite{NiWei1995} proved that the spike of a least-energy solution approaches a point of
maximal distance from the boundary.  Further results involving local maxima
of the distance function were obtained in
\cite{Wei1996,LiNirenberg1998}.  Theorem~\ref{thm:main} exhibits the same
geometric quantity in the mass-critical constrained minimization
problem with a ring-shaped potential in bounded domains.
\end{remark}

\begin{remark}\label{rem:method}\rm
Inspired by \cite{NiWei1995,Wei1996,LiNirenberg1998},
our proof of Theorem~\ref{thm:main} is based on a direct comparison between the bounded domain minimum
\(e_\Omega(a)\) and the corresponding whole space minimum \(e_\infty(a)\).
More precisely, we establish exponential upper and lower bounds for
$e_\Omega(a)-e_\infty(a)$.
The upper bound is obtained by cutting off a suitably rotated whole space minimizer.  The lower bound requires a quantitative comparison of a
bounded domain minimizer with the manifold $\mathcal G_a$ of whole space minimizers.
For this purpose we prove in Section \ref{sec:Nondegeneracy} a uniform quadratic-growth estimate for
the whole space energy near that manifold.  Its proof uses the nondegeneracy of
the optimizer \(Q\)  and a
finite-dimensional Lyapunov-Schmidt reduction.  The use of the translation
and scaling modes is related to local uniqueness arguments for
attractive Bose-Einstein condensates \cite{GuoLinWei2017}, while the rotational mode reflects the
ring degeneracy treated in \cite{PengYangZeng2024}.
\end{remark}

As a direct application of Theorem~\ref{thm:main}, we present two explicit domains for which the set \(\mathcal{M}_\Omega\) consists of a single point. In both cases, \((1,0)\) is the unique point of \(\Gamma\cap\overline{\Omega}\) that maximizes the distance to the boundary, so the concentration point is uniquely determined.
\begin{corollary}
Let
\[
\Omega=B_2((1,0))\ \text{or}\ \Omega=B_2((1/2,0)),
\]
then the mass of all minimizers for \(e_\Omega(a)\) concentrates at \((1,0)\) as
\(a\nearrow a^*\).
\end{corollary}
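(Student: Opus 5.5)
The corollary follows from Theorem~\ref{thm:main} once we check, for both domains, that $\Gamma\cap\Omega\neq\emptyset$ and that $\mathcal M_\Omega=\{(1,0)\}$. Then $\operatorname{dist}(x_{a_k},\mathcal M_\Omega)\to0$ forces $x_{a_k}\to(1,0)$.

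\textbf{Reducing to a distance computation.} Since $\Omega=B_2(c)$ is a disk with $c=(c_1,0)$, we have $d_\Omega(x)=2-|x-c|$ for $x\in\overline\Omega$. Maximizing $d_\Omega$ over $\Gamma\cap\overline\Omega$ is therefore the same as minimizing $|x-c|$ over $x\in\Gamma\cap\overline\Omega$. For $x=(\cos\theta,\sin\theta)$,
\[
|x-c|^2=1-2c_1\cos\theta+c_1^2 .
\]
When $c_1>0$ this is strictly minimized exactly at $\theta=0$, that is, at $x=(1,0)$.

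\textbf{The two domains.} For $c=(1,0)$ we get $|(1,0)-c|=0$, so $d_\Omega((1,0))=2$ and $(1,0)\in\Omega$. Here $\Gamma\cap\partial\Omega$ consists of the points with $2-2\cos\theta=4$, i.e. $(-1,0)$, so this is a genuinely mixed geometry. For $c=(1/2,0)$ we get $|(1,0)-c|=1/2<2$, so $(1,0)\in\Omega$ again. Moreover $|x-c|^2=5/4-\cos\theta\le 9/4<4$ for every $\theta$, so all of $\Gamma$ lies in $\Omega$.

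In both cases $\Gamma\cap\Omega\neq\emptyset$, and the strict minimization of $|x-c|$ gives $\mathcal M_\Omega=\{(1,0)\}$. The disks are bounded, connected and have $C^2$ (indeed smooth) boundary, so the hypotheses of Theorem~\ref{thm:main-1} hold.

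\textbf{From subsequences to all minimizers.} Theorem~\ref{thm:main} is stated along subsequences, so I would upgrade it with a standard argument. Suppose some sequence $a_k\nearrow a^*$ and some nonnegative minimizers $u_{a_k}$ had maximum points with $\limsup_k|x_{a_k}-(1,0)|>0$. Pass to a subsequence on which this distance stays bounded below. Theorem~\ref{thm:main-1}(ii), applied to that subsequence, gives a further subsequence with unique maximum points, $H^1$ convergence of the rescaled profile to $Q/\sqrt{a^*}$, and $x_{a_k}\to x_0\in\mathcal M_\Omega=\{(1,0)\}$. This contradicts the lower bound on the distance.

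Hence every family of minimizers concentrates at $(1,0)$ in the sense of Theorem~\ref{thm:main-1}(ii). There is no substantial obstacle; the only point needing care is this subsequence-to-full-family step.
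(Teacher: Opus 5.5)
Your proposal is correct and follows the paper's route: the paper also just notes that $\Gamma\cap\Omega\neq\emptyset$ and $\mathcal M_\Omega=\{(1,0)\}$ for both disks and then invokes Theorem~\ref{thm:main}. Your computation $|x-c|^2=1-2c_1\cos\theta+c_1^2$ and your subsequence-of-a-subsequence argument make explicit what the paper leaves implicit, namely the passage from the subsequential statement of the theorem to all (nonnegative) minimizers.
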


The remainder of the paper is organized as follows. In Section~\ref{sec:Blow-up}, we establish the blow-up profile and location for the bounded domain problem, and we also derive some useful estimates for the whole space problem. These results yield Theorem~\ref{thm:main-1}. Section~\ref{sec:Nondegeneracy} is devoted to investigating the constrained nondegeneracy of the limiting profile and the uniform quadratic-growth estimate for the whole space energy near the ground state manifold. In Section~\ref{sec:boundary-correction}, we derive exponential upper and lower bounds for the difference between the bounded domain and whole space minimal energies. In Section \ref{sec:Proof-main}, we combine these estimates to prove Theorem~\ref{thm:main}.

\section{Blow-up analysis}\label{sec:Blow-up}

This section provides the blow-up results used throughout the paper.  First,
we analyze minimizers in the bounded domain, including the scale, the limiting profile, and the location of their maximum points near the
ring.  Then we study the corresponding whole space problem and derive uniform
pointwise decay and lower exponential estimates.
The two problems have the same blow-up scale, which is essential for the energy comparison in Section \ref{sec:boundary-correction}.
Before presenting our argument,
 we remark that the notations $c$ and $C$ in the context denote the positive constants,
which might be changed from line to line and even in the same line.
\begin{lemma}\label{lem:exist}
For $0\le a<a^*$, $e_\Om(a)$ admits a minimizer.  Moreover,
\begin{equation*}
e_\Om(a)
\le
\frac{2\lambda_0^2}{a^*}(a^*-a)^{1/2}
+
o\bigl((a^*-a)^{1/2}\bigr).
\end{equation*}
In addition, $e_\Om(a^*)=0$ is not attained, while $e_\Om(a)=-\infty$ for
$a>a^*$.
\end{lemma}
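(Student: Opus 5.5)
Existence for $0\le a<a^*$ follows from the direct method. For $u\in H_0^1(\Om)$ with $\|u\|_{L^2}=1$ we extend $u$ by zero to $\mathbb R^2$ and apply \eqref{eqn:GN}. This gives
\[
E_a^\Om(u)\ge\Bigl(1-\frac a{a^*}\Bigr)\int_\Om|\nabla u|^2+\int_\Om V|u|^2 .
\]
Hence $e_\Om(a)\ge0$, and any minimizing sequence is bounded in $H_0^1(\Om)$. Because $\Om$ is bounded, $H_0^1(\Om)\hookrightarrow L^2(\Om)$ and $H_0^1(\Om)\hookrightarrow L^4(\Om)$ are compact (Rellich). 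So along a subsequence $u_n\rightharpoonup u$ in $H_0^1$, $u_n\to u$ in $L^2$ and in $L^4$. The constraint passes to the limit, the $V$-term converges since $V$ is bounded on $\overline\Om$, and the gradient term is weakly lower semicontinuous. Thus $u$ is a minimizer. Replacing $u$ by $|u|$ does not change the energy, so a nonnegative minimizer exists.

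For the upper bound we build a trial function around a point $x_1\in\Gamma\cap\Om$, which exists by assumption. Fix $r>0$ with $B_{2r}(x_1)\subset\Om$ and a cutoff $\varphi\in C_c^\infty(B_{2r}(x_1))$ with $\varphi\equiv1$ on $B_r(x_1)$. Set
\[
u_\tau(x)=A_\tau\frac{\tau}{\|Q\|_2}\,\varphi(x)\,Q\bigl(\tau(x-x_1)\bigr),
\]
with $A_\tau$ chosen so that $\|u_\tau\|_{L^2}=1$. Since $Q$ decays exponentially, $A_\tau=1+O(e^{-c\tau})$, and all cutoff errors are $O(e^{-c\tau})$. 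By the Pohozaev identities $\|\nabla Q\|_2^2=\tfrac12\|Q\|_4^4=\|Q\|_2^2$, we get
\[
\int|\nabla u_\tau|^2-\frac a2\int u_\tau^4=\frac{a^*-a}{a^*}\,\tau^2+O(e^{-c\tau}).
\]
For the potential term, $V(x)=(|x|-1)^2$ satisfies $V(x_1+y)=(y\cdot x_1)^2+O(|y|^3)$ near $x_1$. Using the radial symmetry of $Q$,
\[
\int Vu_\tau^2=\frac{1}{a^*\tau^2}\int (y\cdot x_1)^2Q^2\,dy+O(\tau^{-3})=\frac{\lambda_0^4}{a^*\tau^2}+O(\tau^{-3}),
\]
because $\int y_1^2Q^2=\tfrac12\int|y|^2Q^2=\lambda_0^4$. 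Choosing $\tau=\lambda_0(a^*-a)^{-1/4}$, i.e. $\tau=\da^{-1}$, makes the two leading terms equal. This gives
\[
e_\Om(a)\le\frac{2\lambda_0^2}{a^*}(a^*-a)^{1/2}+O\bigl((a^*-a)^{3/4}\bigr),
\]
which is the claimed bound. The main point needing care is the anisotropic Taylor expansion of $V$ along the ring; the cubic remainder and the cutoff error are both lower order.

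For $a=a^*$, the lower bound above gives $e_\Om(a^*)\ge0$, and letting $a\nearrow a^*$ in the upper bound gives $e_\Om(a^*)\le0$, so $e_\Om(a^*)=0$. Suppose a minimizer $u$ existed. Then the lower bound forces $\int V u^2=0$ and equality in \eqref{eqn:GN}. The first condition is impossible because $V>0$ a.e. and $u\not\equiv0$. (Alternatively, equality forces $u$ to be a translate and dilate of $Q$, which is not compactly supported.) So $e_\Om(a^*)$ is not attained.

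For $a>a^*$, we use the same trial functions $u_\tau$ and let $\tau\to\infty$:
\[
E_a^\Om(u_\tau)=\frac{a^*-a}{a^*}\tau^2+O(1)\to-\infty,
\]
so $e_\Om(a)=-\infty$.
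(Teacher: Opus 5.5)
Your proposal is correct. It is exactly the standard argument the paper relies on: the paper omits this proof and defers to the earlier works it cites, which use the Gagliardo--Nirenberg lower bound with Rellich compactness on the bounded domain, a cut-off rescaled $Q$ centred at a point of $\Gamma\cap\Omega$ with $\tau=\delta_a^{-1}$, non-attainment at $a^*$ because $\int_\Omega Vu^2=0$ would force $u\equiv0$, and the same trial functions with $\tau\to\infty$ for $a>a^*$. The only tacit step is $e_\Omega(a^*)\le 0$, which uses that $a\mapsto e_\Omega(a)$ is nonincreasing; alternatively it follows directly from $E_{a^*}^\Omega(u_\tau)\to0$ as $\tau\to\infty$.
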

\begin{proof}
Since $\Gm\cap\Om\neq\emptyset$, the proof is similar to \cite{GuoSeiringer2014,GuoZengZhou2016,LuoZhu2019}, so we omit the details.
\end{proof}
\begin{lemma}\label{lem:blowup}
Let $u_a$ be a nonnegative minimizer and set
\begin{equation*}
\varepsilon_a:=
\left(\int_\Om|\nabla u_a(x)|^2\mathrm dx\right)^{-1/2}.
\end{equation*}
Then $\ea\to0$ as $a\nearrow a^*$. If $x_a$ is a maximum point of $u_a$ and
\begin{equation*}
w_a(x)
:=
\varepsilon_au_a(x_a+\varepsilon_ax)
\end{equation*}
in
\[
\Om_a:=\{x\in\mathbb R^2:x_a+\varepsilon_ax\in\Om\},
\]
extended by zero to $\mathbb R^2\setminus\Om_a$, then
for any sequence $\{a_k\}$ satisfying
 \(a_k\nearrow a^*\) as $k\to\infty$, there exists a subsequence, still
denoted by $\{a_k\}$, such that
\begin{equation*}
\frac{d_\Om(x_{a_k})}{\varepsilon_{a_k}}\to\infty,
\end{equation*}
\begin{equation}\label{eqn:conver-H1}
w_{a_k}\to q:=\frac{Q}{\sqrt{a^*}}
\ \hbox{strongly in }H^1(\mathbb R^2),
\end{equation}
and $x_{a_k}$ is the unique maximum point of $u_{a_k}$ satisfying
\begin{equation*}
\dist(x_{a_k},\Gm)\to0
\end{equation*}
as $k\to\infty$.
\end{lemma}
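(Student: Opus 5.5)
We follow the standard blow-up scheme of Guo--Seiringer and Guo--Zeng--Zhou, adapted to the Dirichlet problem as in Luo--Zhu. The argument runs from an energy squeeze to a vanishing/compactness dichotomy for the rescaled functions.

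\emph{Step 1: $\varepsilon_a\to0$ and the basic energy identities.} By Lemma~\ref{lem:exist}, $e_\Om(a)\to0$ as $a\nearrow a^*$. Extending $u_a$ by zero and using \eqref{eqn:GN}, we get
\[
E_a^\Om(u_a)\ge \Bigl(1-\frac a{a^*}\Bigr)\int_\Om|\nabla u_a|^2+\int_\Om V u_a^2 .
\]
Hence $\int_\Om Vu_a^2\to0$. If $\int|\nabla u_a|^2$ stayed bounded along a sequence, we could pass to a weak limit $u_0\in H_0^1(\Om)$; since $\Om$ is bounded the embedding is compact, so $\|u_0\|_2=1$. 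Then $e_\Om(a^*)=0$ would be attained by $u_0$, contradicting Lemma~\ref{lem:exist}. Therefore $\varepsilon_a\to0$, and from the squeeze
\[
\int|\nabla u_a|^2-\frac a2\int u_a^4\to0 \quad\text{(after multiplying by } \varepsilon_a^2\text{)}
\]
we obtain $\int w_a^2=1$, $\int|\nabla w_a|^2=1$, and $\int w_a^4\to 2/a^*$.

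\emph{Step 2: nonvanishing and the location of $x_a$.} The Euler--Lagrange equation reads $-\Delta u_a+Vu_a=\mu_au_a+au_a^3$ with $\mu_a=e_\Om(a)-\frac a2\int u_a^4$, so $\varepsilon_a^2\mu_a\to-1$. Thus $w_a$ satisfies
\[
-\Delta w_a+\varepsilon_a^2V(x_a+\varepsilon_ax)w_a=\varepsilon_a^2\mu_aw_a+aw_a^3 \quad\text{in }\Om_a .
\]
Since $\int w_a^4$ is bounded below, Lions' lemma gives $y_a$ with $\int_{B_2(y_a)}w_a^2\ge\eta>0$. By De Giorgi--Nash--Moser (the subsolution inequality $-\Delta w_a\le c\,w_a^3$, with $w_a$ extended by zero, which is fine for a nonnegative subsolution vanishing on the boundary), $\sup_{B_1(y_a)}w_a\ge c$. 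At a maximum point inside $\Om$, $-\Delta w_a\ge0$ gives $w_a(0)^2\ge -\varepsilon_a^2\mu_a/a\ge c>0$, so we may take $y_a$ bounded. Next, since $\int V u_a^2\to0$ and mass $\eta$ stays near $x_a$ at scale $\varepsilon_a$, we get $\dist(x_a,\Gm)\to0$.

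\emph{Step 3: distance to the boundary (the main obstacle).} Suppose $d_\Om(x_a)/\varepsilon_a\to \ell<\infty$ along a subsequence. Then $\Om_a$ converges to a half-plane $H$, and $w_a\rightharpoonup w_0\not\equiv0$ weakly in $H^1$. The limit $w_0\in H^1_0(H)$ satisfies $-\Delta w_0+w_0=a^*w_0^3$ with $\|w_0\|_2\le1$. Combining $\int|\nabla w_0|^2=\frac{a^*}{2}\int w_0^4$ (a Pohozaev-type identity, valid here only as an inequality from the energy) with \eqref{eqn:GN} forces $\|w_0\|_2^2\ge1$, hence equality. Equality makes $w_0$ an optimizer of \eqref{eqn:GN}, hence positive everywhere, which is impossible for a function vanishing on $\pa H$. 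The care needed is in obtaining the identity from the minimizing property, namely $\int|\nabla w_a|^2-\frac a2\int w_a^4\to0$ together with Brezis--Lieb splitting, rather than from Pohozaev on the half-space. We use the splitting argument: if $\|w_0\|_2^2=m<1$, the GN inequality applied separately to $w_0$ and to $w_a-w_0$ gives $\liminf(\int|\nabla w_a|^2-\frac{a^*}2\int w_a^4)\ge0$, with equality forcing both pieces to be GN optimizers. This is impossible for the remainder unless it vanishes, so $m=1$, and the positivity contradiction above applies. Hence $d_\Om(x_a)/\varepsilon_a\to\infty$.

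\emph{Step 4: strong convergence and uniqueness of the maximum.} Now $\Om_a\to\mathbb R^2$, and the same splitting shows $w_a\to w_0$ strongly in $L^2$, hence in $H^1$ by the norm identities. The limit $w_0$ is a GN optimizer with $\|w_0\|_2=\|\nabla w_0\|_2=1$, which gives $w_0=Q(\cdot-z)/\sqrt{a^*}$. Since $0$ is a maximum point of $w_a$ and the convergence upgrades to $C^2_{loc}$ by elliptic estimates, $z=0$. Uniqueness of the maximum point follows as in Guo--Seiringer (and Ni--Takagi): $Q$ is radially decreasing with $Q''(0)<0$, so near $0$ the functions $w_a$ are strictly concave with a unique critical point. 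Away from $0$, the uniform exponential decay of $w_a$ (from the comparison principle applied to the equation with $-\varepsilon_a^2\mu_a\to1$) rules out other maxima, since any maximum must satisfy $w_a\ge c$.
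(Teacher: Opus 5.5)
Your proposal is correct and is essentially the standard blow-up argument that the paper invokes by citing Deng--Guo--Xu: energy squeeze, nonvanishing at the maximum point, identification of the limit as a Gagliardo--Nirenberg optimizer, then $C^2_{\rm loc}$ convergence and uniqueness of the maximum. Your Brezis--Lieb splitting, which shows the weak limit is a GN optimizer and hence everywhere positive, is a clean way to rule out the half-plane limit and get strong $H^1$ convergence at once, without needing a nonexistence result for the Dirichlet problem on a half-plane. One thing needs fixing in Step 2: De Giorgi--Nash--Moser must be applied in the other direction, at the maximum point, $c\le w_a(0)\le C\|w_a\|_{L^2(B_2(0))}$. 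That inequality is what gives nonvanishing at the origin. Combined with strong $L^2$ convergence, it also gives the uniform smallness of $w_a$ at infinity that the comparison argument in Step 4 needs.
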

\begin{proof}
The proof is similar to \cite[Lemma 3.3]{DengGuoXu2021}, so we also omit the details.
\end{proof}
\begin{lemma}\label{lem:scale}
Define
\begin{equation*}
r_a:=\frac{|x_a|-1}{\ea}.
\end{equation*}
For the sequence $\{a_k\}$ obtained in
Lemma {\rm\ref{lem:blowup}}, there exists a subsequence, still
denoted by $\{a_k\}$, such that
 $\{r_{a_k}\}$ is uniformly bounded,
\begin{equation*}
r_{a_k}\to0,
\end{equation*}
\begin{equation*}
\frac{\varepsilon_{a_k}}{\delta_{a_k}}\to1,
\end{equation*}
and
\begin{equation*}
e_\Omega(a_k)
=
\frac{2\lambda_0^2}{a^*}(a^*-a_k)^{1/2}
+
o\bigl((a^*-a_k)^{1/2}\bigr)
\end{equation*}
as $k\to\infty$.
In particular, $x_{a_k}\to x_0\in\Gamma$ as $k\to\infty$.
\end{lemma}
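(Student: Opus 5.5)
The plan is to bound $E^\Om_a(u_a)$ from below by splitting it into a Gagliardo--Nirenberg part and a potential part, compare with the upper bound of Lemma~\ref{lem:exist}, and read off all four claims from the resulting inequality.

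\textbf{Lower bound.} Since $u_a\in H_0^1(\Om)$, extending by zero and applying \eqref{eqn:GN} gives
\[
e_\Om(a)=E_a^\Om(u_a)\ge \frac{a^*-a}{a^*}\int_\Om|\nabla u_a|^2\,\mathrm dx+\int_\Om V|u_a|^2\,\mathrm dx
=\frac{a^*-a}{a^*}\,\ea^{-2}+\int_\Om V|u_a|^2\,\mathrm dx .
\]

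\textbf{The potential term.} Change variables $x=x_a+\ea y$. Since $\|u_a\|_{L^2(\Om)}=1$,
\[
\int_\Om V|u_a|^2\,\mathrm dx=\int_{\mathbb R^2}\bigl(|x_a+\ea y|-1\bigr)^2w_a^2(y)\,\mathrm dy .
\]
Write $x_a=(1+\ea r_a)\theta_a$ with $|\theta_a|=1$. For bounded $y$,
\[
|x_a+\ea y|-1=\ea\bigl(r_a+\theta_a\cdot y\bigr)+O(\ea^2|y|^2).
\]
Pass to a subsequence with $\theta_a\to\theta_0$. Fatou's lemma together with \eqref{eqn:conver-H1} then gives
\[
\liminf\,\ea^{-2}\int_\Om V|u_a|^2\,\mathrm dx\ \ge\ \int_{\mathbb R^2}(r_0+\theta_0\cdot y)^2q^2\,\mathrm dy .
\]
Here $r_0$ is a finite limit of $r_a$ if one exists. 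If instead $|r_a|\to\infty$ along a subsequence, the left side tends to $+\infty$.

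\textbf{Boundedness of $r_a$.} By radial symmetry of $Q$, the cross term vanishes, so
\[
\int_{\mathbb R^2}(r_0+\theta_0\cdot y)^2q^2\,\mathrm dy=r_0^2+\frac1{2a^*}\int_{\mathbb R^2}|y|^2Q^2\,\mathrm dy=r_0^2+\frac{\lambda_0^4}{a^*}.
\]
Suppose $|r_a|\to\infty$. Minimizing $\frac{a^*-a}{a^*}\ea^{-2}+M\ea^2$ over $\ea$ gives a lower bound of order $2\sqrt{M}\,(a^*-a)^{1/2}/\sqrt{a^*}$ with $M\to\infty$. This contradicts the upper bound of Lemma~\ref{lem:exist}. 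Hence $r_a$ stays bounded along a subsequence, and we may take $r_a\to r_0$.

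\textbf{Sharp energy.} With $r_0$ finite, set $t=\ea^{-2}$. Minimizing
\[
\frac{a^*-a}{a^*}\,t+\frac{r_0^2+\lambda_0^4/a^*}{t}
\]
over $t>0$ gives the lower bound
\[
e_\Om(a)\ \ge\ \frac{2}{\sqrt{a^*}}\sqrt{r_0^2+\lambda_0^4/a^*}\,(a^*-a)^{1/2}\,(1+o(1)).
\]
Comparing with the upper bound $\frac{2\lambda_0^2}{a^*}(a^*-a)^{1/2}$ from Lemma~\ref{lem:exist} forces $r_0=0$. Since $r_0$ was an arbitrary subsequential limit, $r_{a_k}\to0$ along the whole subsequence. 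The upper and lower bounds now match, so the energy asymptotics hold.

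\textbf{Scale $\varepsilon_a/\delta_a\to1$.} Equality in the asymptotic minimization forces $t$ to be asymptotically the minimizer $t=\lambda_0^2(a^*-a)^{-1/2}$. That minimizer is exactly $\delta_a^{-2}$, so $\ea/\da\to1$.

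\textbf{Location.} Finally, $|x_{a_k}|-1=\ea r_a\to0$ and $\Om$ is bounded. Hence a further subsequence gives $x_{a_k}\to x_0$ with $|x_0|=1$, i.e.\ $x_0\in\Gm$.

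\textbf{Main obstacle.} The delicate step is the Fatou lower bound for the potential term. It needs the pointwise Taylor expansion together with $H^1$ (hence a.e., along subsequences) convergence of $w_a$. It also needs the separate treatment of the case $|r_a|\to\infty$: in that case one restricts to a fixed ball where $w_a$ has mass bounded below, and there $(|x_a+\ea y|-1)^2\ge c\,\ea^2r_a^2$.
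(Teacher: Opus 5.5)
Your proposal is correct and follows essentially the same route as the paper. Both bound $e_\Omega(a)$ from below by the Gagliardo--Nirenberg deficit plus $\int_\Omega V u_a^2$, obtain $\liminf \varepsilon_a^{-2}\int_\Omega V u_a^2 \ge r_0^2+\lambda_0^4/a^*$ (or $+\infty$ when $|r_a|\to\infty$) from the blow-up limit, and compare with the upper bound of Lemma~\ref{lem:exist} through the one-variable optimization in the scale. The differences are only streamlinings: you get the kinetic term $\frac{a^*-a}{a^*}\varepsilon_a^{-2}$ exactly from \eqref{eqn:GN} rather than via $\varepsilon_a^2\int_\Omega u_a^4\to 2/a^*$, and you rule out $|r_a|\to\infty$ directly by AM--GM instead of first proving $\varepsilon_a^2\asymp(a^*-a)^{1/2}$ as the paper does.
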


\begin{proof} Define $P_{a_k}=\int_\Om V(x)u_{a_k}^2(x)\mathrm dx$.
We first prove that there exists a constant $c>0$ such that
\begin{equation}\label{P-a-low-bound}
P_{a_k}\ge c\varepsilon_{a_k}^2.
\end{equation}
If $|r_{a_k}|\to\infty$ as $k\to\infty$, then for every fixed $R>0$,
\[
\bigl||x_{a_k}+\varepsilon_{a_k} x|-1\bigr|
\ge \bigl||x_{a_k}|-1\bigr|-\bigl||x_{a_k}+\varepsilon_{a_k} x|-|x_{a_k}|\bigr|\ge \varepsilon_{a_k}|r_{a_k}|-\varepsilon_{a_k}|x|
\ge\varepsilon_{a_k}(|r_{a_k}|-R),
\  |x|\le R.
\]
Thus,
\begin{equation}\label{P-a-e-a-2-local}
\frac{P_{a_k}}{\varepsilon_{a_k}^2}
\ge
(|r_{a_k}|-R)^2\int_{B_R(0)}w_{a_k}^2(x)\mathrm dx\to\infty
\end{equation}
and \eqref{P-a-low-bound} holds.
If  $r_{a_k}$ is uniformly bounded, then up to a subsequence, $r_{a_k}\to r_0$ and
$x_{a_k}/|x_{a_k}|\to e_0\in \mathbb S^1$ as $k\to\infty$.  For $x\in B_R(0):=\{x\in\mathbb R^2: |x|<R\}$,
\[
\frac{|x_{a_k}+\varepsilon_{a_k} x|-1}{\varepsilon_{a_k}}
=
r_{a_k}+\frac{x_{a_k}}{|x_{a_k}|}\cdot x+O(\varepsilon_{{a_k}}|x|^2).
\]
Hence, by \eqref{eqn:conver-H1}, we conclude that
\begin{equation}\label{eqn:P-a-est}
\begin{aligned}
\liminf_{k\to\infty}\frac{P_{a_k}}{\varepsilon_{a_k}^2}
&\ge\liminf_{k\to\infty} \int_{|x|\leq R}\left(\frac{|x_{a_k}+\varepsilon_{a_k} x|-1}{\varepsilon_{a_k}}\right)^2w_{a_k}^2(x)\mathrm dx
\\
&=
\int_{B_R(0)}(r_0+e_0\cdot x)^2q^2(x)\mathrm dx.
\end{aligned}
\end{equation}
Letting $R\to\infty$, we have
\begin{equation}\label{P-a-low-bound-fine}
\liminf_{k\to\infty}\frac{P_{a_k}}{\varepsilon_{a_k}^2}\ge r_0^2+\frac{\lambda_0^4}{a^*}>0.
\end{equation}
This proves $P_{a_k}\ge c\varepsilon_{a_k}^2$, that is, \eqref{P-a-low-bound} holds.

Since
\[
\varepsilon_{a_k}^2\int_\Om u_{a_k}^4(x)\,\mathrm dx\to\frac{2}{a^*}
\]
as $k\to\infty$,
there is \(c_1>0\) such that
for all sufficiently large \(k\),
\[
\frac{a^*-{a_k}}{2}\int_\Om u_{a_k}^4(x)\,\mathrm dx
\ge
c_1\frac{a^*-{a_k}}{\varepsilon_{a_k}^2}.
\]
Combining \eqref{P-a-low-bound} and
Lemma~\ref{lem:exist}, there are constants \(c_1,c_2>0\) such that
\[
c_1\frac{a^*-{a_k}}{\varepsilon_{a_k}^2}+c_2\varepsilon_{a_k}^2
\le
C(a^*-{a_k})^{1/2},
\]
which indicates that
\begin{equation*}
c_1(a^*-{a_k})^{1/2}
\le
\varepsilon_{a_k}^2
\le
c_2(a^*-a_k)^{1/2}.
\end{equation*}
Since $P_{a_k}\le C(a^*-a_k)^{1/2}$, one has
$P_{a_k}/\varepsilon_{a_k}^2\le C$, and the preceding local lower bound \eqref{P-a-e-a-2-local}
implies that $\{r_{a_k}\}$
 is uniformly bounded.
Take any subsequence of $\{a_k\}$ such that
\[
r_{a_k}\to r_0,\
t_{a_k}:=\frac{\varepsilon_{a_k}^2}{(a^*-{a_k})^{1/2}}\to t_0\in(0,\infty).
\]
By \eqref{P-a-low-bound-fine}, we infer that
\begin{equation*}
\liminf_{k\to\infty}
\frac{e_\Om({a_k})}{(a^*-{a_k})^{1/2}}
\ge
\frac1{a^*t_0}
+
t_0\left(
r_0^2+\frac{\lambda_0^4}{a^*}
\right).
\end{equation*}
On the other hand,
 Lemma~\ref{lem:exist} gives
\[
\limsup_{k\to\infty}
\frac{e_\Om({a_k})}{(a^*-{a_k})^{1/2}}
\le
\frac{2\lambda_0^2}{a^*}.
\]
Since
\[
\frac1{a^*t}+\frac{\lambda_0^4}{a^*}t
\ge
\frac{2\lambda_0^2}{a^*}
\]
with equality only at $t=\lambda_0^{-2}$, we obtain
$r_0=0$ and $t_0=\lambda_0^{-2}$.
This completes the proof.
\end{proof}

 Next, we turn to the corresponding whole space problem.
Define
\begin{equation*}
e_\infty(a)
:=
\inf_{\substack{u\in H_V^1(\mathbb R^2)\\ \|u\|_{L^2(\mathbb R^2)}=1}}
E_a^\infty(u),
\end{equation*}
where
\[
E_a^\infty(u)
=
\int_{\mathbb R^2}\bigl(|\nabla u(x)|^2+V(x)u^2(x)\bigr)\mathrm dx
-\frac a2\int_{\mathbb R^2}u^4(x)\mathrm dx.
\]
Let $\mathcal G_a$
denote the set of all ground states of the
whole space problem, namely, all positive minimizers of
$e_\infty(a)$. For \(a\) sufficiently close to \(a^*\), the minimizer is unique up to
rotations around the origin, see \cite{PengYangZeng2024,zeng2025}.
For $U_a\in \mathcal G_a$,
the corresponding Euler-Lagrange equation is
\[
-\Delta U_a+VU_a
=
\mu_a^\infty U_a+aU_a^3
\ \text{in }\mathbb R^2,
\]
where $\mu_a^\infty $ is the Lagrange multiplier.

\begin{lemma}\label{lem:whole}
As \(a\nearrow a^*\),
\[
e_\infty(a)
=
\frac{2\lambda_0^2}{a^*}(a^*-a)^{1/2}
+
o\bigl((a^*-a)^{1/2}\bigr).
\]
For any sequence $\{a_k\}$ satisfying
 \(a_k\nearrow a^*\) as $k\to\infty$, there exists a subsequence, still
denoted by $\{a_k\}$, such that \(U_{a_k}\) has a unique  maximum
point $p_{a_k}$ and as \(k\to\infty\),
\[
-\mu_{a_k}^\infty\delta_{a_k}^2\to1,
\
\frac{|p_{a_k}|-1}{\delta_{a_k}}\to0,
\
\delta_{a_k}^2
\int_{\mathbb R^2}U_{a_k}^4(x)\,\mathrm dx
\to\frac{2}{a^*},
\]
and
\[
W_{a_k}(x)
:=
\delta_{a_k}
U_{a_k}(p_{a_k}+\delta_{a_k}x)
\to q(x)
\ \text{strongly in }
H^1(\mathbb R^2)\cap C_{\rm loc}^2(\mathbb R^2).
\]
Moreover, for every \(\eta\in(0,1/4)\), there
exist constants \(R_\eta,C_\eta>0\), independent of \(k\), such that for all sufficiently
large \(k\),
\begin{equation}\label{eqn:W-A-decay}
W_{a_k}(x)
+
|\nabla W_{a_k}(x)|
+
|D^2W_{a_k}(x)|
\le
C_\eta e^{-(1-\eta)|x|},
\  |x|\ge R_\eta.
\end{equation}
In addition, for every integer \(m\ge0\),
\begin{equation}\label{eqn:weighted-L-2-bound}
\sum_{|\alpha|\le3}
\left\|
(1+|x|)^mD^\alpha W_{a_k}
\right\|_{L^2(\mathbb R^2)}
<\infty
\end{equation}
uniformly for all sufficiently large \(k\), where \(\alpha\) is a multi-index.
Finally, for every fixed \(L>0\), there exists \(c_{\eta,L}>0\),
independent of \(k\), such that
\[
W_{a_k}(x)
\ge
c_{\eta,L}e^{-(1+\eta)|x|}
\]
when
\[
R_\eta\le |x|\le \frac{L}{\delta_{a_k}}
\]
and \(k\) is sufficiently large.
\end{lemma}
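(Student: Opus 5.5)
The whole space energy asymptotics and the blow-up profile come from the same scheme as Lemmas \ref{lem:exist}--\ref{lem:scale}, with $\Omega$ replaced by $\mathbb R^2$. Compactness of minimizing sequences is supplied by the confining potential $V\to\infty$ at infinity, as in \cite{GuoSeiringer2014,GuoZengZhou2016}.

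\textbf{Step 1: energy and scale.} For the upper bound I will use the trial function $\frac{1}{\tau\|Q\|_2}Q\bigl(\frac{x-x_1}{\tau}\bigr)$ with $x_1\in\Gamma$ and $\tau=\delta_a$. Expanding $V(x_1+\tau y)=\tau^2(e_1\cdot y)^2+O(\tau^3|y|^3)$ gives
\[
e_\infty(a)\le \frac{2\lambda_0^2}{a^*}(a^*-a)^{1/2}+o\bigl((a^*-a)^{1/2}\bigr).
\]
For the lower bound I will set $\varepsilon_a:=\|\nabla U_a\|_2^{-1}$ and repeat the proof of Lemma \ref{lem:scale}:
\begin{itemize}
\item the Gagliardo--Nirenberg inequality \eqref{eqn:GN} gives $\varepsilon_a\to0$ and $w_a\to q$ in $H^1$ (this is the vanishing/dichotomy argument of \cite{GuoSeiringer2014});
\item the lower bound \eqref{P-a-low-bound-fine} follows as before;
\item optimizing in $t$ yields $r_0=0$ and $\varepsilon_a/\delta_a\to1$, hence the expansion of $e_\infty(a)$.
\end{itemize}
Because $\varepsilon_a/\delta_a\to1$ and $q$ has unit $L^2$ norm after the $\sqrt{a^*}$ normalization, I may rescale by $\delta_a$ in place of $\varepsilon_a$. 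This gives $W_{a_k}\to q$ in $H^1$ and $\delta_{a_k}^2\int U_{a_k}^4\to\|q\|_4^4=2/a^*$.

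\textbf{Step 2: the Lagrange multiplier.} Multiplying the Euler--Lagrange equation by $U_a$ gives
\[
\mu_a^\infty=e_\infty(a)-\frac a2\int U_a^4 .
\]
Multiplying by $\delta_a^2$, the first term is $o(1)$ and the second tends to $-\frac{a^*}{2}\cdot\frac{2}{a^*}=-1$, so $-\mu^\infty_{a_k}\delta_{a_k}^2\to1$.

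\textbf{Step 3: the rescaled equation and $C^2_{\rm loc}$ convergence.} $W_a$ satisfies
\[
-\Delta W_a+\delta_a^2V(p_a+\delta_a x)W_a=\delta_a^2\mu_a^\infty W_a+\tfrac{a}{a^*}\cdot a^*W_a^3 .
\]
Moser iteration gives a uniform $L^\infty$ bound, and Schauder estimates then give $C^{2,\alpha}_{\rm loc}$ bounds. The limit equation $-\Delta q+q-a^*q^3=0$ has nondegenerate, unique radial solution $q$, which gives $C^2_{\rm loc}$ convergence.

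\textbf{Step 4: uniqueness of the maximum point.} Since $q$ has a nondegenerate maximum at $0$ and is strictly radially decreasing, $C^2$ convergence near $0$ together with the smallness of $W_a$ away from $0$ gives a unique maximum point $p_{a_k}$, and this point rescales to $0$. Replacing $x_a$ by $p_a$ in the $r_a$ argument gives $(|p_{a_k}|-1)/\delta_{a_k}\to0$.

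\textbf{Step 5: exponential decay and weighted bounds.} Uniform $H^1$ convergence and elliptic estimates give $W_a\to0$ uniformly as $|x|\to\infty$, uniformly in $k$. Hence for $|x|\ge R_\eta$,
\[
-\Delta W_a+(1-\eta)^2W_a\le0,
\]
because $-\delta_a^2\mu_a^\infty\to1$, $V\ge0$ and $W_a^2$ is small. Comparison with $Ce^{-(1-\eta)|x|}$ on $\{|x|\ge R_\eta\}$, using the maximum principle on an exterior domain with the decay of $W_a$ at infinity, gives the bound on $W_a$. Local elliptic estimates on unit balls then carry the bound to $\nabla W_a$ and $D^2W_a$; any constant loss is absorbed by shrinking $\eta$. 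The weighted bound \eqref{eqn:weighted-L-2-bound} for $|\alpha|\le2$ follows from \eqref{eqn:W-A-decay}. For $|\alpha|=3$ I will differentiate the equation and use interior $W^{3,2}$ estimates on unit balls; the potential term is harmless because $\delta_a^2V(p_a+\delta_a x)\le C\delta_a^2(1+\delta_a|x|)^2$ has bounded derivatives after rescaling.

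\textbf{Step 6: the lower bound (main obstacle).} Here the potential is not small: for $|x|\le L/\delta_a$ one has
\[
\delta_a^2V(p_a+\delta_a x)\le \delta_a^2(|p_a|-1+\delta_a|x|)^2\le C\delta_a^2(1+L)^2\to0 ,
\]
so the potential term is $o(1)$ uniformly on this range. Thus
\[
-\Delta W_a+(1+\eta)^2W_a\ge0 \quad\text{on } R_\eta\le|x|\le L/\delta_a ,
\]
using $W_a\ge0$ and $-\delta_a^2\mu^\infty_a\le 1+\eta$. I will compare with the subsolution
\[
\phi=c\bigl(e^{-(1+\eta)|x|}-e^{-(1+\eta)(2L/\delta_a-|x|)}\bigr),
\]
chosen to vanish beyond the annulus, or more simply with $c\,e^{-(1+\eta)|x|}$ on the annulus, whose outer boundary value is negligible relative to the positive solution, after adjusting $\eta$. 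On the inner circle $|x|=R_\eta$, $W_a\ge\tfrac12\min_{|x|=R_\eta}q>0$ by $C^0_{\rm loc}$ convergence, so $c$ can be chosen independent of $k$. On the outer circle, positivity of $W_a$ and the choice of the subsolution handle the boundary values. The maximum principle then gives the claimed bound.
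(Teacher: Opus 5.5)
Your Steps 1--5 are fine and follow essentially the paper's route. The paper cites \cite{GuoZengZhou2016} for the energy and blow-up part, and it proves the upper decay by the same comparison with a pure exponential.

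The gap is in Step 6, exactly where you expected the difficulty: your barrier is not a subsolution. For radial $f(r)$ in $\mathbb R^2$ one has $-\Delta f=-f''-f'/r$, hence
\[
(-\Delta+\kappa^2)e^{-\lambda r}=\Bigl(\kappa^2-\lambda^2+\frac{\lambda}{r}\Bigr)e^{-\lambda r}.
\]
You take $\kappa=\lambda=1+\eta$ (operator $-\Delta+(1+\eta)^2$ and barrier exponent $1+\eta$). Then this equals $\frac{\lambda}{r}e^{-\lambda r}>0$, so the exponential is a strict \emph{super}solution. That sign is exactly what made Step 5 work, but a lower bound needs $(-\Delta+\kappa^2)\phi\le0$.

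The reflected term makes things worse. Since $(-\Delta+\lambda^2)e^{-\lambda(2L/\delta_a-r)}=-\frac{\lambda}{r}e^{-\lambda(2L/\delta_a-r)}$, your $\phi$ satisfies
\[
(-\Delta+(1+\eta)^2)\phi=\frac{c(1+\eta)}{r}\Bigl(e^{-(1+\eta)r}+e^{-(1+\eta)(2L/\delta_a-r)}\Bigr)>0
\]
on the whole annulus. Even with different exponents $\kappa<\lambda$, at $r=L/\delta_a$ the $(\kappa^2-\lambda^2)$ contributions cancel and leave $\frac{2c\lambda}{r}e^{-\lambda L/\delta_a}>0$. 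So a reflected barrier can never be a subsolution near its zero.

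Your fallback $c\,e^{-(1+\eta)|x|}$ has the same sign defect. The claim that its outer boundary value is negligible relative to $W_a$ is also circular: positivity of $W_a$ gives no quantitative lower bound on $|x|=L/\delta_a$, and the size of $W_a$ there is precisely what is being estimated. Finally, your $\phi$ vanishes on $|x|=L/\delta_a$. Even a valid comparison on $R_\eta\le|x|\le L/\delta_a$ would therefore not give the estimate up to $|x|=L/\delta_a$.

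The missing idea is to separate the two exponents and to use a constant, not reflected, correction on a doubled annulus. Since $-\mu^\infty_a\delta_a^2+\delta_a^2V(p_a+\delta_a x)\to1$ uniformly for $|x|\le 2L/\delta_a$, you get $-\Delta W_a+\kappa^2W_a\ge0$ on $R_\eta\le|x|\le 2L/\delta_a$ for any fixed $\kappa>1$. Choose $1<\kappa<\lambda<1+\eta$ and enlarge $R_\eta$ so that $\lambda^2-\kappa^2>\lambda/R_\eta$. Then $\psi=e^{-\lambda r}-e^{-2\lambda L/\delta_a}$ satisfies
\[
(-\Delta+\kappa^2)\psi=\Bigl(\kappa^2-\lambda^2+\frac{\lambda}{r}\Bigr)e^{-\lambda r}-\kappa^2e^{-2\lambda L/\delta_a}<0,
\]
and it vanishes on $|x|=2L/\delta_a$. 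By local uniform convergence to $q>0$, $W_a\ge c\psi$ on $|x|=R_\eta$ for some small $c$ independent of $k$. The maximum principle then gives $W_a\ge c\psi$ on the doubled annulus. For $|x|\le L/\delta_a$ one has $\psi\ge\frac12 e^{-\lambda|x|}\ge\frac12 e^{-(1+\eta)|x|}$, which is the claimed bound. This is the paper's argument.
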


\begin{proof}
The energy estimate and the blow-up results follow from
\cite{GuoZengZhou2016}. Moreover, we recall from \cite{GuoZengZhou2016} that $W_{a_k}(x)\to 0$ as $|x|\to\infty$ uniformly in $k$. Now, we give the proof of decay estimates.
Note that the rescaled function \(W_{a_k}\) satisfies
\begin{equation}\label{eqn:rescaled-equation}
-\Delta W_{a_k}
+
\left[
-\mu_{a_k}^\infty\delta_{a_k}^2
+
\delta_{a_k}^2
V(p_{a_k}+\delta_{a_k}x)
\right]W_{a_k}
=
a_kW_{a_k}^3
\ \text{in }\mathbb R^2.
\end{equation}
Then for each fixed
\(\eta\in(0,1/4)\), we can choose \(R_\eta>0\) such that
\[
-\mu_{a_k}^\infty\delta_{a_k}^2
+
\delta_{a_k}^2V(p_{a_k}+\delta_{a_k}x)
-
a_kW_{a_k}^2(x)
\ge
\left(1-\frac{\eta}{4}\right)^2
\]
for \(|x|\ge R_\eta\) and all sufficiently large \(k\).
Set
$\sigma:=1-\frac{\eta}{4}$,
then
\[
-\Delta W_{a_k}+\sigma^2W_{a_k}\le0,\
|x|\ge R_\eta.
\]
On the other hand,
\[
(-\Delta+\sigma^2)e^{-\sigma|x|}
=
\frac{\sigma}{|x|}e^{-\sigma|x|}
>0,
\  x\ne0.
\]
Since \(W_{a_k}\to q\) locally uniformly, after enlarging \(C_\eta\) if
necessary, we have
\[
W_{a_k}(x)
\le
C_\eta e^{-\sigma R_\eta}
\
\text{on }\partial B_{R_\eta}(0)
\]
uniformly for all sufficiently large \(k\). The comparison principle
therefore gives
\[
W_{a_k}(x)
\le
C_\eta e^{-\sigma|x|},
\  |x|\ge R_\eta.
\]

We next estimate the spatial derivatives. Write
\(
-\Delta W_{a_k}=F_{a_k},
\)
where
\[
F_{a_k}(x)
:=
\mu_{a_k}^\infty\delta_{a_k}^2W_{a_k}
+
a_kW_{a_k}^3
-
\delta_{a_k}^2
V(p_{a_k}+\delta_{a_k}x)W_{a_k}.
\]
Since
\[
\frac{|p_{a_k}|-1}{\delta_{a_k}}\to0\ \text{as}\ k\to\infty,
\]
the sequence \(\{p_{a_k}\}\) is bounded. Obviously, there exists $C>0$ such that
$V(x)\le C(1+|x|^2)$
for $x\in\mathbb R^2$.
Hence, for every fixed \(p>2\), the preceding exponential estimate
implies that there exist $C_{\eta,p}>0$ such that
for \(r:=|x|\ge R_\eta+2\) and some fixed integer \(M\ge0\),
\[
\|F_{a_k}\|_{L^p(B_1(x))}
+
\|W_{a_k}\|_{L^p(B_1(x))}
\le
C_{\eta,p}(1+r)^M e^{-\sigma(r-1)}
\]
uniformly for all sufficiently large
\(k\). The standard interior \(W^{2,p}\)-estimate
\cite{Gilbarg-1997} yields for $p>2$,
\[
\|W_{a_k}\|_{W^{2,p}(B_{1/2}(x))}
\le
C_{\eta,p}(1+r)^M e^{-\sigma(r-1)}.
\]
Since the space dimension is two,
\[
W^{2,p}(B_{1/2}(x))
\hookrightarrow
C^{1,\alpha}(B_{1/2}(x)),
\
\alpha=1-\frac2p,
\]
and then
\[
|\nabla W_{a_k}(x)|
\le
C_{\eta,p}(1+r)^M e^{-\sigma(r-1)}.
\]

To estimate the second derivatives, differentiate the rescaled equation \eqref{eqn:rescaled-equation}
in the distributional sense. Since
\(
V\in W^{1,\infty}_{\rm loc}(\mathbb R^2),
\) we obtain for \(j=1,2\),
\[
\begin{aligned}
-\Delta(\partial_jW_{a_k})
=&
\left(
\mu_{a_k}^\infty\delta_{a_k}^2
+
3a_kW_{a_k}^2
-
\delta_{a_k}^2
V(p_{a_k}+\delta_{a_k}x)
\right)
\partial_jW_{a_k}
\\
&-
\delta_{a_k}^3
(\partial_jV)(p_{a_k}+\delta_{a_k}x)
W_{a_k}.
\end{aligned}
\]
Note that
\(
|\nabla V(x)|
\le
C(1+|x|)
\ \text{for a.e. }x\in\mathbb R^2.
\)
Using the bounds already obtained for \(W_{a_k}\) and
\(\nabla W_{a_k}\), we get
\[
\begin{aligned}
&
\left\|
\left(
\mu_{a_k}^\infty\delta_{a_k}^2
+
3a_kW_{a_k}^2
-
\delta_{a_k}^2
V(p_{a_k}+\delta_{a_k}\,\cdot)
\right)
\partial_jW_{a_k}
\right\|_{L^p(B_1(x))}
\\
&\
+
\left\|
\delta_{a_k}^3
(\partial_jV)(p_{a_k}+\delta_{a_k}\,\cdot)
W_{a_k}
\right\|_{L^p(B_1(x))}
\\
&\
\le
C_{\eta,p}(1+r)^M e^{-\sigma(r-1)}.
\end{aligned}
\]
Applying the interior \(W^{2,p}\)-estimate once more, now to
\(\partial_jW_{a_k}\), gives
\[
\|\partial_jW_{a_k}\|_{W^{2,p}(B_{1/2}(x))}
\le
C_{\eta,p}(1+r)^M e^{-\sigma(r-1)}.
\]
Consequently,
\[
|D^2W_{a_k}(x)|
\le
C_{\eta,p}(1+r)^M e^{-\sigma(r-1)}.
\]
Since
\(
\sigma-(1-\eta)=\frac{3\eta}{4}>0,
\)
then
\eqref{eqn:W-A-decay} holds. Moreover, by the standard interior
higher-regularity estimates (see, e.g.,
\cite[Section~6.3]{Evans-2010}), we have
\[
\|D^3W_{a_k}\|_{L^2(B_{1/2}(x))}
\le
C_\eta(1+r)^M e^{-\sigma(r-1)}
\]
for \(r=|x|\ge R_\eta+1\), uniformly for all sufficiently large \(k\). Thus,
\eqref{eqn:weighted-L-2-bound} holds for every integer \(m\ge0\).

Finally, we prove the lower exponential estimate. Fix \(L>0\) and choose
\(
1<\kappa<\lambda<1+\eta.
\)
After increasing \(R_\eta\) if necessary,
\[
\kappa^2-\lambda^2+\frac{\lambda}{r}<0,
\  r\ge R_\eta.
\]
In the annulus
$
R_\eta
\le
|x|
\le
\frac{2L}{\delta_{a_k}},
$
the point $
p_{a_k}+\delta_{a_k}x
$
remains in a fixed bounded set depending only on \(L\). Hence, as $k\to\infty$,
\[
\sup_{|x|\le2L/\delta_{a_k}}
\delta_{a_k}^2
V(p_{a_k}+\delta_{a_k}x)
\to0.
\]
Together with
$
-\mu_{a_k}^\infty\delta_{a_k}^2\to1
$
and \(\kappa>1\), this gives for all sufficiently large \(k\),
\[
-\mu_{a_k}^\infty\delta_{a_k}^2
+
\delta_{a_k}^2
V(p_{a_k}+\delta_{a_k}x)
\le
\kappa^2
\]
in the annulus $R_\eta
\le
|x|
\le
\frac{2L}{\delta_{a_k}}
$, and then
\[
-\Delta W_{a_k}+\kappa^2W_{a_k}\ge0.
\]

Set
\[
\psi_{a_k}(r)
:=
e^{-\lambda r}
-
e^{-\lambda\frac{2L}{\delta_{a_k}}},
\]
then
$
\psi_{a_k}\left(\frac{2L}{\delta_{a_k}}\right)=0.
$
For
$
R_\eta\le r\le\frac{2L}{\delta_{a_k}},
$
we have
\[
(-\Delta+\kappa^2)\psi_{a_k}(r)
=
\left(
\kappa^2-\lambda^2+\frac{\lambda}{r}
\right)e^{-\lambda r}
-
\kappa^2e^{-\lambda\frac{2L}{\delta_{a_k}}}
<0.
\]
Because \(W_{a_k}\to q\) locally uniformly and \(q>0\), there exists
\(c_0>0\) such that
\[
W_{a_k}(x)\ge c_0,
\
|x|=R_\eta
\]
for all sufficiently large \(k\). Choosing \(c>0\) sufficiently small,
we have
\[
W_{a_k}\ge c\psi_{a_k}
\ \text{on }|x|=R_\eta,
\]
while on the outer boundary \(|x|=\frac{2L}{\delta_{a_k}}\), we have $c\psi_{a_k}=0\le W_{a_k}$.
The comparison principle therefore yields
\[
W_{a_k}(x)
\ge
c\psi_{a_k}(|x|),
\
R_\eta
\le
|x|
\le
\frac{2L}{\delta_{a_k}}.
\]
If $|x|\le\frac{L}{\delta_{a_k}}$,
then
$
\frac{2L}{\delta_{a_k}}-|x|
\ge
\frac{L}{\delta_{a_k}},
$
and, for all sufficiently large \(k\),
\[
\begin{aligned}
\psi_{a_k}(|x|)
&=
e^{-\lambda|x|}
\left[
1-
e^{-\lambda
\left(
\frac{2L}{\delta_{a_k}}-|x|
\right)}
\right]
\ge
\frac12e^{-\lambda|x|}
\ge
\frac12e^{-(1+\eta)|x|}.
\end{aligned}
\]
This completes the proof.
\end{proof}

\section{Nondegeneracy and a uniform quadratic-growth estimate}\label{sec:Nondegeneracy}
The purpose of this section is to establish the uniform quadratic-growth estimate for the whole space energy near the ground state manifold \(\mathcal{G}_a\), which will provide the key lower-bound ingredient in the comparison between \(e_{\Omega}(a)\) and \(e_{\infty}(a)\) in Section \ref{sec:boundary-correction}. At a whole space minimizer \(U_a\in\mathcal{G}_a\), the direct linearization of \(E_a^\infty\) on the mass constraint yields the quadratic form
\[
\mathfrak{q}_{a,U_a}(h,h)
:=
\int_{\mathbb{R}^2}
\left[
|\nabla h|^2
+
(V-\mu_a^\infty)h^2
-
3aU_a^2h^2
\right]\mathrm{d}x,
\
h\in T_{U_a},
\]
where
\[
T_{U_a}=\{h\in H_V^1(\mathbb{R}^2):\langle U_a,h\rangle_{L^2(\mathbb R^2)}=0\}.
\]
If \(\mathfrak{q}_{a,U_a}\) is uniformly coercive on \(T_{U_a}\), it would immediately give a quadratic lower bound for the energy near \(U_a\).

However, this is not the case: the ring-shaped potential generates an exact rotational degeneracy, and the translation and scaling directions become almost degenerate as \(a\nearrow a^*\). Consequently, the quadratic form \(\mathfrak{q}_{a,U_a}\) has zero or nearly zero directions, and no uniform positive lower bound can hold on the tangent space \(T_{U_a}\). We therefore first prove the constrained nondegeneracy of the limiting profile \(Q\), identifying the zero directions of the linearized form under the mass constraint.
Then near a whole space minimizer \(U_a\), we remove the rotational degeneracy by an exact modulation
and treat the remaining radial-translation and scaling directions through a two-dimensional Lyapunov-Schmidt reduction. This yields uniform coercivity of order \(\delta_a^{-2}\) on the complementary space and a positive quadratic lower bound for the reduced energy. Finally, we transfer the weighted \(\#\)-norm estimate to the \(\ast\)-norm used later, thereby obtaining the desired uniform quadratic-growth estimate for \(E_a^\infty\) near \(\mathcal{G}_a\) for \(a\) sufficiently close to $a^*$.

\subsection{Constrained nondegeneracy of the limiting profile}
\ \\

Set
\begin{equation*}
\Lambda Q:=Q+x\cdot\nabla Q
\end{equation*}
and the quadratic form
\begin{equation*}
\mathcal B(\phi,\psi)
:=
\int_{\mathbb R^2}
\left[
\nabla\phi(x)\cdot\nabla\psi(x)
+
\phi(x)\psi(x)
-
3Q^2(x)\phi(x)\psi(x)
\right] \mathrm dx
=\langle L_+\phi,\psi\rangle.
\end{equation*}
For simplicity,
we write
$
\langle f,g\rangle
:=\langle f,g\rangle_{L^2(\mathbb R^2)}
$
for \(f,g\in L^2(\mathbb R^2)\).
Define the linearized
operator $L_+:=-\Delta+1-3Q^2$, then $\mathcal B(\phi,\psi)=\langle L_+\phi,\psi \rangle$.
To establish the following
 constrained nondegeneracy, we shall use the classical kernel characterization of the linearized
operator $L_+$, see \cite{Kwong1989,NiTakagi1991}.
Namely, $ker(L_+)=\Span\{\pa_1Q,\pa_2Q\}$, where $\pa_iQ:=\frac{\partial Q}{\partial x_i}$, $i=1,2$.

\begin{lemma}\label{lem:Qcoercive}
If $\inner{Q}{\phi}=0$, then
\[
\mathcal B(\phi,\phi)\ge0.
\]
Moreover, the equality holds if and only if
\[
\phi\in
\Span\{\Lambda Q,\pa_1Q,\pa_2Q\}.
\]
Consequently, there exists $c_0>0$ such that
\begin{equation*}
\mathcal B(\phi,\phi)
\ge
c_0\norm{\phi}_{H^1(\mathbb R^2)}^2
\end{equation*}
when
\[
\inner{\phi}{Q}
=
\inner{\phi}{\Lambda Q}
=
\inner{\phi}{\pa_1Q}
=
\inner{\phi}{\pa_2Q}
=0.
\]
\end{lemma}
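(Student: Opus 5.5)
The argument is the classical Weinstein spectral analysis of $L_+$. Three facts drive it: the algebraic identities of $L_+$ on $Q$ and $\Lambda Q$, the fact that $L_+$ has exactly one negative eigenvalue, and the kernel characterization $\ker L_+=\Span\{\pa_1Q,\pa_2Q\}$ quoted above.

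\emph{Step 1: algebraic identities.} Differentiating $-\Delta Q_\lambda+Q_\lambda-Q_\lambda^3=0$ for the family $Q_\lambda(x)=\lambda Q(\lambda x)$ at $\lambda=1$ gives
\[
L_+(\Lambda Q)=-2Q .
\]
Moreover $L_+Q=-2Q^3$, so $\inner{L_+Q}{Q}<0$. Since $L_+$ is a compact perturbation of $-\Delta+1$, its essential spectrum is $[1,\infty)$. The radial ground state $Q$ is a mountain-pass solution (equivalently, a minimizer of the Nehari or Weinstein functional), so $L_+$ has exactly one negative eigenvalue $-\nu_0<0$, with a positive radial eigenfunction $\chi$. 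Its next eigenvalue is $0$, with eigenspace $\ker L_+$.

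\emph{Step 2: nonnegativity on $Q^\perp$.} Let $\inner{\phi}{Q}=0$ and decompose
\[
\phi=\alpha\chi+\phi_1+\phi_2,\qquad \phi_1\in\ker L_+,\qquad \phi_2\perp\{\chi,\ker L_+\}.
\]
On the range of $\phi_2$ we have $\mathcal B(\phi_2,\phi_2)\ge\nu_1\norm{\phi_2}_{L^2}^2$ for the spectral gap $\nu_1>0$. Set $\psi:=\Lambda Q$. The constraint gives
\[
0=\inner{\phi}{Q}=-\tfrac12\inner{\phi}{L_+\psi}=-\tfrac12\mathcal B(\phi,\psi).
\]
Decompose $\psi$ in the same way, $\psi=\beta\chi+\psi_1+\psi_2$. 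The key quantity is
\[
\mathcal B(\psi,\psi)=-2\inner{Q}{\Lambda Q}=-2\int (Q^2+\tfrac12 x\cdot\nabla Q^2)=0 ,
\]
where the last equality uses that $\int x\cdot\nabla(Q^2)=-2\int Q^2$ in dimension $2$; this is the $L^2$-criticality. So $-\beta^2\nu_0+\mathcal B(\psi_2,\psi_2)=0$.

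Then apply Cauchy--Schwarz for the positive form $\mathcal B$ on the $\phi_2$-component. From $\mathcal B(\phi,\psi)=0$ we get $\alpha\beta\nu_0=\mathcal B(\phi_2,\psi_2)$, hence
\[
\alpha^2\beta^2\nu_0^2\le \mathcal B(\phi_2,\phi_2)\,\mathcal B(\psi_2,\psi_2)=\mathcal B(\phi_2,\phi_2)\,\beta^2\nu_0 .
\]
We have $\beta\neq0$, since $\mathcal B(\psi,\psi)=0$ while $\psi\notin\ker L_+$ (because $L_+\psi=-2Q\ne0$). Therefore
\[
\mathcal B(\phi,\phi)=-\alpha^2\nu_0+\mathcal B(\phi_2,\phi_2)\ge0 .
\]

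\emph{Step 3: the equality case.} Equality forces equality in Cauchy--Schwarz, so $\phi_2=t\psi_2$, and then $\alpha=t\beta$. Hence $\phi-t\psi\in\ker L_+$, i.e.\ $\phi\in\Span\{\Lambda Q,\pa_1Q,\pa_2Q\}$. Conversely, these directions satisfy $\inner{\pa_iQ}{Q}=0$ and $\inner{\Lambda Q}{Q}=0$, and they annihilate $\mathcal B$. For $\pa_iQ$ this holds because it lies in the kernel; for mixed terms $\mathcal B(\Lambda Q,\pa_iQ)=0$ again by the kernel; and $\mathcal B(\Lambda Q,\Lambda Q)=0$ was computed in Step 2.

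\emph{Step 4: coercivity.} Suppose the additional orthogonality conditions hold but the $H^1$-bound fails. Take a sequence with $\norm{\phi_n}_{H^1}=1$ and $\mathcal B(\phi_n,\phi_n)\to0$, and pass to a weak $H^1$ limit $\phi$. Because $Q^2$ decays, $\int Q^2\phi_n^2\to\int Q^2\phi^2$. Weak lower semicontinuity then gives $\mathcal B(\phi,\phi)\le0$, and the orthogonality conditions pass to the limit. By Steps 2--3, $\phi$ lies in $\Span\{\Lambda Q,\pa_1Q,\pa_2Q\}$ and is orthogonal to it, so $\phi=0$. Then $\int Q^2\phi_n^2\to0$, and
\[
\mathcal B(\phi_n,\phi_n)=\norm{\phi_n}_{H^1}^2-3\int Q^2\phi_n^2\to1 ,
\]
a contradiction.

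The main obstacle is Step 2. It hinges on the vanishing $\inner{Q}{\Lambda Q}=0$, which is special to mass-critical dimension $2$, and on the Morse index of $L_+$ being one. The latter I would quote from the variational characterization of $Q$ (Weinstein) together with the cited kernel result.
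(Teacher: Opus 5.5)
Your proof is correct. Step~4 is the same compactness argument as the paper's, but Steps~2--3 take a genuinely different route.

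The paper uses no spectral information beyond the kernel. It gets $\mathcal B(\phi,\phi)\ge0$ on $Q^\perp$ as the second-order condition of the Gagliardo--Nirenberg minimization. With $F(u)=\int|\nabla u|^2-\frac12\int u^4\ge0$ on the sphere $\|u\|_{L^2}^2=a^*$, it expands along the mass-normalized curve $Q_t=\sqrt{a^*}(Q+t\phi)/\|Q+t\phi\|_{L^2}$ to get $F(Q_t)=t^2\mathcal B(\phi,\phi)+O(t^3)$. For the equality case, it notes that a zero of a nonnegative form on $Q^\perp$ is a critical point of that form. Hence $\mathcal B(\phi,\psi)=0$ for all $\psi\perp Q$, so $L_+\phi=cQ$. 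Then $L_+(\phi+\frac c2\Lambda Q)=0$ together with $\ker L_+=\Span\{\pa_1Q,\pa_2Q\}$ finishes the proof.

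You instead run Weinstein's spectral criterion exactly at its borderline. This requires quoting that the Morse index of $L_+$ is one, and a spectral gap above $0$ coming from $\sigma_{\rm ess}(L_+)=[1,\infty)$. The mass-critical identity $\mathcal B(\Lambda Q,\Lambda Q)=-2\inner{Q}{\Lambda Q}=0$, combined with Cauchy--Schwarz on the positive spectral subspace, then gives both nonnegativity and the equality case. Your argument that $\beta\ne0$ and your extraction of $\phi-t\Lambda Q\in\ker L_+$ are both fine.

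The paper's route is more economical here. GN optimality of $Q$ is already the defining input for $a^*$, and it implies the Morse-index bound you quote: nonnegativity on the codimension-one space $Q^\perp$ allows at most one negative direction. Your route, in exchange, shows why $\inner{Q}{\Lambda Q}=0$ (i.e.\ $\inner{L_+^{-1}Q}{Q}=0$) places the problem exactly at the degenerate threshold. It would also adapt to non-critical nonlinearities, where the sign of that quantity is what decides positivity.

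One small slip in Step~1: $Q_\lambda(x)=\lambda Q(\lambda x)$ solves $-\Delta Q_\lambda+\lambda^2Q_\lambda-Q_\lambda^3=0$, not the $\lambda$-independent equation you wrote. It is precisely the $\lambda$-derivative of the $\lambda^2$ term that produces the $-2Q$ in $L_+(\Lambda Q)=-2Q$. Differentiating the equation as you stated it would give $L_+(\Lambda Q)=0$.
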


\begin{proof}
Consider
\[
F(u):=\int_{\mathbb R^2}|\nabla u(x)|^2\mathrm dx-\frac12\int_{\mathbb R^2}u^4(x)\mathrm dx
\]
on the sphere $\|u\|_{L^2(\mathbb R^2)}^2=a^*$.  By the Gagliardo-Nirenberg
inequality \eqref{eqn:GN}, $F(u)\ge F(Q)=0$.  If $\phi\perp Q$, define
\[
Q_t
:=
\frac{\sqrt{a^*}(Q+t\phi)}{\|Q+t\phi\|_{L^2(\mathbb R^2)}}
\]
for $t\in \mathbb R$.
Then
\[
Q_t
=
Q+t\phi
-\frac{t^2}{2a^*}\|\phi\|_{L^2(\mathbb R^2)}^2Q
+O(t^3),
\]
and direct expansion gives
\[
F(Q_t)
=
t^2\mathcal B(\phi,\phi)+O(t^3).
\]
Hence $\mathcal B(\phi,\phi)\ge0$.

If equality holds, then for every $\psi\perp Q$,
$
\mathcal B(\phi,\psi)=0.
$
Indeed, for any $\varepsilon\in\mathbb R$, we have $\phi+\varepsilon\psi\perp Q$
and
\[
0\leq\mathcal B(\phi+\varepsilon\psi,\phi+\varepsilon\psi)=\mathcal B(\phi,\phi)+2\varepsilon
\mathcal B(\phi,\psi)+\varepsilon^2\mathcal B(\psi,\psi).
\]
If $\mathcal B(\phi,\psi)\not=0$, choosing $\varepsilon$  sufficiently small with an appropriate sign,
we obtain a contradiction. That is, $ \mathcal B(\phi,\psi)=\langle L_+\phi,\psi\rangle=0$, $\forall \psi\perp Q$,
and thus
$
L_+\phi=cQ
$
for some $c\in\mathbb R$.  Since
$
L_+(\Lambda Q)=-2Q,
$
one has
$
L_+\left(\phi+\frac c2\Lambda Q\right)=0.
$
The nondegeneracy of $Q$ therefore gives
\[
\phi+\frac c2\Lambda Q
\in
\Span\{\pa_1Q,\pa_2Q\},
\]
that is,
$
\phi\in
\Span\{\Lambda Q,\pa_1Q,\pa_2Q\}.
$
The reverse is obvious.

Now, we prove the final coercivity estimate.
Suppose by contradiction that there is a sequence \(\{\phi_n\}\subset H^1(\mathbb R^2)\) such that
\[
\langle \phi_n,Q\rangle
=
\langle \phi_n,\Lambda Q\rangle
=
\langle \phi_n,\partial_1Q\rangle
=
\langle \phi_n,\partial_2Q\rangle
=0
\]
and
\[
\|\phi_n\|_{H^1(\mathbb R^2)}=1,\
\mathcal B(\phi_n,\phi_n)\to 0
\]
as $n\to\infty$.
Passing to a subsequence, we may assume that
$
\phi_n\rightharpoonup \phi_0 \ \text{in}\ H^1(\mathbb R^2)
$
as $n\to\infty$.
Then
\begin{equation}\label{eqn:phi-0}
\langle \phi_0,Q\rangle=
\langle \phi_0,\Lambda Q\rangle=
\langle \phi_0,\partial_1Q\rangle=
\langle \phi_0,\partial_2Q\rangle=0
\end{equation}
and
\[
\int_{\mathbb R^2} Q^2(x)\phi_n^2(x)\mathrm dx
\to
\int_{\mathbb R^2} Q^2(x)\phi_0^2(x)\mathrm dx
\]
as $n\to\infty$.
On the other hand, from \(\|\phi_n\|_{H^1(\mathbb R^2)}=1\) and \(\mathcal B(\phi_n,\phi_n)\to 0\) as $n\to\infty$, we obtain that as $n\to\infty$
\[
\mathcal B(\phi_n,\phi_n)
=
\|\phi_n\|_{H^1(\mathbb R^2)}^2
-
3\int_{\mathbb R^2}Q^2(x)\phi_n^2(x)\mathrm dx
\to  0,
\]
which implies that
\[
3\int_{\mathbb R^2}Q^2(x)\phi_n^2(x)\mathrm dx\to 1\
\text{and}\
\int_{\mathbb R^2}Q^2(x)\phi_0^2(x)\mathrm dx=\frac13>0.
\]
This means that \(\phi_0\neq 0\).
By the weak lower semicontinuity of the \(H^1(\mathbb R^2)\)-norm, we can deduce that
\[
\mathcal B(\phi_0,\phi_0)
=
\|\phi_0\|_{H^1(\mathbb R^2)}^2
-
3\int_{\mathbb R^2}Q^2(x)\phi_0^2(x)\mathrm dx
\le0.
\]
On the other hand, \(\phi_0\perp Q\) in \(L^2(\mathbb R^2)\) gives
$
\mathcal B(\phi_0,\phi_0)\ge 0.
$
Hence $\mathcal B(\phi_0,\phi_0)=0$
and $\phi_0\in \operatorname{span}\{\Lambda Q,\partial_1Q,\partial_2Q\}.$
From \eqref{eqn:phi-0},
$\phi_0=0.$
This contradicts the fact that \(\phi_0\neq 0\).
The proof is completed.
\end{proof}

\subsection{Uniform quadratic-growth estimate}
\ \\

Fix \(U_a\in\mathcal G_a\) and recall that \(p_a\) is the global maximum point of
\(U_a\), \(\mu_a^\infty\) is the corresponding Lagrange multiplier, and the Euler-Lagrange equation is
\[
-\Delta U_a+VU_a
=
\mu_a^\infty U_a+aU_a^3
\ \text{in }\mathbb R^2.
\]
Consider the quadratic form
\[
\mathfrak q_{a,U_a}(h,k)
=
\int_{\mathbb R^2}
\left[
\nabla h(x)\cdot\nabla k(x)
+
(V(x)-\mu_a^\infty)h(x)k(x)
-
3aU_a^2(x)h(x)k(x)
\right] \mathrm dx,
\]
and write
\[
\mathfrak q_{a,U_a}(h):=\mathfrak q_{a,U_a}(h,h).
\]
Recall that the tangent space to the mass
constraint at \(U_a\) is
\[
T_{U_a}
=
\left\{
h\in H_V^1(\mathbb R^2):
\langle U_a,h\rangle=0
\right\}.
\]
Since \(U_a\) is a constrained minimizer,
\[
\mathfrak q_{a,U_a}(h)\ge0,
\ \forall h\in T_{U_a}.
\]
Moreover, we define the two scaled norms
\[
\|h\|_{*,a}^2
:=
\|h\|_{L^2(\mathbb R^2)}^2
+
\delta_a^2\|\nabla h\|_{L^2(\mathbb R^2)}^2
\]
and
\[
\|h\|_{\#,a}^2
:=
\|h\|_{*,a}^2
+
\delta_a^2\int_{\mathbb R^2}V(x)h^2(x)\mathrm dx.
\]
For every fixed \(a<a^*\), the \(\#\)-norm is equivalent to
\(\|\cdot\|_{H_V^1(\mathbb R^2)}\).

Set
\[
e_a:=\frac{p_a}{|p_a|},
\
J:=
\begin{pmatrix}
0&-1\\
1&0
\end{pmatrix},
\]
so that \(J\) is rotation by \(\pi/2\). Define
\[
Z_{0,a}
:=
-\delta_a(Jx)\cdot\nabla U_a,
\
Z_{1,a}
:=
-\delta_a e_a\cdot\nabla U_a,
\
Z_{2,a}
:=
U_a+(x-p_a)\cdot\nabla U_a.
\]

Without loss of generality, assume that \(e_a=p_a/|p_a|=e_1=(1,0)\). The blow-up
representatives of \(Z_{0,a}\), \(Z_{1,a}\), and \(Z_{2,a}\) are
\[
\delta_a Z_{0,a}(p_a+\delta_a\zeta)
=
-\bigl(Jp_a+\delta_aJ\zeta\bigr)\cdot\nabla W_a(\zeta),
\]
\[
\delta_a Z_{1,a}(p_a+\delta_a\zeta)
=
-e_1\cdot\nabla W_a(\zeta),\\
\]
and
\[
\delta_a Z_{2,a}(p_a+\delta_a\zeta)
=
W_a(\zeta)+\zeta\cdot\nabla W_a(\zeta).
\]
For any sequence $\{a_k\}$ satisfying $a_k\nearrow a^*$,
passing to a subsequence if necessary,
still denoted by \(\{a_k\}\), we may assume that all the
convergences established in Section \ref{sec:Blow-up} hold along this subsequence.
To simplify the notation, throughout this section we omit the
index \(k\) and write
\[
a,\  U_a,\  p_a,\  \delta_a,\  \mu_a^\infty,\ldots
\]
instead of
\[
a_k,\  U_{a_k},\  p_{a_k},\
\delta_{a_k},\  \mu_{a_k}^\infty,\ldots .
\]
Accordingly, all limits as \(a\nearrow a^*\) below are understood
along this fixed subsequence.
In this sense, we shall also use the regularity estimates in Lemma~\ref{lem:whole}. If
\[
W_a(\zeta):=\delta_aU_a(p_a+\delta_a\zeta),\ \zeta\in\mathbb R^2,
\]
then as \(a\nearrow a^*\),
\[
W_a\to q
\ \text{strongly in }H^2_{\rm loc}(\mathbb R^2),
\]
where $q=\frac{Q}{\sqrt{a^*}}$.
The pointwise estimate \eqref{eqn:W-A-decay} and the uniform polynomially
weighted \(L^2(\mathbb R^2)\)-bound \eqref{eqn:weighted-L-2-bound} for derivatives through order three
recorded in Lemma~\ref{lem:whole} will be used when derivatives of the
translation, scaling, or rotation parameters are taken below.
Moreover,
 Lemma~\ref{lem:whole} implies that all blow-up
representatives of \(Z_{0,a}\), \(Z_{1,a}\), and \(Z_{2,a}\)
converge strongly in \(H^1(\mathbb R^2)\) to
\[
-\partial_2q,\ -\partial_1q,\ \Lambda q,
\]
respectively. Here $\Lambda q=q+x\cdot \nabla q$.
By the exponential decay in Lemma~\ref{lem:whole},
\[
Z_{j,a}\in H_V^1(\mathbb R^2),
\ j=0,1,2.
\]
The corresponding rotation, translation, and scaling curves preserve the
\(L^2(\mathbb R^2)\)-norm. Hence \(Z_{j,a}\in T_{U_a}\). Since \(V\) is radial,
rotations are exact symmetries of \(E_a^\infty\), and therefore
\[
\mathfrak q_{a,U_a}(Z_{0,a},h)=0,
\  h\in T_{U_a}.
\]
The corresponding curves are given by Steps 1, 2, and 4 below.  Now, we state the uniform quadratic growth.
\begin{lemma}
\label{lem:uniformcoercive}
Along the subsequence fixed above, for \(a\) sufficiently close to $a^*$,
there exist \(c>0\) and \(\rho_0>0\), independent of
$a$ and \(U_a\in\mathcal G_a\), such that the following hold.
\begin{itemize}
\item[$(i)$]\emph{(Coercivity modulo rotation, translation, and scaling directions).}
If \(h\in T_{U_a}\) and
\[
\langle h,Z_{0,a}\rangle
=
\langle h,Z_{1,a}\rangle
=
\langle h,Z_{2,a}\rangle
=0,
\]
then
\begin{equation}
\label{eq:sec23-074}
\mathfrak q_{a,U_a}(h)
\ge
c\delta_a^{-2}\|h\|_{\#,a}^2.
\end{equation}

\smallskip
\noindent
\item[$(ii)$]\emph{(Quadratic growth near \(\mathcal{G}_a\)).}
If
\[
v\in H_V^1(\mathbb R^2),
\
\|v\|_{L^2(\mathbb R^2)}=1,
\
\operatorname{dist}_{*,a}(v,\mathcal G_a)<\rho_0,
\]
then
\begin{equation}
\label{eq:sec23-075}
E_a^\infty(v)-e_\infty(a)
\ge
c\delta_a^2
\operatorname{dist}_{*,a}(v,\mathcal G_a)^2,
\end{equation}
where
\[
\operatorname{dist}_{*,a}(v,\mathcal G_a)
:=
\inf_{\widetilde U_a\in\mathcal G_a}
\|v-\widetilde U_a\|_{*,a}.
\]
\end{itemize}
\end{lemma}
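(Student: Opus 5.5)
For part (i) I will argue by contradiction after blowing up. Suppose there are $a\nearrow a^*$ (along the fixed subsequence), $U_a\in\mathcal G_a$ and $h_a\in T_{U_a}$, orthogonal to $Z_{0,a},Z_{1,a},Z_{2,a}$, with $\|h_a\|_{\#,a}=1$ and $\delta_a^2\mathfrak q_{a,U_a}(h_a)\to0$. Rescale by setting $\phi_a(\zeta):=\delta_a h_a(p_a+\delta_a\zeta)$. Then $\|h_a\|_{*,a}^2=\|\phi_a\|_{H^1}^2$, and
\[
\delta_a^2\mathfrak q_{a,U_a}(h_a)=\int\bigl[|\nabla\phi_a|^2+(-\mu_a^\infty\delta_a^2+\delta_a^2V(p_a+\delta_a\zeta))\phi_a^2-3aW_a^2\phi_a^2\bigr]\,\mathrm d\zeta .
\]
By Lemma~\ref{lem:whole}, $-\mu_a^\infty\delta_a^2\to1$. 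So this expression dominates $c\|\phi_a\|_{\#}^2-3a\int W_a^2\phi_a^2$, where the $\#$-part includes the potential term. Up to a subsequence $\phi_a\rightharpoonup\phi_0$ in $H^1$. Since $W_a\to q$ with the uniform exponential decay \eqref{eqn:W-A-decay}, we get $\int W_a^2\phi_a^2\to\int q^2\phi_0^2$.

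The orthogonality conditions pass to the limit because the blow-up representatives converge strongly in $H^1$. We have $\delta_a U_a(p_a+\delta_a\cdot)=W_a\to q$, and the $Z_{j,a}$ representatives converge to $-\partial_2q,-\partial_1q,\Lambda q$. Hence $\phi_0\perp Q,\partial_1Q,\partial_2Q,\Lambda Q$. In the rotational mode the term $Jp_a\cdot\nabla W_a$ dominates and $Jp_a\to Je_1=e_2$, which is why that representative tends to $-\partial_2 q$.

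If $\phi_0=0$, then $\int W_a^2\phi_a^2\to0$, and the quadratic form is $\ge c\|\phi_a\|_{\#}^2\to c>0$, a contradiction. Otherwise, the identity $a^*q^2=Q^2$ gives $3a^*q^2=3Q^2$. Lower semicontinuity then yields $\mathcal B(\phi_0,\phi_0)\le\liminf\bigl(\cdot\bigr)-(\text{nonnegative defect})$. Combining this with the coercivity in Lemma~\ref{lem:Qcoercive} and with the defect of mass of $\|\phi_a\|_\#$, we get
\[
0\ge c_0\|\phi_0\|_{H^1}^2+\liminf\bigl(\|\phi_a\|_{\#}^2-\|\phi_0\|_{H^1}^2\bigr)\cdot c .
\]
This forces $\phi_0=0$ and the $\#$-norm to vanish, contradicting normalization. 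The $V$-term only helps, since it is nonnegative.

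For part (ii), take $v$ with $\mathrm{dist}_{*,a}(v,\mathcal G_a)<\rho_0$. First use the exact rotational symmetry and the uniqueness of $U_a$ up to rotation \cite{PengYangZeng2024,zeng2025}. We choose a rotation angle $\theta$ minimizing $\|v-U_a(R_\theta\cdot)\|_{L^2}$; after relabeling $U_a$, this gives $\langle v-U_a,Z_{0,a}\rangle=0$. Next comes a two-parameter Lyapunov--Schmidt step. Introduce the family $U_{a,s,\lambda}(x):=\lambda U_a(\lambda(x-p_a)+p_a-s\delta_ae_a)$, normalized in $L^2$, for small $(s,\lambda-1)$ measured in units of $\delta_a$. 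By the implicit function theorem, using the nondegenerate Gram matrix of the blow-up limits $-\partial_1q,\Lambda q$, pick $(s,\lambda)$ so that $h:=v-U_{a,s,\lambda}$ is orthogonal to $Z_{1,a},Z_{2,a}$ up to errors controlled by $\|h\|_{*,a}$. The third-order weighted bounds \eqref{eqn:weighted-L-2-bound} are needed to differentiate in these parameters.

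Then expand the energy around $U_a$ and split $v-U_a$ into three pieces: the component $h$ in the complement, the tangential translation/scaling piece, and the second-order mass correction. On the piece $h$, part (i) gives $\frac12\mathfrak q\ge c\delta_a^{-2}\|h\|_{\#,a}^2$; the cubic and quartic remainders are $O(\delta_a^{-2}\|h\|_{*,a}^3)$ by Gagliardo--Nirenberg in rescaled variables. For the translation/scaling piece, we need the reduced energy $(s,\lambda)\mapsto E_a^\infty(U_{a,s,\lambda})-e_\infty(a)$ to be bounded below by $c\delta_a^2(s^2+(\lambda-1)^2)$. This should follow from the expansion in Lemma~\ref{lem:scale}, since the energy is of size $\frac{1}{a^*t}+t(r^2+\lambda_0^4/a^*)$. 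That expression is strictly convex at the minimizer $r=0$, $t=\lambda_0^{-2}$, so the second variation in $(r,t)$ is positive, of order $(a^*-a)^{1/2}\sim\delta_a^2$. Finally, convert $\#$ to $*$ norms, since $\|\cdot\|_{*,a}\le\|\cdot\|_{\#,a}$, and note that the distance to $\mathcal G_a$ is at most the sum of the two pieces.

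The main obstacle is the reduced two-dimensional estimate: it must hold uniformly and quantitatively, with the mixed cross terms between $h$ and the modes controlled. The cross terms are handled via $\mathfrak q(Z_{j,a},h)$ being small, of order $o(\delta_a^{-2})\cdot\delta_a^{2}$, using the equation for $W_a$ and the decay bounds. I would first try to compute $\delta_a^2\mathfrak q(Z_{1,a})$ and $\delta_a^2\mathfrak q(Z_{2,a})$ directly from the rescaled equation \eqref{eqn:rescaled-equation}, to confirm that they are of order $\delta_a^4$ and positive.
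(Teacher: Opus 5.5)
Your part (i) is the paper's Step~3 (blow-up contradiction plus Lemma~\ref{lem:Qcoercive}). Normalizing directly in $\|\cdot\|_{\#,a}$ works, but only because the coefficient of $\|\phi_a\|_{\#}^2$ is $1+o(1)$, since $-\mu_a^\infty\delta_a^2\to1$. With a generic constant $c<1$, as you wrote it, the final inequality would not close.

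Part (ii) has a genuine gap, located exactly at the step you flag as the main obstacle.

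\emph{The reduced bound is not justified.} You need $E_a^\infty(U_{a,s,\lambda})-e_\infty(a)\ge c\delta_a^2(s^2+(\lambda-1)^2)$ on a parameter ball independent of $a$. This does not follow from Lemma~\ref{lem:scale} or its whole-space analogue. Writing the same family in the paper's parameters $(b,\tau)$, such arguments give at best $\delta_a^{-2}[E_a^\infty-e_\infty(a)]=\Phi(b,\tau)+o(1)$, with $\Phi(b,\tau)=b^2+\gamma(e^{2\tau}+e^{-2\tau}-2)$. Both sides vanish, together with their gradients, at the origin, so an $o(1)$ error swamps $c(b^2+\tau^2)$ for small parameters. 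Computing $\mathfrak q_{a,U_a}(Z_{j,a})$ only gives the Hessian at the origin. What is needed is the exact identity \eqref{eq:sec23-143} plus local $C^2$ convergence to $\Phi$ on a fixed ball (equivalently, uniform $O(\delta_a^2)$ bounds on third parameter derivatives). That is the content of the paper's Step~6.1.

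\emph{Expanding around $U_a$ fails.} Your plan is to expand around $U_a$ and control cross terms through smallness of $\mathfrak q_{a,U_a}(Z_{j,a},h)$. With $|p|=|(s,\lambda-1)|$ of fixed small size, a term-by-term expansion at $U_a$ leaves cross terms such as $a\int U_a(U_{a,s,\lambda}-U_a)^2h$. It also leaves $\mathfrak q_{a,U_a}$ applied to the $O(|p|^2)$ part of $U_{a,s,\lambda}-U_a$. Each is only bounded by $C\delta_a^{-2}|p|^2\|h\|_{\#,a}$. These cannot be absorbed into $\delta_a^2|p|^2+\delta_a^{-2}\|h\|_{\#,a}^2$ unless $|p|\lesssim\delta_a^2$, so $\rho_0$ would depend on $a$.

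\emph{The repair.} Expand around the modulated profile $S=U_{a,s,\lambda}$ itself and use its equation \eqref{eq:sec23-147}. Then:
the linear term equals $-\mu_a^\infty e^{2\tau}\|h\|_{L^2}^2$, which is absorbed into the Hessian at $S$, plus a residual $O(\delta_a^2|p|\,\|h\|_{L^2})$ that Young's inequality absorbs;
the cubic and quartic terms involve only $h$;
coercivity of the Hessian $\mathfrak q_S$ at $S$ (with multiplier $\mu_a^\infty e^{2\tau}$) follows from part (i) by perturbation, since $|\mathfrak q_S(h)-\mathfrak q_{a,U_a}(h)|\le C\delta_a^{-2}|p|\,\|h\|_{*,a}^2$.

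This requires all orthogonality conditions at a single base point, i.e.\ a simultaneous three-parameter modulation in rotation, radial translation and scaling. Your sequential version (rotation by $L^2$-minimization, then a two-parameter step) loses the rotational orthogonality at $S$. Also, orthogonality ``up to errors controlled by $\|h\|_{*,a}$'' is useless unless those errors are $o(1)\|h\|$.

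Once repaired this way, your orbital-stability route would replace the paper's Lyapunov--Schmidt reduction and the comparison in Step~6.3. Because the energy is a quartic polynomial in $h$ whose remainders involve only $*$-norms, it would also bypass the $\#$-to-$*$ dichotomy of Step~8.
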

\begin{proof}
We first outline the strategy of the proof and then carry it out in detail.

\medskip
\noindent\textbf{Outline of the proof.}
The proof proceeds in eight steps. Steps~1-2 compute the second variations
of the energy in the radial-translation and scaling directions, which are the
two almost-degenerate directions of \(\mathfrak{q}_{a,U_a}\) besides the
exact rotational zero mode. Step~3 proves the coercivity estimate
\[
\mathfrak{q}_{a,U_a}(h)\geq c\delta_a^{-2}\|h\|_{*,a}^2
\]
under the four orthogonal conditions, which is then
upgraded to the weighted \(\#\)-norm estimate \eqref{eq:sec23-074}. Steps~4-5 remove the exact rotational degeneracy by an exact
modulation and perform a two-dimensional Lyapunov-Schmidt reduction in the
directions \(\widehat{Z}_{1,a}\) and \(\widehat{Z}_{2,a}\). Step~6 proves that
the reduced energy is uniformly positive at the scale
\(\delta_a^2|z|^2\). Finally, Steps~7-8 combine the reduced-space and
complementary-space estimates to obtain the quadratic growth estimate \eqref{eq:sec23-075}, first in the \(\#\)-norm
and then in the \(\ast\)-norm.\\
\medskip
\noindent\textbf{Detailed proof.}
Throughout the proof, all constants below are uniform for \(a\) sufficiently close to $a^*$ and
\(U_a\in\mathcal G_a\).
For every integer \(k\ge1\), the symbol \(D^k\) denotes the \(k\)-th Fr\'echet derivative of the indicated map.
Moreover, all limits are understood as \(a\nearrow a^*\), unless otherwise stated.
\\
\medskip
\noindent
\textbf{Step 1. Second variation in the radial translation direction.}

Set
$p_a=\rho_a e_a,$
$\rho_a:=|p_a|
$
and
$$
U_{a,b}(x)
:=
U_a(x-b\delta_ae_a).
$$
This curve lies in \(H_V^1(\mathbb R^2)\) and preserves the
\(L^2(\mathbb R^2)\)-norm.
Moreover,
\[
\frac{d}{db}U_{a,b}\Big|_{b=0}
=
Z_{1,a}
\]
and
\[
\begin{aligned}
E_a^\infty(U_{a,b})
&=
\int_{\mathbb R^2}
\left(|\nabla U_a(y)|^2-\frac a2U_a^4(y)\right)\mathrm dy
+
\int_{\mathbb R^2}
V(y+b\delta_ae_a)U_a^2(y)\mathrm dy.
\end{aligned}
\]

For a unit vector \(e\in\mathbb S^1\), we use
\(\partial_{ee}V(x):=e^T D^2V(x)e\). For \(x\neq0\),
\[
D^2V(x)
=
2\widehat x\otimes\widehat x
+
2\frac{|x|-1}{|x|}
\left(
I-\widehat x\otimes\widehat x
\right),
\]
where \(\widehat x:=x/|x|\), \(\otimes\) denotes the tensor product, and \(I\) is the \(2\times2\) identity matrix.
Moreover,
\begin{equation*}
\pa_{e_ae_a}V(x)
=
2(\widehat x\cdot e_a)^2
+
2\frac{|x|-1}{|x|}
\left[
1-(\widehat x\cdot e_a)^2
\right]=2-\frac{2}{|x|}
\bigl[1-(\widehat x\cdot e_a)^2\bigr].
\end{equation*}
In particular,
\begin{equation}\label{eqn:V-upper}
\bigl|\partial_{e_ae_a}V(x)\bigr|
\leq
C\left(1+\frac1{|x|}\right),
\  x\neq0.
\end{equation}
Notice that the singularity at the origin is locally integrable in
dimension two and \(V\in W^{2,1}_{\rm loc}(\mathbb R^2)\).
For each fixed $a>0$, it is easy to see that \(U_a\) is locally bounded and decays exponentially at infinity. Hence
the map
\[
b\longmapsto
\int_{\mathbb R^2}V(y+b\delta_ae_a)U_a^2(y)\,\mathrm dy
\]
is twice differentiable near \(b=0\), with the derivatives obtained by
differentiating \(V\) under the integral.

Now, we claim that for every fixed \(R>0\),
\begin{equation}\label{eqn:partial-e-a-e-a-V}
\partial_{e_ae_a}V(p_a+\delta_a \zeta)
\to 2
\ \text{uniformly for }|\zeta|\leq R.
\end{equation}
Indeed, from Lemma~\ref{lem:whole}, we know that
\begin{equation}
\label{eq:sec23-081}
\frac{|p_a|-1}{\delta_a}\to0,
\end{equation}
which means \(\rho_a=|p_a|\to1\).
Decompose
\[
\zeta=\zeta_\parallel e_a+\zeta_\perp,
\]
where
\[
\zeta_\parallel:=\zeta\cdot e_a,
\
\zeta_\perp:=\zeta-\zeta_\parallel e_a
\]
and $\zeta_\perp\cdot e_a=0.$
Then
$
p_a+\delta_a \zeta
=
(\rho_a+\delta_a \zeta_\parallel)e_a+\delta_a \zeta_\perp,
$
and for every fixed \(R>0\) and all \(a\) sufficiently
close to \(a^*\),
\[
|p_a+\delta_a \zeta|\geq\frac12
,\ |\zeta|\leq R.
\]
Furthermore,
\[
1-
\left(
\frac{p_a+\delta_a \zeta}{|p_a+\delta_a \zeta|}
\cdot e_a
\right)^2
=
\frac{\delta_a^2|\zeta_\perp|^2}
{|p_a+\delta_a \zeta|^2}.
\]
It follows that
\[
\begin{aligned}
&
\left|
\partial_{e_ae_a}V(p_a+\delta_a \zeta)-2
\right| =
\frac{2\delta_a^2|\zeta_\perp|^2}
{|p_a+\delta_a \zeta|^3}
\leq C_R\delta_a^2,
\  |\zeta|\leq R.
\end{aligned}
\]
Therefore,
$
\sup_{|\zeta|\leq R}
\left|
\partial_{e_ae_a}V(p_a+\delta_a \zeta)-2
\right|
\to 0
$
and the claim holds.

Next, we justify  that
\begin{equation}\label{eq:radial-hessian-limit}
\int_{\mathbb R^2}
\partial_{e_ae_a}V(x)\,U_a^2(x)\mathrm dx
\to  2.
\end{equation}
 Recall from Lemma~\ref{lem:whole} that
\[
W_a(\zeta)=\delta_a U_a(p_a+\delta_a \zeta)
\to  q(\zeta)
\ \text{strongly in }H^1(\mathbb R^2),
\]
and $\int_{\mathbb R^2}q^2(\zeta)\mathrm d\zeta=1.$
For some constants \(C>0\) and \(\sigma>0\), independent of
\(a\) sufficiently close to \(a^*\),
\[
W_a(\zeta)\leq C e^{-\sigma |\zeta|}
\ \text{for }|\zeta|\ \text{sufficiently large}.
\]
We first consider the concentration region $B_{R\delta_a}(p_a)$.
Fix \(R>0\).  Since \(\rho_a=|p_a|\to1\), using the change of variables
$x=p_a+\delta_a \zeta,$
we obtain
\[
\begin{aligned}
&\int_{B_{R\delta_a}(p_a)}
\partial_{e_ae_a}V(x)U_a^2(x)\mathrm dx=
\int_{B_R(0)}
\partial_{e_ae_a}V(p_a+\delta_a \zeta)
W_a^2(\zeta)\mathrm d\zeta.
\end{aligned}
\]
Since \(W_a\to q\) strongly in \(L^2(\mathbb R^2)\), then \eqref{eqn:partial-e-a-e-a-V} gives
\begin{equation}
\int_{B_{R\delta_a}(p_a)}
\partial_{e_ae_a}V(x)U_a^2(x)\mathrm dx
\to
2\int_{B_R(0)}q^2(\zeta)\mathrm d\zeta
\label{eq:local-hessian-contribution}
\end{equation}
for every fixed \(R>0\).
For $\mathbb R^2\setminus B_{R\delta_a}(p_a)$,
we claim that
\begin{equation}
\lim_{R\to\infty}
\limsup_{a\nearrow a^*}
\int_{\mathbb R^2\setminus B_{R\delta_a}(p_a)}
\bigl|\partial_{e_ae_a}V(x)\bigr|
U_a^2(x)\mathrm dx
=0.
\label{eq:hessian-far-field contribution}
\end{equation}
Because of the possible singularity of \(D^2V\) at the origin, we
split the integral into the regions \(|x|<1/4\) and \(|x|\geq1/4\).
Since \(|p_a|\to1\), for \(a\) sufficiently close to \(a^*\), one has
$|p_a|\geq\frac34.$
Thus, if \(|x|<1/4\),
\[
|x-p_a|
\geq |p_a|-|x|
\geq\frac12.
\]
The uniform exponential decay of \(W_a\) in Lemma \ref{lem:whole} therefore yields
\[
\begin{aligned}
U_a(x)
=
\delta_a^{-1}
W_a\left(\frac{x-p_a}{\delta_a}\right)\leq
C\delta_a^{-1}
\exp\left(-\frac{\sigma}{2\delta_a}\right)
\ \text{for }|x|<\frac14.
\end{aligned}
\]
Using \eqref{eqn:V-upper},
we obtain
\begin{equation}\label{eq:sec23-083}
\begin{aligned}
\int_{|x|<1/4}
|\partial_{e_ae_a}V(x)|U_a^2(x)\mathrm dx
\leq
C\delta_a^{-2}e^{-\sigma/\delta_a}
\int_{|x|<1/4}
\left(1+\frac1{|x|}\right)\mathrm dx=o(1).
\end{aligned}
\end{equation}
On the other hand, for \(|x|\geq1/4\),
$|\partial_{e_ae_a}V(x)|\leq C.$
Hence
\[
\begin{aligned}
\int_{\{|x-p_a|>R\delta_a\}\cap\{|x|\geq1/4\}}
|\partial_{e_ae_a}V(x)|U_a^2(x)\mathrm dx
\leq
C\int_{|x-p_a|>R\delta_a}U_a^2(x)\mathrm dx
\ =
C\int_{|\zeta|>R}W_a^2(\zeta)\mathrm d\zeta.
\end{aligned}
\]
Since \(W_a\to q\) strongly in \(L^2(\mathbb R^2)\),
\[
\limsup_{a\nearrow a^*}
\int_{|\zeta|>R}W_a^2(\zeta)\mathrm d\zeta
\leq
\int_{|\zeta|>R}q^2(\zeta)\mathrm d\zeta.
\]
Letting \(R\to\infty\), we obtain
\eqref{eq:hessian-far-field contribution}.
Combining \eqref{eq:local-hessian-contribution} and
\eqref{eq:hessian-far-field contribution},
we arrive at
\[
\int_{\mathbb R^2}
\partial_{e_ae_a}V(x)U_a^2(x)\mathrm dx
\to
2\int_{\mathbb R^2}q^2(\zeta)\mathrm d\zeta
=2,
\]
which proves \eqref{eq:radial-hessian-limit}.

Recalling that $\frac{d}{db}U_{a,b}\Big|_{b=0}=Z_{1,a},$
the constrained second-variation formula yields
\[
\frac{d^2}{db^2}E_a^\infty(U_{a,b})\Big|_{b=0}
=
2\mathfrak q_{a,U_a}(Z_{1,a})=\delta_a^2
\int_{\mathbb R^2}
\partial_{e_ae_a}V(x)U_a^2(x)\mathrm dx.
\]
Consequently,
\begin{equation*}
\mathfrak q_{a,U_a}(Z_{1,a})
=
\delta_a^2+o(\delta_a^2).
\end{equation*}

\medskip
\noindent
\textbf{Step 2. Second variation in the scaling direction.}

Consider the mass-preserving scaling
\[
U_{a,\tau}(x)
:=
e^\tau
U_a\bigl(p_a+e^\tau(x-p_a)\bigr).
\]
Then
\[
\frac{d}{d\tau}U_{a,\tau}\Big|_{\tau=0}
=
Z_{2,a}.
\]
Set
\begin{equation}
\label{eq:sec23-087}
A_a
:=
\int_{\mathbb R^2}|\nabla U_a(x)|^2\mathrm dx
-
\frac a2\int_{\mathbb R^2}U_a^4(x)\mathrm dx.
\end{equation}
A change of variables gives
\begin{equation}
\label{eq:sec23-088}
E_a^\infty(U_{a,\tau})
=
e^{2\tau}A_a
+
\int_{\mathbb R^2}
V\bigl(p_a+e^{-\tau}(y-p_a)\bigr)U_a^2(y)\mathrm dy.
\end{equation}
As in the proof of \eqref{eqn:P-a-est}, Lemma~\ref{lem:whole} gives
\[
\int_{\mathbb R^2}V(x)U_a^2(x)\mathrm dx
=\delta_a^2\int_{\mathbb R^2}(e_a\cdot \zeta)^2q^2(\zeta)\mathrm d\zeta+o(\delta_a^2).
\]
Since $q$ is radial, after a rotation we may assume $e_a=e_1=(1,0)$.
Hence
\[
\delta_a^{-2}\int_{\mathbb R^2}V(x)U_a^2(x)\mathrm dx
\to
\int_{\mathbb R^2}\zeta_1^2q^2(\zeta)\mathrm d\zeta
=\frac{\lambda_0^4}{a^*}
\]
and
\begin{equation}
\label{eq:sec23-089}
A_a=e_{\infty}(a)-\int_{\mathbb R^2}V(x)U_a^2(x)\mathrm dx
=
\gamma\,\delta_a^2
+
o(\delta_a^2),
\
\gamma
:=
\int_{\mathbb R^2}\zeta_1^2q^2(\zeta)\mathrm d\zeta
=
\frac{\lambda_0^4}{a^*}.
\end{equation}
If \(\mathcal P_{a,U_a}(\tau)\) denotes the potential term in \eqref{eq:sec23-088}, then
\[
\mathcal P_{a,U_a}''(0)
=
\int_{\mathbb R^2}
\left[
(y-p_a)^TD^2V(y)(y-p_a)
+
\nabla V(y)\cdot(y-p_a)
\right]U_a^2(y)\mathrm dy.
\]
Let \(y=p_a+\delta_a\zeta\). Using Lemma~\ref{lem:whole} and arguing as in Step 1, we obtain
\[
\delta_a^{-2}\mathcal P_{a,U_a}''(0)
\to
4\int_{\mathbb R^2} \zeta_1^2q^2(\zeta)\mathrm d\zeta
=
4\gamma.
\]
 Hence
\[
\frac{d^2}{d\tau^2}
E_a^\infty(U_{a,\tau})\Big|_{\tau=0}
=
8\gamma\,\delta_a^2
+
o(\delta_a^2),
\]
and therefore
\begin{equation*}
\mathfrak q_{a,U_a}(Z_{2,a})
=
4\gamma\,\delta_a^2
+
o(\delta_a^2).
\end{equation*}

\medskip
\noindent
\textbf{Step 3. Coercivity modulo rotation, translation, and scaling directions.}

We first prove the unweighted estimate
\begin{equation}
\label{eq:sec23-093}
\mathfrak q_{a,U_a}(h)
\ge
c\delta_a^{-2}\|h\|_{*,a}^2
\end{equation}
under the four orthogonality conditions
\begin{equation}
\label{eq:sec23-094}
h\perp U_a,\
h\perp Z_{0,a},\
h\perp Z_{1,a},\
h\perp Z_{2,a}.
\end{equation}
Assume that \eqref{eq:sec23-093} is false. Then there are
\(a_k\nearrow a^*\), \(U_k\in\mathcal G_{a_k}\), and \(h_k\) satisfying the
four orthogonality conditions in \eqref{eq:sec23-094}. Write \(\delta_k:=\delta_{a_k}\), let \(p_k\) be a
global maximum point of \(U_k\), and set \(\mu_k:=\mu_{a_k}^\infty\). Then as $k\to\infty$,
\begin{equation}
\label{eq:sec23-095}
\|h_k\|_{*,a_k}=1,
\
\delta_k^2\mathfrak q_{a_k,U_k}(h_k)\to0.
\end{equation}
After rotation, we take
\[
p_k=|p_k|e_1.
\]
Define
\begin{equation*}
\eta_k(\zeta)
=
\delta_kh_k(p_k+\delta_k\zeta),
\
W_k(\zeta)
=
\delta_kU_k(p_k+\delta_k\zeta),
\end{equation*}
then $\|\eta_k\|_{H^1(\mathbb R^2)}=1.$
Passing to a subsequence, $\eta_k\rightharpoonup\eta_0
\ \text{weakly in }H^1(\mathbb R^2)$ as $k\to\infty$.
The orthogonality conditions and the blow-up convergence established in
Lemma~\ref{lem:whole} imply
\begin{equation}
\label{eq:sec23-099}
\langle\eta_0,q\rangle
=
\langle\eta_0,\partial_1q\rangle
=
\langle\eta_0,\partial_2q\rangle
=
\langle\eta_0,\Lambda q\rangle
=0.
\end{equation}
Moreover,
\[
\begin{aligned}
\delta_k^2\mathfrak q_{a_k,U_k}(h_k)
&=
\int_{\mathbb R^2}
\left(
|\nabla\eta_k(\zeta)|^2
-\mu_k\delta_k^2\eta_k^2(\zeta)
-3a_kW_k^2(\zeta)\eta_k^2(\zeta)
\right)\mathrm d\zeta+
\delta_k^2
\int_{\mathbb R^2}
V(p_k+\delta_k \zeta)\eta_k^2(\zeta)\mathrm d\zeta.
\end{aligned}
\]
Since \(\{\eta_k\}\) is bounded in \(H^1(\mathbb R^2)\), it is bounded in
\(L^4(\mathbb R^2)\), and consequently
\[
\int_{\mathbb R^2}
\bigl(a_kW_k^2(\zeta)-Q^2(\zeta)\bigr)\eta_k^2(\zeta)\,\mathrm d\zeta
\to0
\]
and
\[
Q\eta_k\to Q\eta_0
\
\text{strongly in }L^2(\mathbb R^2)
\]
as $k\to\infty.$
Hence
\[
\int_{\mathbb R^2}a_kW_k^2(\zeta)\eta_k^2(\zeta)\,\mathrm d\zeta
\to
\int_{\mathbb R^2}Q^2(\zeta)\eta_0^2(\zeta)\,\mathrm d\zeta.
\]
Together with \(-\mu_k\delta_k^2\to1\) as $k\to\infty$ and
\(\|\eta_k\|_{H^1(\mathbb R^2)}=1\), this gives
\begin{equation}
\label{eq:sec23-101}
\begin{aligned}
\liminf_{k\to\infty}
\delta_k^2\mathfrak q_{a_k,U_k}(h_k)
&\ge
1-3\int_{\mathbb R^2}Q^2(\zeta)\eta_0^2(\zeta)\mathrm d\zeta
\\
&=
\mathcal B(\eta_0,\eta_0)
+
1-\|\eta_0\|_{H^1(\mathbb R^2)}^2.
\end{aligned}
\end{equation}
By Lemma~\ref{lem:Qcoercive} and \eqref{eq:sec23-099},
\[
\mathcal B(\eta_0,\eta_0)
\ge
c_0\|\eta_0\|_{H^1(\mathbb R^2)}^2.
\]
Thus the right-hand side of \eqref{eq:sec23-101} is bounded below by
\[
\min\{1,c_0\}>0,
\]
contradicting \eqref{eq:sec23-095}. Namely, \eqref{eq:sec23-093} holds.

We now upgrade \eqref{eq:sec23-093} to the weighted estimate. Since $-\mu_a^\infty\delta_a^2\to1,$
we have \(\mu_a^\infty<0\) for \(a\) close to \(a^*\). Therefore
\begin{equation}
\label{eq:sec23-102}
\begin{aligned}
\int_{\mathbb R^2}V(x)h^2(x)\mathrm dx
&=
\mathfrak q_{a,U_a}(h)
-\|\nabla h\|_{L^2(\mathbb R^2)}^2
+\mu_a^\infty\|h\|_{L^2(\mathbb R^2)}^2
+3a\int_{\mathbb R^2}U_a^2(x)h^2(x)\mathrm dx
\\
&\le
\mathfrak q_{a,U_a}(h)
+
3a\int_{\mathbb R^2}U_a^2(x)h^2(x)\mathrm dx.
\end{aligned}
\end{equation}
By the Gagliardo-Nirenberg inequality,
\begin{equation}
\label{eq:sec23-103}
\|h\|_{L^4(\mathbb R^2)}^4
\le
C\|h\|_{L^2(\mathbb R^2)}^2\|\nabla h\|_{L^2(\mathbb R^2)}^2
\le
C\delta_a^{-2}\|h\|_{*,a}^4,
\end{equation}
and Lemma~\ref{lem:whole} implies $\|U_a\|_{L^4(\mathbb R^2)}
\le C\delta_a^{-1/2}.$
Hence
\begin{equation}
\label{eq:sec23-104}
\delta_a^2\int_{\mathbb R^2}U_a^2(x)h^2(x)\mathrm dx
\le
C\|h\|_{*,a}^2.
\end{equation}
Combining \eqref{eq:sec23-093}, \eqref{eq:sec23-102} and \eqref{eq:sec23-104},
\[
\delta_a^2\int_{\mathbb R^2}V(x)h^2(x)\mathrm dx
\le
C\delta_a^2\mathfrak q_{a,U_a}(h).
\]
Together with the definition of $\|h\|_{\#,a}$, we obtain
\[
\|h\|_{\#,a}^2
\le
C\delta_a^2\mathfrak q_{a,U_a}(h),
\]
which is exactly \eqref{eq:sec23-074}. This proves part $(i)$.

\medskip
\noindent
\textbf{Step 4. Exact rotation modulation.}

For \(s\in\mathbb R\), set
\[
U_{a,s}(x)
:=
U_a(R_{-\delta_a s}x),
\]
where
\[
R_{\vartheta}
:=
\begin{pmatrix}
\cos\vartheta & -\sin\vartheta\\
\sin\vartheta & \cos\vartheta
\end{pmatrix}
=e^{\vartheta J},
\
J:=
\begin{pmatrix}
0&-1\\
1&0
\end{pmatrix},
\]
so that \(R_{-\vartheta}=R_\vartheta^{-1}\).
Thus the rotation angle is \(\vartheta=\delta_a s\), and
\[
Z_{0,a,s}
:=
\partial_s U_{a,s}
=
-\delta_a(Jx)\cdot\nabla U_{a,s},
\
Z_{0,a,0}=Z_{0,a}.
\]
Since \(V\) is radial,
$U_{a,s}\in\mathcal G_a, \ \text{and}\
E_a^\infty(U_{a,s})=e_\infty(a).
$
Then
$$
\langle Z_{0,a},U_a\rangle=0
$$
and
 \[
\begin{aligned}
0
&=
\left.
\frac{d}{ds}
\left(
DE_a^\infty(U_{a,s})[h]
-
2\mu_a^\infty\langle U_{a,s},h\rangle
\right)
\right|_{s=0}
\\
&=
2\mathfrak q_{a,U_a}(Z_{0,a},h), \ h\in T_{U_a}.
\end{aligned}
\]

For any \(\widetilde U_a\in\mathcal G_a\)
and \(\xi\in T_{\widetilde U_a}\),
 define the mass-chart
\[
\mathcal C_{\widetilde U_a}(\xi)
:=
\frac{\widetilde U_a+\xi}
{\|\widetilde U_a+\xi\|_{L^2(\mathbb R^2)}}.
\]
For \(s\) near zero and  \(v\) sufficiently close to \(U_a\) in the sense of \(*\)-norm, define
\[
m(s,v):=\langle v,U_{a,s}\rangle,
\
\Xi(s,v):=\frac{v}{m(s,v)}-U_{a,s}.
\]
Then $\langle\Xi(s,v),U_{a,s}\rangle=0,$
and, as long as \(m(s,v)>0\),
\[
v
=
\mathcal C_{U_{a,s}}\bigl(\Xi(s,v)\bigr)
=
\frac{U_{a,s}+\Xi(s,v)}
{\|U_{a,s}+\Xi(s,v)\|_{L^2(\mathbb R^2)}}.
\]
Set
\[
\mathcal F_a(s,v)
:=
\langle\Xi(s,v),Z_{0,a,s}\rangle.
\]
At \((s,v)=(0,U_a)\), we can infer from \(\langle U_a,Z_{0,a}\rangle=0\) that
\[
\partial_s\Xi(0,U_a)=-Z_{0,a}.
\]
Therefore
\begin{equation}
\label{eq:rotation-modulation-derivative}
\partial_s\mathcal F_a(0,U_a)
=
-\|Z_{0,a}\|_{L^2(\mathbb R^2)}^2.
\end{equation}
We next verify that this derivative is uniformly separated from zero.
Recall that
\[
\delta_a Z_{0,a}(p_a+\delta_a\zeta)
=
-\bigl(Jp_a+\delta_aJ\zeta\bigr)\cdot\nabla W_a(\zeta)
\]
and $\delta_a Z_{0,a}(p_a+\delta_a\zeta)$ converges to $-\partial_2q$ in $L^2(\mathbb R^2)$.
 Hence
\[
\|Z_{0,a}\|_{L^2(\mathbb R^2)}^2
\to
\int_{\mathbb R^2}|\partial_2q(\zeta)|^2\mathrm d\zeta
=
\|\partial_2q\|_{L^2(\mathbb R^2)}^2>0.
\]
In particular, the derivative in
\eqref{eq:rotation-modulation-derivative} is uniformly invertible.

We next prove the uniform \(C^2\)-regularity needed to apply the implicit function theorem to \(\mathcal F_a(s,v)=0\).
First, we control the parameter derivatives of \(U_{a,s}\). For \(j=1,2,3\),
the derivative \(\partial_s^jU_{a,s}\) is a finite linear combination of terms
obtained by applying \(\delta_a(Jx)\cdot\nabla\) at most \(j\) times to
\(U_{a,s}\). After the change of variables \(x=p_a+\delta_a\zeta\), these
terms become polynomially weighted derivatives of \(W_a\) of order at most
\(j\). Hence the weighted derivative estimates in Lemma~\ref{lem:whole} give
\[
\sup_{|s|\le s_0}
\|\partial_s^jU_{a,s}\|_{L^2(\mathbb R^2)}
\le C,
\
j=1,2,3,
\]
for a fixed \(s_0>0\), uniformly in \(a\) and \(U_a\). In particular,
\(s\mapsto U_{a,s}\) is uniformly \(C^3\) as an
\(L^2(\mathbb R^2)\)-valued map.
Second, we transfer this regularity to the modulation map $\mathcal F_a$. Since
\(\mathcal F_a\) is formed only from \(L^2(\mathbb R^2)\)-pairings of \(v\),
\(U_{a,s}\), and \(Z_{0,a,s}=\partial_sU_{a,s}\), the preceding bound implies
that \((s,v)\mapsto\mathcal F_a(s,v)\) is uniformly \(C^2\) on a fixed
neighborhood of \((0,U_a)\). After shrinking this neighborhood if necessary,
\(m(s,v)\) is also uniformly bounded away from zero.
 By
\eqref{eq:rotation-modulation-derivative}, the derivative
\(\partial_s\mathcal F_a(0,U_a)\) is uniformly separated from zero. Therefore
the implicit function theorem (see, e.g., \cite{AmbrosettiProdi1993}) provides constants \(r_{\rm rot}>0\) and
\(C>0\), independent of \(a\) and \(U_a\), and a unique \(C^2\) function
\(s=s(v)\) when \(\|v-U_a\|_{*,a}<r_{\rm rot}\), such that
\begin{equation}
\label{eq:rotation-slice}
\left\langle
\Xi(s(v),v),Z_{0,a,s(v)}
\right\rangle=0.
\end{equation}
Moreover, \(|s(v)|\le C\|v-U_a\|_{*,a}\). Since the above modulation only uses
\(L^2(\mathbb R^2)\)-pairings and rotations preserve both the \(*\)- and
\(\#\)-norms, the same construction is valid in either neighborhood.

\medskip
\noindent
\textbf{Step 5. Lyapunov-Schmidt reduction in \(H_V^1(\mathbb R^2)\).}

For fixed
\(U_a\in\mathcal G_a\), set
\[
\mathcal X_{a,U_a}
:=
\left\{
h\in T_{U_a}:
\langle h,Z_{0,a}\rangle=0
\right\}.
\]
Define
\[
\widehat Z_{j,a}
:=
Z_{j,a}
-
\frac{\langle Z_{j,a},Z_{0,a}\rangle}
{\|Z_{0,a}\|_{L^2(\mathbb R^2)}^2}
Z_{0,a},
\
j=1,2,
\]
and
\[
\mathcal Z_{a,U_a}
=
\operatorname{span}
\{\widehat Z_{1,a},\widehat Z_{2,a}\},\
\mathcal H_{a,U_a}
:=
\left\{
h\in\mathcal X_{a,U_a}:
\langle h,\widehat Z_{1,a}\rangle
=
\langle h,\widehat Z_{2,a}\rangle
=0
\right\}.
\]
Note that the Gram matrix of
\(\{Z_{0,a},Z_{1,a},Z_{2,a}\}\) converges, after blow-up scaling, to the Gram matrix of
\(\{-\partial_2q,-\partial_1q,\Lambda q\}\), which is positive definite.
Consequently,
\[
\mathcal X_{a,U_a}
=
\mathcal Z_{a,U_a}\oplus\mathcal H_{a,U_a}
\]
with uniformly bounded \(L^2(\mathbb R^2)\)-orthogonal projections
onto
\(\mathcal Z_{a,U_a}\). Since the almost-degenerate directions $Z_{j,a}$ ($j=1,2$) have uniformly bounded
\(\#\)-norms and the projection coefficients are controlled by the
\(L^2(\mathbb R^2)\)-norm, these projections are also uniformly bounded in the
\(*\)- and \(\#\)-norms.
In particular, if
\[
z:=b\widehat Z_{1,a}+\tau\widehat Z_{2,a},
\]
then there exist constants $c_1,c_2>0$ such that
\begin{equation}
\label{eq:sec23-131}
c_1(b^2+\tau^2)
\le
\|z\|_{*,a}^2
\le
\|z\|_{\#,a}^2
\le
c_2(b^2+\tau^2).
\end{equation}
Indeed, let $z=b\widehat Z_{1,a}+\tau\widehat Z_{2,a}\in\mathcal Z_{a,U_a}$
and define
\[
\eta_z(y):=\delta_a z(p_a+\delta_a y),
\  y\in\mathbb R^2.
\]
The scaled versions of \(\widehat Z_{1,a}\) and \(\widehat Z_{2,a}\) converge
strongly in \(H^1(\mathbb R^2)\) to \(-\partial_1q\) and \(\Lambda q\),
respectively. Since these two limiting functions are linearly independent (in
fact \(L^2(\mathbb R^2)\)-orthogonal by parity), there exist constants
\(c_1,c_2>0\), independent of \(a\) and \(U_a\), such that
\[
c_1(b^2+\tau^2)
\le
\|\eta_z\|_{H^1(\mathbb R^2)}^2
=
\|z\|_{*,a}^2
\le
c_2(b^2+\tau^2).
\]
Moreover, the additional term in the \(\#\)-norm satisfies
\[
\delta_a^2\int_{\mathbb R^2}V(x)z^2(x)\mathrm dx
=
\delta_a^2\int_{\mathbb R^2}V(p_a+\delta_a y)\eta_z^2(y)\mathrm dy.
\]
Since \(|p_a|-1=o(\delta_a)\), the triangle
inequality gives the global bound
\[
\bigl||p_a+\delta_a y|-1\bigr|
\le
\bigl||p_a|-1\bigr|+\delta_a|y|
\le
C\delta_a(1+|y|),
\]
which signifies that
\[
V(p_a+\delta_a y)
\le
C\delta_a^2(1+|y|^2),
\  y\in\mathbb R^2.
\]
Meanwhile, the exponential estimate
\eqref{eqn:W-A-decay} implies
\[
\int_{\mathbb R^2}(1+|y|^2)\eta_z^2(y)\mathrm dy
\le C(b^2+\tau^2)\le
C\|\eta_z\|_{H^1(\mathbb R^2)}^2,
\]
with \(C\) independent of \(a\) and \(U_a\). Therefore
\[
\delta_a^2\int_{\mathbb R^2}V(x)z^2(x)\mathrm dx
\le
C\delta_a^4\|\eta_z\|_{H^1(\mathbb R^2)}^2
\]
and
\[
\|z\|_{\#,a}^2
=
\|z\|_{*,a}^2+O(\delta_a^4)\|z\|_{*,a}^2
=
\bigl(1+O(\delta_a^4)\bigr)\|z\|_{*,a}^2.
\]
Combining the above estimates, we conclude that \eqref{eq:sec23-131} holds.

Recall the mass-chart \(\mathcal C_{U_a}\) introduced in Step 4. For
\(\xi\in T_{U_a}\), define
\[
\mathscr E_{a,U_a}(\xi)
:=
E_a^\infty(\mathcal C_{U_a}(\xi))-e_\infty(a).
\]
Since the potential term is a continuous quadratic form on \(H_V^1(\mathbb R^2)\), while
\(H_V^1(\mathbb R^2)\hookrightarrow H^1(\mathbb R^2)\hookrightarrow L^p(\mathbb R^2)\) for every finite \(p\) in
dimension two, \(\mathscr E_{a,U_a}\) is \(C^\infty\) on a neighborhood of \(0\)
in \(H_V^1(\mathbb R^2)\). Moreover,
\[
D\mathscr E_{a,U_a}(0)=0
\]
and
\[
D^2\mathscr E_{a,U_a}(0)[h,k]
=
2\mathfrak q_{a,U_a}(h,k).
\]

In the following, for \(z\in\mathcal Z_{a,U_a}\), we seek
\(\Psi_{a,U_a}(z)\in\mathcal H_{a,U_a}\) satisfying
\begin{equation}
\label{eq:sec23-135}
D\mathscr E_{a,U_a}
\bigl(z+\Psi_{a,U_a}(z)\bigr)[\eta]
=
0,
\
\forall\,\eta\in\mathcal H_{a,U_a}.
\end{equation}
Set
\[
\mathcal N_{a,U_a}(z,h)[\eta]:
=
D\mathscr E_{a,U_a}(z+h)[\eta].
\]
Let \(D_h\) denote the Fr\'echet derivative with respect to the variable \(h\). Throughout Step 5, operator norms on
\(\mathcal H_{a,U_a}\) are taken with the \(\#\)-norm and the corresponding
dual norm on \(\mathcal H_{a,U_a}^*\).
First, we prove that the linear map
\[
A_{a,U_a}
:=
D_h\mathcal N_{a,U_a}(0,0)
:
\mathcal H_{a,U_a}\to \mathcal H_{a,U_a}^*
\]
is an isomorphism and
\begin{equation}
\label{eq:sec23-136}
\|A_{a,U_a}^{-1}\|
\le
C\delta_a^2.
\end{equation}
If \(h\in\mathcal H_{a,U_a}\), then
\(h\perp Z_{0,a}\) because \(\mathcal H_{a,U_a}\subset\mathcal X_{a,U_a}\),
and \(h\perp Z_{j,a}\) for \(j=1,2\) because
\(Z_{j,a}-\widehat Z_{j,a}\in\operatorname{span}\{Z_{0,a}\}\).
Thus all orthogonality conditions in Lemma~\ref{lem:uniformcoercive} $(i)$ hold,
and \eqref{eq:sec23-074} gives
\[
\mathfrak q_{a,U_a}(h,h)\ge c\delta_a^{-2}\|h\|_{\#,a}^2.
\]
The bilinear form associated with \(A_{a,U_a}\) is
\[
\langle A_{a,U_a}h,k\rangle
=2\mathfrak q_{a,U_a}(h,k),
\ h,k\in\mathcal H_{a,U_a}.
\]
It is continuous and coercive on \(\mathcal H_{a,U_a}\), with coercivity
constant \(2c\delta_a^{-2}>0\).
Moreover,
\begin{equation*}
\label{eq:sec23-137}
|\mathfrak q_{a,U_a}(h,k)|
\le
C\delta_a^{-2}
\|h\|_{\#,a}\|k\|_{\#,a},
\end{equation*}
which follows directly from \(-\mu_a^\infty=O(\delta_a^{-2})\)
and \eqref{eq:sec23-104}.
Therefore, by the Lax-Milgram theorem, \(A_{a,U_a}:\mathcal H_{a,U_a}\to
\mathcal H_{a,U_a}^*\) is an isomorphism and
\[
\|A_{a,U_a}^{-1}\|
\le
\frac{1}{2c}\delta_a^2.
\]

Second, we prove the
uniform nonlinear estimate
\begin{equation}
\label{eq:sec23-138}
\left\|
D_h\mathcal N_{a,U_a}(z,h)
-
A_{a,U_a}
\right\|
\le
C\delta_a^{-2}
\left(
|z|+\|h\|_{\#,a}
\right)
\end{equation}
when \(|z|+\|h\|_{\#,a}\le r\), with \(r>0\) fixed and small.
Here \(z\in\mathcal Z_{a,U_a}\) is a finite-dimensional reduced vector, and we write
\[
z=b\widehat Z_{1,a}+\tau\widehat Z_{2,a}.
\]
The quantity $|z|:=\sqrt{b^2+\tau^2}$ denotes
the Euclidean norm of the coefficient vector.
By \eqref{eq:sec23-131}, this norm is equivalent to \(\|z\|_{*,a}\) and \(\|z\|_{\#,a}\)
on the finite-dimensional space \(\mathcal Z_{a,U_a}\), that is,
\[
c_1|z|^2\le\|z\|_{*,a}^2\le\|z\|_{\#,a}^2\le c_2|z|^2.
\]
We now verify \eqref{eq:sec23-138} in detail. Since
\(D_h\mathcal N_{a,U_a}(z,h)=D^2\mathscr E_{a,U_a}(z+h)\) and
\(A_{a,U_a}=D^2\mathscr E_{a,U_a}(0)\), we estimate the difference of the
Hessians. Write \(\xi=z+h\). Because \(\xi\in T_{U_a}\) and
\(\|U_a\|_{L^2(\mathbb R^2)}=1\),
\[
\|U_a+\xi\|_{L^2(\mathbb R^2)}^2
=
1+\|\xi\|_{L^2(\mathbb R^2)}^2.
\]
Let
\[
\mathcal A(w)
:=
\int_{\mathbb R^2}
\bigl(|\nabla w(x)|^2+V(x)w^2(x)\bigr)\mathrm dx,
\
\mathcal A(k,\eta)
:=
\int_{\mathbb R^2}
\bigl(\nabla k(x)\cdot\nabla\eta(x)+V(x)k(x)\eta(x)\bigr)\mathrm dx
\]
and
\[
\mathcal Q(w)
:=
\int_{\mathbb R^2}|w(x)|^4\mathrm dx,
\]
then the mass-chart gives the exact formula
\[
\mathscr E_{a,U_a}(\xi)
=
\frac{\mathcal A(U_a+\xi)}{1+\|\xi\|_{L^2(\mathbb R^2)}^2}
-
\frac{a}{2}
\frac{\mathcal Q(U_a+\xi)}{
\bigl(1+\|\xi\|_{L^2(\mathbb R^2)}^2\bigr)^2}
-e_\infty(a).
\]
The quadratic form \(\mathcal A\) satisfies
\[
|\mathcal A(k,\eta)|
\le
C\delta_a^{-2}\|k\|_{\#,a}\|\eta\|_{\#,a},
\
\mathcal A(U_a)\le C\delta_a^{-2},
\]
and hence the product rule, together with
\(D\|\xi\|_{L^2(\mathbb R^2)}^2=O(\|\xi\|_{L^2(\mathbb R^2)})\), shows that
all Hessian differences coming from the normalization factor ${1+\|\xi\|_{L^2(\mathbb R^2)}^2}$ and the
quadratic part are bounded by
\[
C\delta_a^{-2}\|\xi\|_{\#,a}
\|k\|_{\#,a}\|\eta\|_{\#,a}
\]
when \(\|\xi\|_{\#,a}\) is small.
It remains to estimate the quartic part. Set
\[
G(\xi)=\mathcal Q(U_a+\xi)
=
\int_{\mathbb R^2}|U_a(x)+\xi(x)|^4\mathrm dx.
\]
Then
\[
D^2G(\xi)[k,\eta]-D^2G(0)[k,\eta]
=
12\int_{\mathbb R^2}\bigl[2U_a(x)\xi(x)+\xi^2(x)\bigr]k(x)\eta(x)\mathrm dx.
\]
By H\"older's inequality and \eqref{eq:sec23-103},
\[
\|U_a\|_{L^4(\mathbb R^2)}\le C\delta_a^{-1/2},\
\|w\|_{L^4(\mathbb R^2)}\le C\delta_a^{-1/2}\|w\|_{*,a}
\ \text{for }w\in H^1(\mathbb R^2),
\]
so
\[
\left|\int_{\mathbb R^2} U_a(x)\xi(x)k(x)\eta(x)\mathrm dx\right|
\le
C\delta_a^{-2}\|\xi\|_{*,a}\|k\|_{*,a}\|\eta\|_{*,a},
\]
and
\[
\left|\int_{\mathbb R^2} \xi^2(x)k(x)\eta(x)\mathrm dx\right|
\le
C\delta_a^{-2}\|\xi\|_{*,a}^2\|k\|_{*,a}\|\eta\|_{*,a}.
\]
Since \(\|\xi\|_{\#,a}\le C(|z|+\|h\|_{\#,a})\) by
\eqref{eq:sec23-131}, we conclude that
\[
\bigl|
(D_h\mathcal N_{a,U_a}(z,h)-A_{a,U_a})[k,\eta]
\bigr|
\le
C\delta_a^{-2}(|z|+\|h\|_{\#,a})
\|k\|_{\#,a}\|\eta\|_{\#,a}.
\]
Taking the supremum over \(\|k\|_{\#,a}=\|\eta\|_{\#,a}=1\) gives
\eqref{eq:sec23-138}.

Combining \eqref{eq:sec23-136} and \eqref{eq:sec23-138}, the product of the
inverse bound \(O(\delta_a^2)\) and the nonlinear Lipschitz bound
\(O(\delta_a^{-2}r)\) is \(O(r)\). Thus the smallness condition required by
the implicit function theorem (see, e.g., \cite{AmbrosettiProdi1993,Deimling1985}) can be chosen independently of \(a\) and
\(U_a\). We apply the implicit function theorem directly to
\[
\mathcal N_{a,U_a}:
\mathcal Z_{a,U_a}\times\mathcal H_{a,U_a}
\to
\mathcal H_{a,U_a}^*.
\]
There is a radius \(r_\#>0\), independent of \(a\) and \(U_a\), and a unique
\(C^2\)-map
\[
\Psi_{a,U_a}:
\mathbb B_{r_\#}^{\mathcal Z}(0)
\to
\mathcal H_{a,U_a},
\]
where for every \(r>0\),
\[
\mathbb B_{r}^{\mathcal Z}(0)
:=
\left\{
z=b\widehat Z_{1,a}+\tau\widehat Z_{2,a}\in\mathcal Z_{a,U_a}:
 b^2+\tau^2<r^2
\right\}.
\]
It satisfies \eqref{eq:sec23-135} and \(\Psi_{a,U_a}(0)=0\). Moreover, after reducing
\(r_\#\) if necessary,
\begin{equation}\label{eqn:Psi-bound}
\|\Psi_{a,U_a}(z)\|_{\#,a}\le C|z|,
\
z\in\mathbb B_{r_\#}^{\mathcal Z}(0).
\end{equation}
Indeed, the implicit function construction has a uniformly bounded first
derivative, so the mean-value theorem gives the above inequality.

Define the reduced energy
\[
f_{a,U_a}(z)
=
\mathscr E_{a,U_a}
\bigl(z+\Psi_{a,U_a}(z)\bigr).
\]
\medskip
\noindent
\textbf{Step 6. Uniform positivity of the reduced energy.}
\medskip

The goal of this step is to prove that there exist constants
\(c>0\) and \(r_0>0\), independent of \(a\) close to \(a^*\) and
\(U_a\in\mathcal G_a\), such that
\[
f_{a,U_a}(z)
\ge
c\delta_a^2|z|^2
\
\text{for }z\in\mathcal Z_{a,U_a},\ |z|\le r_0.
\]
Recall that \(\mathcal Z_{a,U_a}\) is generated by the radial-translation
and scaling directions.  We first construct an exact deformation whose
derivatives at the origin are precisely these two directions, compute its
energy to leading order, and then compare it with the reduced energy.

\medskip
\noindent
\emph{Step 6.1. Exact radial-translation and scaling deformation.}

Consider
\begin{equation}
\label{eq:sec23-141}
S_{a,U_a}(b,\tau)(x)
=
e^\tau
U_a\bigl(
p_a+e^\tau(x-p_a-b\delta_ae_a)
\bigr).
\end{equation}
This deformation preserves the \(L^2(\mathbb R^2)\)-norm and satisfies
\begin{equation}
\label{eq:sec23-142}
S_{a,U_a}(0,0)=U_a,
\
\partial_bS_{a,U_a}(0,0)=Z_{1,a},
\
\partial_\tau S_{a,U_a}(0,0)=Z_{2,a}.
\end{equation}
Thus the variables \(b\) and \(\tau\) describe, to first order, the
radial-translation and scaling directions identified in Steps 1-2.

Set \(A_a\) as in \eqref{eq:sec23-087}.  With the change of variables
\[
x
=
p_a+b\delta_ae_a+e^{-\tau}(y-p_a),
\]
we obtain the exact identity
\begin{equation}
\label{eq:sec23-143}
E_a^\infty(S_{a,U_a}(b,\tau))
=
e^{2\tau}A_a
+
\int_{\mathbb R^2}
V\left(
p_a+b\delta_ae_a+e^{-\tau}(y-p_a)
\right)
U_a^2(y)\mathrm dy.
\end{equation}

Recall from Lemma~\ref{lem:whole} that
\[
r_a=\frac{|p_a|-1}{\delta_a}\to0.
\]
 After rotating the coordinates,
we may assume \(e_a=e_1\).  With
\[
y=p_a+\delta_a\zeta,
\]
we have, uniformly for \((b,\tau)\) in a compact neighborhood of
\((0,0)\) and \(|\zeta|\le R\),
\begin{equation}
\label{eq:sec23-144}
\frac{
\left|
p_a+b\delta_ae_a+e^{-\tau}\delta_a\zeta
\right|-1
}{
\delta_a
}
=
r_a+b+e^{-\tau}\zeta_1
+
O(\delta_a|\zeta|^2).
\end{equation}
Using \(W_a\to q\) strongly in \(H^1(\mathbb R^2)\),
\eqref{eq:sec23-081} and the exponential estimates in
Lemma~\ref{lem:whole}, we obtain
\[
\begin{aligned}
&\delta_a^{-2}
\int_{\mathbb R^2}
V\left(
p_a+b\delta_ae_a+e^{-\tau}(y-p_a)
\right)
U_a^2(y)\mathrm dy
\\
&\ \to
\int_{\mathbb R^2}
\left(b+e^{-\tau}\zeta_1\right)^2q^2(\zeta)\,d\zeta
\\
&\ =
b^2
+
e^{-2\tau}
\int_{\mathbb R^2}\zeta_1^2q^2(\zeta)\,d\zeta
\\
&\ =
b^2+\gamma e^{-2\tau},
\end{aligned}
\]
where
$
\gamma
=
\int_{\mathbb R^2}\zeta_1^2q^2(\zeta)\,d\zeta
=
\frac{\lambda_0^4}{a^*}.
$
Moreover, by \eqref{eq:sec23-089},
$
A_a=\gamma\delta_a^2+o(\delta_a^2),
\
e_\infty(a)=2\gamma\delta_a^2+o(\delta_a^2).
$
Substituting these relations into \eqref{eq:sec23-143} gives
\begin{equation}
\label{eq:sec23-145}
\delta_a^{-2}
\left[
E_a^\infty(S_{a,U_a}(b,\tau))-e_\infty(a)
\right]
\to
\Phi(b,\tau)
\end{equation}
locally uniformly, where
\[
\Phi(b,\tau)
=
b^2
+
\gamma
\left(
e^{-2\tau}+e^{2\tau}-2
\right).
\]

The convergence in \eqref{eq:sec23-145} is in fact local \(C^2\).
Indeed, differentiate \eqref{eq:sec23-143} at most twice with respect to
\(b\) and \(\tau\).  On every bounded \(\zeta\)-region, the required
limits follow from \eqref{eq:sec23-144}.  Near \(x=0\), the possible
\(1/|x|\)-singularity of \(D^2V\) is multiplied by an exponentially small
factor, exactly as in \eqref{eq:sec23-083}.  On the remaining region, the
differentiated integrands are uniformly controlled by the exponential
decay in Lemma~\ref{lem:whole}.  Hence, if
\[
\Phi_{a,U_a}^S(b,\tau)
:=
\delta_a^{-2}
\left[
E_a^\infty(S_{a,U_a}(b,\tau))-e_\infty(a)
\right],
\]
then, for every compact \(K\subset\mathbb R^2\),
\[
\sup_{(b,\tau)\in K}
\left|
\partial^\alpha\Phi_{a,U_a}^S(b,\tau)
-
\partial^\alpha\Phi(b,\tau)
\right|
\to0,
\
|\alpha|\le2.
\]
In particular,
\[
D^2\Phi(0,0)
=
\begin{pmatrix}
2&0\\
0&8\gamma
\end{pmatrix}
>0.
\]
Thus the energy $E_a^\infty(S_{a,U_a}(b,\tau))-e_\infty(a)$ is
strictly positive to second order at the scale \(\delta_a^2\).

\medskip
\noindent
\emph{Step 6.2. Relation with the reduced coordinates.}

The reduction in Step 5 is carried out in
\[
\mathcal X_{a,U_a}
=
\{h\in T_{U_a}:\langle h,Z_{0,a}\rangle=0\},
\]
whereas \(S_{a,U_a}(b,\tau)\) does not necessarily satisfy this
orthogonality condition. We therefore first remove its rotational component
by the modulation constructed in Step 4.

Let $s=s(S_{a,U_a}(b,\tau))$
be the scaled rotation parameter given by Step 4, and define
\[
\widetilde S_{a,U_a}(b,\tau)(x)
:=
S_{a,U_a}(b,\tau)\bigl(R_{\delta_as}x\bigr).
\]
By rotation invariance,
\[
E_a^\infty(\widetilde S_{a,U_a}(b,\tau))
=
E_a^\infty(S_{a,U_a}(b,\tau)).
\]
Moreover,
$\widetilde S_{a,U_a}(0,0)=U_a$
and the mass-chart
coordinate
\[
\xi_{a,U_a}^S(b,\tau)
:=
\mathcal C_{U_a}^{-1}
\bigl(\widetilde S_{a,U_a}(b,\tau)\bigr)
\]
satisfies
$
\xi_{a,U_a}^S(b,\tau)\in\mathcal X_{a,U_a}.
$
Indeed, let \(v=S_{a,U_a}(b,\tau)\), \(s=s(v)\) and \(\theta=\delta_a s\), then
$$
\xi_{a,U_a}^S(b,\tau)(x)
=\left(\frac{v}{\langle v,U_{a,s}\rangle}-U_{a,s}\right)(R_\theta x)
=\Xi(s,v)(R_\theta x).
$$
Hence \(\langle \xi_{a,U_a}^S,U_a\rangle=0\), which, together with \(Z_{0,a,s}(R_\theta x)=Z_{0,a}(x)\) and \eqref{eq:rotation-slice}, implies that

$$
\langle \xi_{a,U_a}^S,Z_{0,a}\rangle
=\langle \Xi(s,v),Z_{0,a,s}\rangle=0.
$$
Therefore \(\xi_{a,U_a}^S(b,\tau)\in\mathcal X_{a,U_a}\).
Decompose
\[
\xi_{a,U_a}^S(b,\tau)
=
z_{a,U_a}^S(b,\tau)
+
h_{a,U_a}^S(b,\tau),
\]
where
\[
z_{a,U_a}^S(b,\tau)\in\mathcal Z_{a,U_a},
\
h_{a,U_a}^S(b,\tau)\in\mathcal H_{a,U_a}.
\]

 Now, we prove that
\[
(b,\tau)\longmapsto z_{a,U_a}^S(b,\tau)
\]
is a uniform local \(C^2\)-diffeomorphism. First, we establish the uniform
\(C^2\)-regularity needed for the inverse function theorem (see, e.g., \cite{AmbrosettiProdi1993,Deimling1985}) and then compute
the differential at the origin.

Specifically, we first claim that the deformation
\[
(b,\tau)\longmapsto S_{a,U_a}(b,\tau)
\]
is uniformly \(C^2\) in the \(\#\)-norm. Indeed,
differentiating at most twice with respect to \(b,\tau\) and then
taking one spatial derivative produces only finitely many polynomially
weighted derivatives of \(U_a\) of order at most three. After the change
of variables $x=p_a+\delta_a\zeta,$
these become polynomially weighted derivatives of \(W_a\), which are
uniformly bounded by Lemma~\ref{lem:whole}. The potential part is controlled by
$
V\left(
p_a+\delta_a(be_a+e^{-\tau}\zeta)
\right)
\le
C\delta_a^2(1+|\zeta|^2)
$
for \((b,\tau)\) in a fixed neighborhood of \((0,0)\).
Hence the first and second derivatives of $S_{a,U_a}$ are uniformly bounded in
the \(\#\)-norm and the claim holds. On the other hand,
Step 4 gives the corresponding uniform \(C^2\)-bounds for the
modulation parameter \(s\). Therefore \(\widetilde S_{a,U_a}(b,\tau)\) is also uniformly \(C^2\) in
\((b,\tau)\).
Moreover,
recall that
\[
\xi_{a,U_a}^S(b,\tau)
=
\mathcal C_{U_a}^{-1}\bigl(\widetilde S_{a,U_a}(b,\tau)\bigr)
=
\frac{\widetilde S_{a,U_a}(b,\tau)}
{\langle \widetilde S_{a,U_a}(b,\tau),U_a\rangle}
-
U_a
\]
and
$
\widetilde S_{a,U_a}(0,0)=U_a.
$
After shrinking the parameter neighborhood if necessary,
\(\langle \widetilde S_{a,U_a}(b,\tau),U_a\rangle\) stays uniformly
away from zero. Hence
\[
(b,\tau)\longmapsto\xi_{a,U_a}^S(b,\tau)
\]
is uniformly \(C^2\). Since the projection
\(P_{\mathcal Z_{a,U_a}}\) is uniformly bounded,
\[
z_{a,U_a}^S
=
P_{\mathcal Z_{a,U_a}}\xi_{a,U_a}^S
\]
is uniformly \(C^2\) as well.

We next compute the differential of $z_{a,U_a}^S(b,\tau)$ at the origin. Set
\[
s_b
:=
\partial_b
\bigl[
s(S_{a,U_a}(b,\tau))
\bigr]\Big|_{(0,0)},
\
s_\tau
:=
\partial_\tau
\bigl[
s(S_{a,U_a}(b,\tau))
\bigr]\Big|_{(0,0)}.
\]
Since
\[
\frac{d}{d\vartheta}R_\vartheta x
\Big|_{\vartheta=0}
=
Jx
\]
and
\[
Z_{0,a}(x)
=
-\delta_a(Jx)\cdot\nabla U_a(x),
\]
we obtain
\[
\begin{aligned}
\partial_b\widetilde S_{a,U_a}(0,0)
&=
\partial_bS_{a,U_a}(0,0)
+
\delta_as_b(Jx)\cdot\nabla U_a(x)=
Z_{1,a}-s_bZ_{0,a}.
\end{aligned}
\]
Similarly,
$
\partial_\tau\widetilde S_{a,U_a}(0,0)
=
Z_{2,a}-s_\tau Z_{0,a}.$
Now, we pass from \(\widetilde S_{a,U_a}\) to its mass-chart coordinate.
For
\[
\mathcal C_{U_a}^{-1}(v)
=
\frac{v}{\langle v,U_a\rangle}-U_a,
\]
a direct differentiation at \(v=U_a\) gives
\[
D\mathcal C_{U_a}^{-1}(U_a)[w]
=
w-\langle w,U_a\rangle U_a.
\]
Since the deformation preserves the \(L^2(\mathbb R^2)\)-mass,
\[
\partial_b\widetilde S_{a,U_a}(0,0),
\
\partial_\tau\widetilde S_{a,U_a}(0,0)
\in T_{U_a}.
\]
Therefore
$
D\mathcal C_{U_a}^{-1}(U_a)[w]=w
$
for these two vectors, and then
\begin{equation}\label{eqn:partial-Xi}
\partial_b\xi_{a,U_a}^S(0,0)
=
Z_{1,a}-s_bZ_{0,a},\
\partial_\tau\xi_{a,U_a}^S(0,0)
=
Z_{2,a}-s_\tau Z_{0,a}.
\end{equation}
On the other hand,
by the orthogonality condition,
\[
\left\langle
\xi_{a,U_a}^S(b,\tau),Z_{0,a}
\right\rangle
=
0
\]
for all \((b,\tau)\) sufficiently close to \((0,0)\).
Differentiating this identity with respect to \(b\) at \((0,0)\) gives
\[
0
=
\left\langle
\partial_b\xi_{a,U_a}^S(0,0),Z_{0,a}
\right\rangle,
\]
which, together with \eqref{eqn:partial-Xi}, implies that
\[
0
=
\left\langle
Z_{1,a}-s_bZ_{0,a},Z_{0,a}
\right\rangle
=
\langle Z_{1,a},Z_{0,a}\rangle
-
s_b\|Z_{0,a}\|_{L^2(\mathbb R^2)}^2.
\]
Recalling that $\|Z_{0,a}\|_{L^2(\mathbb R^2)}
\to
\|\partial_2q\|_{L^2(\mathbb R^2)}>0,$
we obtain
\[
s_b
=
\frac{\langle Z_{1,a},Z_{0,a}\rangle}
{\|Z_{0,a}\|_{L^2(\mathbb R^2)}^2}.
\]
The same argument with respect to \(\tau\) gives
$
0
=
\left\langle
Z_{2,a}-s_\tau Z_{0,a},Z_{0,a}
\right\rangle,
$
and hence
\[
s_\tau
=
\frac{\langle Z_{2,a},Z_{0,a}\rangle}
{\|Z_{0,a}\|_{L^2(\mathbb R^2)}^2}.
\]
Thus, by the definition of \(\widehat Z_{j,a}\),
\[
\partial_b\xi_{a,U_a}^S(0,0)
=
\widehat Z_{1,a},
\
\partial_\tau\xi_{a,U_a}^S(0,0)
=
\widehat Z_{2,a}.
\]
Since both vectors already belong to \(\mathcal Z_{a,U_a}\),
applying \(P_{\mathcal Z_{a,U_a}}\) yields
\begin{equation}
\label{eq:reduced-chart-derivative}
Dz_{a,U_a}^S(0,0)[\beta_1,\beta_2]
=
\beta_1\widehat Z_{1,a}
+
\beta_2\widehat Z_{2,a}.
\end{equation}
After blow-up scaling,
\[
\widehat Z_{1,a}\to-\partial_1q,
\
\widehat Z_{2,a}\to\Lambda q
\]
strongly in \(H^1(\mathbb R^2)\).
Since the two limiting functions are linearly independent,
their Gram matrix is positive definite. Hence there exist constants
\(c,C>0\), independent of \(a\) and \(U_a\), such that
\[
c(\beta_1^2+\beta_2^2)
\le
\|\beta_1\widehat Z_{1,a}
+\beta_2\widehat Z_{2,a}\|_{*,a}^2
\le
C(\beta_1^2+\beta_2^2).
\]
Therefore
\[
Dz_{a,U_a}^S(0,0):
\mathbb R^2\to\mathcal Z_{a,U_a}
\]
is an isomorphism whose inverse norm is uniformly bounded.

Finally, the inverse function theorem yields there are
\[
r_{\rm par},r_{\rm reduced}>0,
\]
independent of \(a\) and \(U_a\), such that
\[
(b,\tau)\longmapsto z_{a,U_a}^S(b,\tau)
\]
is a \(C^2\)-diffeomorphism from \(B_{r_{\rm par}}(0)\) onto its image.
Its local inverse has uniformly bounded \(C^1\)-norm, and
\[
\mathbb B_{r_{\rm reduced}}^{\mathcal Z}(0)
\subset
z_{a,U_a}^S\bigl(B_{r_{\rm par}}(0)\bigr).
\]

\medskip
\noindent
\emph{Step 6.3. Comparison with the Lyapunov-Schmidt correction.}

For brevity, write
\[
p=(b,\tau),
\
z^S(p)=z_{a,U_a}^S(b,\tau),
\
h^S(p)=h_{a,U_a}^S(b,\tau),
\]
and set
$
r:=|b|+|\tau|.
$
The mass-chart coordinate of the exact deformation is
$\xi_{a,U_a}^S(p)=z^S(p)+h^S(p),$
whereas the Lyapunov-Schmidt graph over the same reduced coordinate is
\[
z^S(p)+\Psi_{a,U_a}(z^S(p)).
\]
Accordingly, define the difference between the two
\(\mathcal H_{a,U_a}\)-components by
\[
d(p)
:=
\Psi_{a,U_a}(z^S(p))-h^S(p)
\in\mathcal H_{a,U_a},
\]
and define the residual of the exact deformation by
\[
g(p)
:=
\mathcal N_{a,U_a}(z^S(p),h^S(p))
\in\mathcal H_{a,U_a}^*.
\]
The argument proceeds in four stages: we first estimate the residual
\(g(p)\), then solve for \(d(p)\), next differentiate the equation for
\(d(p)\), and finally compare the two energies.

\smallskip
\noindent
\emph{(a) The residual in the complementary equation.}

If
$
y
=
p_a+e^\tau(x-p_a-b\delta_ae_a),
$
then the equation satisfied by \(U_a\) gives
\begin{equation}
\label{eq:sec23-147}
\begin{aligned}
&-\Delta S_{a,U_a}
+
VS_{a,U_a}
-
aS_{a,U_a}^3
-
\mu_a^\infty e^{2\tau}S_{a,U_a}
=
\left[
V(x)-e^{2\tau}V(y)
\right]S_{a,U_a}.
\end{aligned}
\end{equation}
If
$
\eta\in H_V^1(\mathbb R^2),
\
\langle\eta,S_{a,U_a}\rangle=0,
$
then the Lagrange-multiplier term vanishes after pairing
\eqref{eq:sec23-147} with \(\eta\). Hence
\[
DE_a^\infty(S_{a,U_a})[\eta]
=
2\int_{\mathbb R^2}
\left[
V(x)-e^{2\tau}V(y)
\right]
S_{a,U_a}(x)\eta(x)\,\mathrm dx.
\]
Introduce the blow-up variable \(\zeta\) by
\[
y=p_a+\delta_a\zeta,
\
x=p_a+\delta_a(be_a+e^{-\tau}\zeta).
\]
For \(r\) sufficiently small,
\[
V(x)-e^{2\tau}V(y)
=
O\left(
\delta_a^2r(1+|\zeta|^2)
\right)
\]
on bounded \(\zeta\)-sets.
Define
$
\widetilde\eta(\zeta)
:=
\delta_a
\eta\left(
p_a+\delta_a(be_a+e^{-\tau}\zeta)
\right).
$
If \(\|\eta\|_{\#,a}=1\), then
$
\|\widetilde\eta\|_{H^1(\mathbb R^2)}
\le C.
$
Using the exponential decay in Lemma~\ref{lem:whole}, we obtain
\[
\begin{aligned}
|DE_a^\infty(S_{a,U_a})[\eta]|
&\le
C\delta_a^2r
\int_{\mathbb R^2}
(1+|\zeta|^2)e^{-\sigma|\zeta|}
|\widetilde\eta(\zeta)|\,\mathrm d\zeta
\\
&\le
C\delta_a^2r.
\end{aligned}
\]
Consequently,
\begin{equation}
\label{eq:sec23-148}
\sup_{\substack{
\eta\in H_V^1(\mathbb R^2),\
\|\eta\|_{\#,a}=1,\\
\langle\eta,S_{a,U_a}\rangle=0
}}
\left|
DE_a^\infty(S_{a,U_a}(b,\tau))[\eta]
\right|
\le
C\delta_a^2r.
\end{equation}
By \eqref{eq:sec23-148}, together with rotation invariance and the
uniform \(C^2\)-regularity of the mass-chart established in Step~6.2, we have
\begin{equation}
\label{eqn:g-P}
\|g(p)\|
\le
C\delta_a^2r.
\end{equation}

We also need to estimate the derivatives of the residual with respect to the parameter \(p=(b,\tau)\) up to second order. In the
rescaled variables, set
\[
B_a(p,\zeta)
:=
V\left(
p_a+\delta_a(be_a+e^{-\tau}\zeta)
\right)
-
e^{2\tau}V(p_a+\delta_a\zeta).
\]
For every multi-index \(\alpha\) with \(1\le|\alpha|\le2\), differentiation
with respect to \(p=(b,\tau)\) gives, on bounded \(\zeta\)-sets,
\[
|\partial_p^\alpha B_a(p,\zeta)|
\le
C\delta_a^2(1+|\zeta|^2).
\]
The remaining region, including the possible singularity of \(D^2V\) at
the origin, is controlled by the exponential estimates in
Lemma~\ref{lem:whole}.
Together with the uniform \(C^2\)-bounds from
Step~6.2, this yields
\begin{equation}
\label{eqn:g-P-derivatives}
\|\partial_p^\alpha g(p)\|
\le
C\delta_a^2,
\
1\le|\alpha|\le2.
\end{equation}
Moreover, Step 6.2 gives
\begin{equation}
\label{eqn:exact-coordinate-derivatives}
\|\partial_p^\alpha z^S(p)\|_{\#,a}
+
\|\partial_p^\alpha h^S(p)\|_{\#,a}
\le
C,
\
1\le|\alpha|\le2.
\end{equation}
Since \(z^S(0)=h^S(0)=0\), the mean-value theorem yields
\begin{equation}
\label{eqn:exact-coordinate-size}
|z^S(p)|
+
\|h^S(p)\|_{\#,a}
\le
Cr.
\end{equation}
By \eqref{eqn:Psi-bound} and \eqref{eqn:exact-coordinate-size},
\begin{equation}
\label{eqn:Psi-size-along-exact}
\|\Psi_{a,U_a}(z^S(p))\|_{\#,a}
\le
C|z^S(p)|
\le
Cr.
\end{equation}

\smallskip
\noindent
\emph{(b) Solving for the difference \(d(p)\).}

For \(0\le t\le1\), set
$
h_t(p)
:=
h^S(p)+t\,d(p).
$
Thus
\[
h_0(p)=h^S(p),
\
h_1(p)=\Psi_{a,U_a}(z^S(p)).
\]
By \eqref{eqn:exact-coordinate-size} and
\eqref{eqn:Psi-size-along-exact},
\[
|z^S(p)|+\|h_t(p)\|_{\#,a}
\le
Cr
\]
uniformly for \(t\in[0,1]\).
Recall that
$
A_{a,U_a}
=
D_h\mathcal N_{a,U_a}(0,0).
$
By \eqref{eq:sec23-136},
$\|A_{a,U_a}^{-1}\|
\le
C\delta_a^2.$ Then
\eqref{eq:sec23-138} gives
\begin{equation}
\label{eqn:D-N-A}
\left\|
D_h\mathcal N_{a,U_a}(z^S,h_t)
-
A_{a,U_a}
\right\|
\le
C\delta_a^{-2}
\left(
|z^S|+\|h_t\|_{\#,a}
\right)
\le
C\delta_a^{-2}r.
\end{equation}
Consequently,
\[
\left\|
A_{a,U_a}^{-1}
\left[
D_h\mathcal N_{a,U_a}(z^S,h_t)-A_{a,U_a}
\right]
\right\|
\le
Cr.
\]
After shrinking the parameter neighborhood so that \(Cr\le1/2\),
the standard Neumann-series criterion shows that
\(D_h\mathcal N_{a,U_a}(z^S,h_t)\) is invertible for every
\(t\in[0,1]\), with
\begin{equation}
\label{eqn:segment-inverse}
\left\|
D_h\mathcal N_{a,U_a}(z^S,h_t)^{-1}
\right\|
\le
C\delta_a^2.
\end{equation}

To compare the two endpoints of $h_t(p)$, define the averaged operator
\[
\overline L(p)
:=
\int_0^1
D_h\mathcal N_{a,U_a}
\left(
z^S(p),h^S(p)+td(p)
\right)
\,\mathrm dt.
\]
By \eqref{eqn:D-N-A},
$
\|\overline L(p)-A_{a,U_a}\|
\le
C\delta_a^{-2}r,
$
and hence
\[
\left\|
A_{a,U_a}^{-1}
\left(
\overline L(p)-A_{a,U_a}
\right)
\right\|
\le
Cr
\le
\frac12.
\]
Thus \(\overline L(p)\) is invertible and
\begin{equation}
\label{eqn:averaged-inverse}
\|\overline L(p)^{-1}\|
\le
C\delta_a^2.
\end{equation}

Note that
\[
\mathcal N_{a,U_a}
\left(
z^S(p),\Psi_{a,U_a}(z^S(p))
\right)
=0,
\
\mathcal N_{a,U_a}(z^S(p),h^S(p))
=g(p).
\]
The Banach-space fundamental theorem of calculus therefore gives
\[
\begin{aligned}
-g(p)
&=
\mathcal N_{a,U_a}(z^S,h^S+d)
-
\mathcal N_{a,U_a}(z^S,h^S)
\\
&=
\int_0^1
D_h\mathcal N_{a,U_a}
\left(
z^S,h^S+td
\right)[d]
\,\mathrm dt
\\
&=
\overline L(p)d(p).
\end{aligned}
\]
Combining this identity with \eqref{eqn:g-P} and
\eqref{eqn:averaged-inverse}, we obtain
\begin{equation}
\label{eqn:d-P}
\|d(p)\|_{\#,a}
\le
C\delta_a^4r.
\end{equation}
Equivalently,
\[
\left\|
\Psi_{a,U_a}(z_{a,U_a}^S(b,\tau))
-
h_{a,U_a}^S(b,\tau)
\right\|_{\#,a}
\le
C\delta_a^4r.
\]

\smallskip
\noindent
\emph{(c) Parameter derivatives of \(d(p)\).}

Before differentiating the equation for \(d(p)\), we record the uniform
bounds on the derivatives of $\mathscr E_{a,U_a}$ that will be used below.
For \(k\ge2\), the norm \(\|D^k\mathscr E_{a,U_a}(\xi)\|\) denotes the
operator norm of the \(k\)-linear form with respect to the \(\#\)-norm in
each argument.

Recall from Step 5 that, for \(\xi\in T_{U_a}\),
\[
\mathscr E_{a,U_a}(\xi)
=
\frac{\mathcal A(U_a+\xi)}{1+\|\xi\|_{L^2(\mathbb R^2)}^2}
-
\frac a2
\frac{\mathcal Q(U_a+\xi)}{
\bigl(1+\|\xi\|_{L^2(\mathbb R^2)}^2\bigr)^2}
-
e_\infty(a),
\]
where
\[
\mathcal A(w)
=
\int_{\mathbb R^2}
\bigl(|\nabla w|^2+Vw^2\bigr),
\
\mathcal Q(w)
=
\int_{\mathbb R^2}|w|^4.
\]
Set
$R(\xi):=1+\|\xi\|_{L^2(\mathbb R^2)}^2.$
On a fixed small \(\#\)-ball in the sense that $\|\xi\|_{\#,a}$ is small, \(R(\xi)\) is bounded above and below by
positive constants, and the derivatives of \(R^{-1}\) and \(R^{-2}\) up to
order four are uniformly bounded. This follows from
\[
DR(\xi)[\eta]=2\langle\xi,\eta\rangle,
\
D^2R(\xi)[\eta_1,\eta_2]
=2\langle\eta_1,\eta_2\rangle,
\]
with all higher derivatives of \(R\) equal to zero.

For the quadratic part, if
$
\mathcal A(\eta_1,\eta_2)
=
\int_{\mathbb R^2}
\bigl(
\nabla\eta_1\cdot\nabla\eta_2
+V\eta_1\eta_2
\bigr),
$
then the definition of the \(\#\)-norm gives
\[
|\mathcal A(\eta_1,\eta_2)|
\le
\delta_a^{-2}
\|\eta_1\|_{\#,a}\|\eta_2\|_{\#,a}.
\]
Moreover, Lemma~\ref{lem:whole} gives
$
\mathcal A(U_a)\le C\delta_a^{-2}.
$
Since \(\mathcal A(U_a+\xi)\) is quadratic in \(\xi\), these two bounds
control the function itself and all of its nonzero derivatives by
\(C\delta_a^{-2}\) on a fixed small \(\#\)-ball.
For the quartic part, the derivatives have the explicit form
\[
D^j\mathcal Q(U_a+\xi)
[\eta_1,\ldots,\eta_j]
=
\frac{4!}{(4-j)!}
\int_{\mathbb R^2}
(U_a+\xi)^{4-j}
\eta_1\cdots\eta_j,
\
0\le j\le4.
\]
By \eqref{eq:sec23-103} and Lemma~\ref{lem:whole},
\[
\|\eta\|_{L^4(\mathbb R^2)}
\le
C\delta_a^{-1/2}\|\eta\|_{\#,a},
\
\|U_a\|_{L^4(\mathbb R^2)}
\le
C\delta_a^{-1/2}.
\]
Hence, on a fixed small \(\#\)-ball,
\[
\|U_a+\xi\|_{L^4(\mathbb R^2)}
\le
C\delta_a^{-1/2},
\]
and H\"older's inequality yields
\[
\left|
D^j\mathcal Q(U_a+\xi)
[\eta_1,\ldots,\eta_j]
\right|
\le
C\delta_a^{-2}
\prod_{\ell=1}^j
\|\eta_\ell\|_{\#,a},
\
0\le j\le4.
\]
Applying the product rule to the exact mass-chart formula therefore gives
\begin{equation}
\label{eqn:energy-C4-bound}
\|D^k\mathscr E_{a,U_a}(\xi)\|
\le
C\delta_a^{-2},
\
2\le k\le4,
\end{equation}
uniformly on a fixed small \(\#\)-ball. Since
\[
\mathcal N_{a,U_a}(z,h)[\eta]
=
D\mathscr E_{a,U_a}(z+h)[\eta],
\]
\eqref{eqn:energy-C4-bound} also implies
\begin{equation}
\label{eqn:N-C3-bound}
\|D_{(z,h)}^j\mathcal N_{a,U_a}(z,h)\|
\le
C\delta_a^{-2},
\
1\le j\le3,
\end{equation}
when \(|z|+\|h\|_{\#,a}\) is sufficiently small.

Now, we define
\[
\mathcal F(p,d)
:=
\mathcal N_{a,U_a}
\left(
z^S(p),h^S(p)+d
\right),
\]
then
\[
\mathcal F(p,d(p))=0,
\
\mathcal F(p,0)=g(p).
\]
Set
\[
L(p)
:=
D_d\mathcal F(p,d(p))
=
D_h\mathcal N_{a,U_a}
\left(
z^S(p),\Psi_{a,U_a}(z^S(p))
\right).
\]
By \eqref{eqn:segment-inverse},
\begin{equation}
\label{eqn:L-inverse}
\|L(p)^{-1}\|
\le
C\delta_a^2.
\end{equation}
Let \(i\in\{b,\tau\}\). Differentiating
\(\mathcal F(p,d(p))=0\) with respect to \(p_i\) gives
\[
L(p)\partial_i d(p)
=
-\partial_i\mathcal F(p,d(p)).
\]
Since \(\partial_i\mathcal F(p,0)=\partial_i g(p)\), the mean-value theorem
in the \(d\)-variable, together with \eqref{eqn:N-C3-bound} and
\eqref{eqn:exact-coordinate-derivatives}, gives
\[
\begin{aligned}
\|\partial_i\mathcal F(p,d(p))-\partial_i g(p)\|
\le
C\delta_a^{-2}\|d(p)\|_{\#,a}\le
C\delta_a^2r.
\end{aligned}
\]
Using \eqref{eqn:g-P-derivatives}, we have
$
\|\partial_i\mathcal F(p,d(p))\|
\le
C\delta_a^2.
$
Together with \eqref{eqn:L-inverse}, this yields
\begin{equation}
\label{eqn:d-first-derivatives}
\|\partial_i d(p)\|_{\#,a}
\le
C\delta_a^4,
\
i\in\{b,\tau\}.
\end{equation}
For the second derivatives, let \(i,j\in\{b,\tau\}\). Differentiating once
more gives
\[
\begin{aligned}
L(p)\partial_{ij}d(p)
&=
-\partial_{ij}\mathcal F(p,d(p))
-
D_d\partial_i\mathcal F(p,d(p))[\partial_jd(p)]
 -
D_d\partial_j\mathcal F(p,d(p))[\partial_id(p)]
\\
&\ \ -
D_d^2\mathcal F(p,d(p))
[\partial_id(p),\partial_jd(p)].
\end{aligned}
\]
Since \(\partial_{ij}\mathcal F(p,0)=\partial_{ij}g(p)\),
\eqref{eqn:exact-coordinate-derivatives}, \eqref{eqn:d-P} and \eqref{eqn:N-C3-bound}
imply that
\[
\|\partial_{ij}\mathcal F(p,d(p))-\partial_{ij}g(p)\|
\le
C\delta_a^{-2}\|d(p)\|_{\#,a}
\le
C\delta_a^2r.
\]
Thus, by \eqref{eqn:g-P-derivatives}, $\|\partial_{ij}\mathcal F(p,d(p))\|
\le
C\delta_a^2.$
Furthermore, \eqref{eqn:N-C3-bound} gives
\[
\|D_d\partial_i\mathcal F(p,d(p))\|
+
\|D_d^2\mathcal F(p,d(p))\|
\le
C\delta_a^{-2}.
\]
Using \eqref{eqn:d-first-derivatives}, we conclude that
$\|L(p)\partial_{ij}d(p)\|
\le
C\delta_a^2.$
Finally, \eqref{eqn:L-inverse} implies
\[
\|\partial_{ij}d(p)\|_{\#,a}
\le
C\delta_a^4.
\]
Combining the first- and second-derivative estimates, we obtain
\begin{equation}
\label{eqn:d-parameter-derivatives}
\left\|
\partial_{(b,\tau)}^\alpha
\left[
\Psi_{a,U_a}(z_{a,U_a}^S(b,\tau))
-
h_{a,U_a}^S(b,\tau)
\right]
\right\|_{\#,a}
\le
C\delta_a^4,
\
1\le|\alpha|\le2.
\end{equation}

\smallskip
\noindent
\emph{(d) Comparison of the two energies.}

Rotation invariance and the definition of the mass-chart give
\[
E_a^\infty(S_{a,U_a}(p))-e_\infty(a)
=
\mathscr E_{a,U_a}(z^S(p)+h^S(p)).
\]
On the other hand,
\[
f_{a,U_a}(z^S(p))
=
\mathscr E_{a,U_a}
\left(
z^S(p)+h^S(p)+d(p)
\right).
\]
Define
\[
\mathcal R_{a,U_a}(p)
:=
f_{a,U_a}(z^S(p))
-
\left[
E_a^\infty(S_{a,U_a}(p))-e_\infty(a)
\right].
\]
Since \(d(p)\in\mathcal H_{a,U_a}\), Taylor's formula in the
\(\mathcal H_{a,U_a}\)-variable gives
\[
\begin{aligned}
\mathcal R_{a,U_a}(p)
&=
D\mathscr E_{a,U_a}(z^S+h^S)[d]+
\int_0^1(1-t)
D^2\mathscr E_{a,U_a}
(z^S+h^S+td)[d,d]
\,\mathrm dt.
\end{aligned}
\]
Because \(d(p)\in\mathcal H_{a,U_a}\), the first term is
$
D\mathscr E_{a,U_a}(z^S+h^S)[d]
=
g(p)[d(p)].
$
Thus, by \eqref{eqn:g-P}, \eqref{eqn:d-P}, and
\eqref{eqn:energy-C4-bound},
\[
\begin{aligned}
|\mathcal R_{a,U_a}(p)|
&\le
\|g(p)\|_{\mathcal H_{a,U_a}^*}
\|d(p)\|_{\#,a}
+
C\delta_a^{-2}\|d(p)\|_{\#,a}^2
\\
&\le
C\delta_a^2r\cdot C\delta_a^4r
+
C\delta_a^{-2}(C\delta_a^4r)^2
\\
&\le
C\delta_a^6r^2.
\end{aligned}
\]
Therefore
\begin{equation}
\label{eq:sec23-150}
\left|
f_{a,U_a}(z_{a,U_a}^S(b,\tau))
-
\left[
E_a^\infty(S_{a,U_a}(b,\tau))-e_\infty(a)
\right]
\right|
\le
C\delta_a^6r^2.
\end{equation}

It remains to control the derivatives of the remainder with respect to \(p=(b,\tau)\) up to second order.
Write
\[
\mathcal R_{a,U_a}(p)=g(p)[d(p)]+Q(p),
\]
where
\[
Q(p)
:=
\int_0^1(1-t)
D^2\mathscr E_{a,U_a}
(z^S+h^S+td)[d,d]
\,\mathrm dt.
\]
For the linear term,
$
\partial_i(g[d])
=(\partial_i g)[d]+g[\partial_i d].
$
Hence, by \eqref{eqn:g-P}, \eqref{eqn:g-P-derivatives},
\eqref{eqn:d-P} and \eqref{eqn:d-first-derivatives},
\[
|\partial_i(g[d])|
\le
C\delta_a^6r.
\]
Similarly,
\[
\begin{aligned}
\partial_{ij}(g[d])
&=
(\partial_{ij}g)[d]
+(\partial_i g)[\partial_jd]
+(\partial_j g)[\partial_id]
+g[\partial_{ij}d],
\end{aligned}
\]
so \eqref{eqn:d-parameter-derivatives} gives
$
|\partial_{ij}(g[d])|
\le
C\delta_a^6.
$
For the quadratic term, set
\[
X_t(p):=z^S(p)+h^S(p)+td(p).
\]
Note that
\[
\begin{aligned}
\partial_iQ(p)
&=
\int_0^1(1-t)
\Bigl(
D^3\mathscr E_{a,U_a}(X_t)
[\partial_iX_t,d,d]
+
2D^2\mathscr E_{a,U_a}(X_t)
[\partial_id,d]
\Bigr)
\,\mathrm dt.
\end{aligned}
\]
Here
$\|\partial_iX_t\|_{\#,a}\le C$
by \eqref{eqn:exact-coordinate-derivatives} and
\eqref{eqn:d-first-derivatives}. Therefore, \eqref{eqn:d-P},
\eqref{eqn:energy-C4-bound} and
\eqref{eqn:d-first-derivatives} imply
\[
|\partial_iQ(p)|
\le
C\delta_a^6r.
\]
Differentiating once more and using the same bounds give
\[
\begin{aligned}
|\partial_{ij}Q(p)|
&\le C\delta_a^{-2}\Bigl[
\bigl(
\|\partial_{ij}X_t\|_{\#,a}
+
\|\partial_iX_t\|_{\#,a}
\|\partial_jX_t\|_{\#,a}
\bigr)
\|d\|_{\#,a}^2
\\
&\ \ +
\bigl(
\|\partial_iX_t\|_{\#,a}\|\partial_jd\|_{\#,a}
+
\|\partial_jX_t\|_{\#,a}\|\partial_id\|_{\#,a}
\bigr)
\|d\|_{\#,a}
\\
&\ \ +
\|\partial_id\|_{\#,a}
\|\partial_jd\|_{\#,a}
+
\|\partial_{ij}d\|_{\#,a}
\|d\|_{\#,a}
\Bigr].
\end{aligned}
\]
The right-hand side is bounded by \(C\delta_a^6\), because
\eqref{eqn:exact-coordinate-derivatives} and
\eqref{eqn:d-parameter-derivatives} imply
\[
\|\partial_iX_t\|_{\#,a}
+
\|\partial_{ij}X_t\|_{\#,a}
\le C.
\]
Consequently,
\[
|\partial_{ij}Q(p)|
\le
C\delta_a^6.
\]
Combining the above estimates for \(g[d]\) and \(Q\), we obtain
\begin{equation}
\label{eqn:remainder-C2-bounds}
|\nabla_{(b,\tau)}\mathcal R_{a,U_a}(b,\tau)|
\le
C\delta_a^6r,
\
\|D^2_{(b,\tau)}\mathcal R_{a,U_a}(b,\tau)\|
\le
C\delta_a^6.
\end{equation}
Together with \eqref{eq:sec23-150}, this gives, for every compact parameter
set \(K\) contained in the fixed small neighborhood, there is $C_K>0$,
\[
\|\delta_a^{-2}\mathcal R_{a,U_a}\|_{C^2(K)}
\le
C_K\delta_a^4
\to0.
\]
Thus
\begin{equation}
\label{eqn:remainder-C2-convergence}
\delta_a^{-2}\mathcal R_{a,U_a}
\to0
\
\text{locally in }C^2.
\end{equation}
Combining \eqref{eqn:remainder-C2-convergence} with the local \(C^2\)
convergence in \eqref{eq:sec23-145}, we conclude that
\begin{equation}
\label{eqn:reduced-energy-C2}
\delta_a^{-2}
f_{a,U_a}
\left(
z_{a,U_a}^S(b,\tau)
\right)
\to
\Phi(b,\tau)
\
\text{locally in }C^2.
\end{equation}

\medskip
\noindent
\emph{Step 6.4. Positivity of the reduced energy.}

Define
\[
F_{a,U_a}(b,\tau)
:=
\delta_a^{-2}
f_{a,U_a}
\left(
z_{a,U_a}^S(b,\tau)
\right).
\]
Then $F_{a,U_a}(0,0)=0$ and
$DF_{a,U_a}(0,0)=0$
since
\[
z_{a,U_a}^S(0,0)=0,
\
f_{a,U_a}(0)=0,
\
Df_{a,U_a}(0)=0.
\]
Note that
\[
D^2\Phi(0,0)
=
\begin{pmatrix}
2&0\\
0&8\gamma
\end{pmatrix}.
\]
Set
\[
\lambda_*
:=
\min\{2,8\gamma\}>0.
\]
By continuity of \(D^2\Phi\) and \eqref{eqn:reduced-energy-C2}, there exists
$r_{\rm pos}>0,$
independent of \(a\) and \(U_a\), such that
\[
v^TD^2\Phi(b,\tau)v
\ge
\frac{3\lambda_*}{4}
|v|_{\mathbb R^2}^2
\]
when $b^2+\tau^2\le r_{\rm pos}^2.$
By the local \(C^2\)-convergence,
\[
\left\|
D^2F_{a,U_a}(b,\tau)
-
D^2\Phi(b,\tau)
\right\|
\le
\frac{\lambda_*}{4}
\]
when $b^2+\tau^2\le r_{\rm pos}^2,$ uniformly in \(U_a\). Hence
\[
v^TD^2F_{a,U_a}(b,\tau)v
\ge
\frac{\lambda_*}{2}
|v|_{\mathbb R^2}^2.
\]
Set $p=(b,\tau)$,
Taylor's formula along the segment \(tp\), \(0\le t\le1\), gives
\[
F_{a,U_a}(p)
=
\int_0^1
(1-t)
D^2F_{a,U_a}(tp)[p,p]
\,\mathrm dt.
\]
Therefore
\[
\begin{aligned}
F_{a,U_a}(b,\tau)
\ge
\frac{\lambda_*}{2}
(b^2+\tau^2)
\int_0^1(1-t)\,\mathrm dt
=
\frac{\lambda_*}{4}
(b^2+\tau^2).
\end{aligned}
\]
Consequently,
we obtain for $b^2+\tau^2\le r_{\rm pos}^2$,
\begin{equation}
\label{eqn:parameter-reduced-positivity}
f_{a,U_a}
\left(
z_{a,U_a}^S(b,\tau)
\right)
\ge
\frac{\lambda_*}{4}\delta_a^2(b^2+\tau^2).
\end{equation}

Let
$\Theta_{a,U_a}$
denote the local inverse of
\[
(b,\tau)
\longmapsto
z_{a,U_a}^S(b,\tau).
\]
By Step 6.2,
the inverse function theorem gives uniform constants
\(C_z,C_\Theta>0\) such that
\[
\sup_{|p|\le r_{\rm par}}
\|Dz_{a,U_a}^S(p)\|
\le
C_z
\]
and
\[
\sup_{|z|\le r_{\rm reduced}}
\|D\Theta_{a,U_a}(z)\|
\le
C_\Theta.
\]
Since
$z_{a,U_a}^S(0)=0,$
the mean-value theorem gives
\[
\begin{aligned}
|z_{a,U_a}^S(p)|
=
|z_{a,U_a}^S(p)-z_{a,U_a}^S(0)|
\le
C_z|p|_{\mathbb R^2}.
\end{aligned}
\]
If
\[
z=z_{a,U_a}^S(p),
\
p=\Theta_{a,U_a}(z),
\]
then
\[
|z|
\le
C_z|\Theta_{a,U_a}(z)|_{\mathbb R^2}.
\]
Thus
\[
|\Theta_{a,U_a}(z)|_{\mathbb R^2}
\ge
C_z^{-1}|z|.
\]
On the other hand, $\Theta_{a,U_a}(0)=0,$
and therefore
\[
\begin{aligned}
|\Theta_{a,U_a}(z)|_{\mathbb R^2}
=
|\Theta_{a,U_a}(z)-\Theta_{a,U_a}(0)|_{\mathbb R^2}
\le
C_\Theta|z|.
\end{aligned}
\]
Hence
\begin{equation}
\label{eqn:theta-bi-Lipschitz}
C_z^{-1}|z|
\le
|\Theta_{a,U_a}(z)|_{\mathbb R^2}
\le
C_\Theta|z|.
\end{equation}

It remains to choose the reduced-coordinate radius so that the inverse map stays inside the parameter region where \eqref{eqn:parameter-reduced-positivity} holds.
Choose $r_{0}>0$
so small that
$r_{0}
\le
r_{\rm reduced}$
and
$
C_\Theta r_{0}
\le
r_{\rm pos}.
$
Then, when $|z|\le r_{0},$
we have
$|\Theta_{a,U_a}(z)|_{\mathbb R^2}
\le
r_{\rm pos},$
so the preceding positivity estimate \eqref{eqn:parameter-reduced-positivity} in the \((b,\tau)\)-variables
is applicable.

Taking $(b,\tau)
=\Theta_{a,U_a}(z),$
and using \eqref{eqn:parameter-reduced-positivity} and
\eqref{eqn:theta-bi-Lipschitz}, we obtain
\[
\begin{aligned}
f_{a,U_a}(z)
&=
f_{a,U_a}
\left(
z_{a,U_a}^S(\Theta_{a,U_a}(z))
\right)
\\
&\ge
\frac{\lambda_*}{4}\delta_a^2
|\Theta_{a,U_a}(z)|_{\mathbb R^2}^2
\\
&\ge
\frac{\lambda_*}{4}C_z^{-2}
\delta_a^2|z|^2.
\end{aligned}
\]
Finally, there is $c>0$ such that
\begin{equation}
\label{eq:sec23-152}
f_{a,U_a}(z)
\ge
c\delta_a^2|z|^2,\
|z|\le r_0.
\end{equation}
This proves the required uniform positivity of the reduced energy.

\medskip
\noindent
\textbf{Step 7. Quadratic growth in the complementary direction.}

 Let \(v\in H_V^1(\mathbb R^2)\) satisfy
$\|v\|_{L^2(\mathbb R^2)}=1,$
and set
\[
d_{\#,a}(v)
:=
\operatorname{dist}_{\#,a}(v,\mathcal G_a).
\]
We may assume that \(d_{\#,a}(v)>0\) is sufficiently small and choose
\(\widehat U_a\in\mathcal G_a\) such that
\begin{equation}
\label{eq:choose-nearby-ground-state}
\|v-\widehat U_a\|_{\#,a}
\le
2d_{\#,a}(v).
\end{equation}
In particular,
$\|v-\widehat U_a\|_{*,a}
\le
2d_{\#,a}(v).$
By
Step~4, there exists a unique small \(s=s(v)\) such that
\[
\widehat U_{a,s}(x)
:=
\widehat U_a(R_{-\delta_as}x)
\ \text{and}\
\xi
:=
\Xi(s(v),v)
\]
satisfy
\[
v
=
\mathcal C_{\widehat U_{a,s(v)}}(\xi)\ \text{and}\
\langle\xi,\widehat U_{a,s(v)}\rangle
=
\langle\xi,Z_{0,a,s(v)}\rangle
=
0.
\]
Moreover, $|s(v)|
\le
C\|v-\widehat U_a\|_{*,a}$ for some constant $C>0$.
Note that
$$
\widehat U_{a,s}-\widehat U_a=\int_{0}^s \partial_t\widehat U_{a,t}\mathrm dt
$$
and $s=s(v)$ is bounded. By Lemma~\ref{lem:whole}, we know that
\begin{equation*}
\|
\widehat U_{a,s(v)}-\widehat U_a
\|_{\#,a}
\le
C|s(v)|
\le
C\|v-\widehat U_a\|_{\#,a}.
\end{equation*}
Consequently,
\begin{equation}
\label{eq:v-modulated-basepoint-control}
\|
v-\widehat U_{a,s(v)}
\|_{\#,a}
\le
C\|v-\widehat U_a\|_{\#,a}.
\end{equation}
Since
\[
1-\langle
v,\widehat U_{a,s(v)}
\rangle
=
\frac12
\|
v-\widehat U_{a,s(v)}
\|_{L^2(\mathbb R^2)}^2,
\]
then \eqref{eq:choose-nearby-ground-state} and  \eqref{eq:v-modulated-basepoint-control} imply that
 $\langle
v,\widehat U_{a,s(v)}
\rangle\geq\frac{1}{2}$ provided that \(d_{\#,a}(v)>0\) is sufficiently small.
Then
\[
\xi
=
\frac{v-\widehat U_{a,s(v)}}{\langle
v,\widehat U_{a,s(v)}
\rangle}
+
\left(\frac{1}{\langle
v,\widehat U_{a,s(v)}
\rangle}-1\right)
\widehat U_{a,s(v)},
\]
which, together with
rotation invariance of the \(\#\)-norm,  the bound
$\sup_{U_a\in\mathcal G_a}\|U_a\|_{\#,a}\le C$
and \eqref{eq:v-modulated-basepoint-control}, implies that
\begin{equation}
\label{eq:modulated-coordinate-control}
\|\xi\|_{\#,a}
\le
C\|v-\widehat U_a\|_{\#,a}
\le
C d_{\#,a}(v).
\end{equation}

Recalling that \(\widehat U_{a,s(v)}\in\mathcal G_a\), we now relabel
\[
U_a
:=
\widehat U_{a,s(v)}\
\text{and}\
Z_{0,a}
:=
Z_{0,a,s(v)}.
\]
Then $v=\mathcal C_{U_a}(\xi)$
with
$\xi\in\mathcal X_{a,U_a}$.
Since all the constructions and estimates in Steps~5-6 are uniform in
\(U_a\in\mathcal G_a\), they apply to this choice of \(U_a\).
Let
$z:=P_{\mathcal Z_{a,U_a}}\xi,$ then
the uniform boundedness of \(P_{\mathcal Z_{a,U_a}}\),
\eqref{eq:sec23-131} and \eqref{eq:modulated-coordinate-control} give
$$
|z|
\le
C\|z\|_{\#,a}
\le
C\|\xi\|_{\#,a}\le
C d_{\#,a}(v).
$$
Using the splitting from Step~5,  $\xi$ can in turn be
written uniquely as
\[
\xi
=
z+\Psi_{a,U_a}(z)+h,
\
z=P_{\mathcal Z_{a,U_a}}\xi,
\
h\in\mathcal H_{a,U_a},
\]
where
\(h=P_{\mathcal H_{a,U_a}}\xi-\Psi_{a,U_a}(z)\) and
\(P_{\mathcal H_{a,U_a}}\xi:=\xi-P_{\mathcal Z_{a,U_a}}\xi\) for
\(\xi\in\mathcal X_{a,U_a}\).
Moreover, by \eqref{eqn:Psi-bound},
$$
\|h\|_{\#,a}
\le
C\|\xi\|_{\#,a}+C|z|
\le
C\|\xi\|_{\#,a}\le
C d_{\#,a}(v).
$$
Consequently,
\begin{equation*}
|z|+\|h\|_{\#,a}
\le
C d_{\#,a}(v),
\end{equation*}
which means that $|z|+\|h\|_{\#,a}$  is sufficiently small.
Thus,
using \eqref{eq:sec23-074}, \eqref{eq:sec23-138} and \eqref{eqn:Psi-bound}, we infer that
\[
D^2\mathscr E_{a,U_a}
\bigl(z+\Psi_{a,U_a}(z)+th\bigr)
[h,h]
\ge
c\delta_a^{-2}\|h\|_{\#,a}^2
\]
for every \(t\in[0,1]\).
By definition of \(\Psi_{a,U_a}\),
\[
D\mathscr E_{a,U_a}
\bigl(z+\Psi_{a,U_a}(z)\bigr)[h]
=
0.
\]
Applying Taylor's formula
to $\mathscr E_{a,U_a}
\bigl(z+\Psi_{a,U_a}(z)+th\bigr)$, we obtain from the definition of $f_{a,U_a}(z)$ that
\begin{equation}
\label{eq:sec23-156}
\mathscr E_{a,U_a}(\xi)
-
f_{a,U_a}(z)
\ge
c\delta_a^{-2}\|h\|_{\#,a}^2.
\end{equation}
Combining \eqref{eq:sec23-152} and \eqref{eq:sec23-156},
\begin{equation}
\label{eq:sec23-157}
E_a^\infty(\mathcal C_{U_a}(\xi))-e_\infty(a)
\ge
c\delta_a^2|z|^2
+
c\delta_a^{-2}\|h\|_{\#,a}^2.
\end{equation}

On the other hand, since \(U_a\in\mathcal G_a\), we have
$$
\operatorname{dist}_{*,a}
\left(
\mathcal C_{U_a}(\xi),\mathcal G_a
\right)
\le
\left\|
\mathcal C_{U_a}(\xi)-U_a
\right\|_{*,a}.
$$
By \(\|U_a\|_{L^2(\mathbb R^2)}=1\) and
\(\langle U_a,\xi\rangle=0\), there holds
$
\|U_a+\xi\|_{L^2(\mathbb R^2)}^2
=
1+\|\xi\|_{L^2(\mathbb R^2)}^2.
$
Set
$$
m_\xi
:=
\|U_a+\xi\|_{L^2(\mathbb R^2)}
=
\bigl(1+\|\xi\|_{L^2(\mathbb R^2)}^2\bigr)^{1/2}.
$$
Recall that $\mathcal C_{U_a}(\xi)
=\frac{U_a+\xi}{m_\xi},$
 then
$$
\mathcal C_{U_a}(\xi)-U_a
=
\frac{\xi}{m_\xi}
+
\left(\frac1{m_\xi}-1\right)U_a.
$$
Since \(m_\xi\ge1\),
$$
\left|\frac1{m_\xi}-1\right|
=
\frac{m_\xi-1}{m_\xi}
=
\frac{\|\xi\|_{L^2(\mathbb R^2)}^2}
{m_\xi(m_\xi+1)}
\le
\frac12\|\xi\|_{L^2(\mathbb R^2)}^2
\le
\frac12\|\xi\|_{*,a}^2.
$$
 Therefore, by
the norm equivalence \eqref{eq:sec23-131} and
\eqref{eqn:Psi-bound}, there is a constant $C>0$ such that
\begin{equation*}
\label{eq:mass-chart-distance}
\begin{aligned}
\left\|
\mathcal C_{U_a}(\xi)-U_a
\right\|_{*,a}
&\le
\frac1{m_\xi}\|\xi\|_{*,a}
+
\left|\frac1{m_\xi}-1\right|
\|U_a\|_{*,a}
\\
&\le
\|\xi\|_{*,a}
+
2\|\xi\|_{*,a}^2
\\
&\le
C(|z|+\|h\|_{*,a}),
\end{aligned}
\end{equation*}
which yields
$$
\operatorname{dist}_{*,a}
\left(
\mathcal C_{U_a}(\xi),\mathcal G_a
\right)
\le
C\left(
|z|+\|h\|_{*,a}
\right).
$$
Note that \(\delta_a<1\) and
\(\|h\|_{*,a}\le\|h\|_{\#,a}\), then \eqref{eq:sec23-157} implies that
\begin{equation*}
E_a^\infty(\mathcal C_{U_a}(\xi))-e_\infty(a)
\ge
c\delta_a^2
\operatorname{dist}_{*,a}
\left(
\mathcal C_{U_a}(\xi),\mathcal G_a
\right)^2,
\end{equation*}
that is,
\begin{equation}
\label{eq:sec23-159}
E_a^\infty(v)-e_\infty(a)
\ge
c\delta_a^2
\operatorname{dist}_{*,a}
\left(
v,\mathcal G_a
\right)^2,
\end{equation}
if $\operatorname{dist}_{\#,a}(v,\mathcal G_a)$ is sufficiently small.

\medskip
\noindent
\textbf{Step 8. From the \(\#\)-norm to the stated \(*\)-norm.}

This final step is needed because smallness of
\(\operatorname{dist}_{*,a}\) does not imply smallness of $\operatorname{dist}_{\#,a}$. Let
\[
d_a(v)
:=
\operatorname{dist}_{*,a}(v,\mathcal G_a).
\]
We may assume that \(d_a(v)>0\) and choose
\(U_a\in\mathcal G_a\) such that
\[
\|v-U_a\|_{*,a}
\le
2d_a(v).
\]
Since \(\|v\|_{L^2(\mathbb R^2)}=\|U_a\|_{L^2(\mathbb R^2)}=1\),
\begin{equation}
\label{eq:sec23-160}
2\langle U_a,v-U_a\rangle
+
\|v-U_a\|_{L^2(\mathbb R^2)}^2
=0.
\end{equation}
Expanding the energy and using the Euler-Lagrange equation of \(U_a\),
together with \eqref{eq:sec23-160}, give
\begin{equation}
\label{eq:sec23-161}
\begin{aligned}
E_a^\infty(v)-e_\infty(a)
&=
\|\nabla(v-U_a)\|_{L^2(\mathbb R^2)}^2
+
\int_{\mathbb R^2}V(x)(v-U_a)^2\,\mathrm dx
-
\mu_a^\infty\|v-U_a\|_{L^2(\mathbb R^2)}^2
\\
&\quad
-
3a\int_{\mathbb R^2}U_a^2(x)(v-U_a)^2\,\mathrm dx
-
2a\int_{\mathbb R^2}U_a(x)(v-U_a)^3\,\mathrm dx
\\
&\quad
-
\frac a2\int_{\mathbb R^2}(v-U_a)^4\,\mathrm dx.
\end{aligned}
\end{equation}
Then \eqref{eq:sec23-103}-\eqref{eq:sec23-104} and H\"older's inequality imply
\begin{equation}
\label{eq:sec23-162}
\delta_a^2
\left|
\int_{\mathbb R^2}U_a(x)(v-U_a)^3\,\mathrm dx
\right|
\le
C\|v-U_a\|_{*,a}^3\
\text{and}\
\delta_a^2
\int_{\mathbb R^2}(v-U_a)^4\,\mathrm dx
\le
C\|v-U_a\|_{*,a}^4.
\end{equation}
Choose  a small constant \(r_\#>0\) such that
\eqref{eq:sec23-159} is valid
for $\operatorname{dist}_{\#,a}(v,\mathcal G_a)<r_{\#}$.
Let \(\rho_0>0\) be so small that $2\rho_0<\frac{r_\#}{2}$
and
\begin{equation}
\label{eq:sec23-163}
C\left(
(2\rho_0)^2
+
(2\rho_0)^3
+
(2\rho_0)^4
\right)
\le
\frac{r_\#^2}{4}.
\end{equation}
Assume \(d_a(v)<\rho_0\), then $\|v-U_a\|_{*,a}<2\rho_0.$
If $\|v-U_a\|_{\#,a}<r_\#,$
the estimate \eqref{eq:sec23-159} is available and \eqref{eq:sec23-075} holds. If
$\|v-U_a\|_{\#,a}\ge r_\#,$
then
\[
\delta_a^2
\int_{\mathbb R^2}V(x)(v-U_a)^2\,\mathrm dx
=
\|v-U_a\|_{\#,a}^2
-
\|v-U_a\|_{*,a}^2
\ge
\frac12r_\#^2.
\]
Recall that \(\mu_a^\infty<0\) for \(a\) close to \(a^*\). Multiplying
\eqref{eq:sec23-161} by \(\delta_a^2\) and using
\eqref{eq:sec23-104} and \eqref{eq:sec23-162}, we obtain
\[
\begin{aligned}
\delta_a^2\bigl[E_a^\infty(v)-e_\infty(a)\bigr]
&\ge
\delta_a^2
\int_{\mathbb R^2}V(x)(v-U_a)^2\,\mathrm dx
\\
&\quad
-
C\left(
\|v-U_a\|_{*,a}^2
+
\|v-U_a\|_{*,a}^3
+
\|v-U_a\|_{*,a}^4
\right).
\end{aligned}
\]
By the above lower bound for the potential term and
\eqref{eq:sec23-163}, the right-hand side is at least
\(\frac{r_\#^2}{4}\). Hence
\begin{equation}
\label{eq:sec23-165}
E_a^\infty(v)-e_\infty(a)
\ge
\frac{r_\#^2}{4}\delta_a^{-2}\ge c\delta_a^2d_a(v)^2
\end{equation}
for all \(a\) sufficiently close to \(a^*\). Thus
\eqref{eq:sec23-075} holds in both cases, and the proof is complete.
\end{proof}

\section{Exponential estimates for the energy gap}\label{sec:boundary-correction}

 In this section, we compare the bounded domain and whole space minimal energies.
Define
\begin{equation*}
\Delta_\Om(a)
:=
e_\Om(a)-e_\infty(a).
\end{equation*}
Since every $H_0^1(\Om)$ function extends by zero to an $H_V^1(\mathbb R^2)$
function,
\[
\Delta_\Om(a)\ge0.
\]
By a standard diagonal argument, we may
assume that all the bounded domain and whole space asymptotic results established in Sections \ref{sec:Blow-up}-\ref{sec:Nondegeneracy}  and used below hold along a common subsequence
\(\{a_k\}\). For simplicity, we also omit the index $k$.

\begin{proposition}\label{prop:exp}
For every $\eta\in(0,1/4)$, there exist $c_\eta,C_\eta>0$ and an integer
$N\ge0$ such that, for \(a\) sufficiently close to $a^*$ along the common subsequence fixed above,
\begin{equation*}
\Delta_\Om(a)
\le
C_\eta\da^{-N}
\exp\left(
-\frac{2(1-\eta)D_\Om}{\da}
\right)
\end{equation*}
and
\begin{equation*}
\Delta_\Om(a)
\ge
c_\eta\da^2
\exp\left(
-\frac{2(1+\eta)d_\Om(x_a)}{\da}
\right),
\end{equation*}
where $x_a$ is the maximum point of the bounded domain minimizer $u_a$.
\end{proposition}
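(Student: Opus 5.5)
The upper bound comes from a cut-off test function. Fix $y_0\in\mathcal M_\Om$, so $d_\Om(y_0)=D_\Om$. Take a whole-space ground state $U_a\in\mathcal G_a$ and rotate it so that its maximum point $p_a$ lies on the ray through $y_0$. Then $|p_a-y_0|=\bigl||p_a|-1\bigr|=o(\da)$ by Lemma~\ref{lem:whole}. Next choose a cut-off $\chi$ with $\chi\equiv1$ on $B_{D_\Om-2\epsilon}(y_0)$, support in $B_{D_\Om-\epsilon}(y_0)\subset\Om$, and $|\nabla\chi|\le C/\epsilon$. The test function is $\phi=\chi U_a/\|\chi U_a\|_{L^2}$.

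The decay estimate \eqref{eqn:W-A-decay}, after undoing the scaling $x=p_a+\da\zeta$, gives for $|x-p_a|\ge D_\Om-2\epsilon-o(\da)$:
\[
U_a+\da|\nabla U_a|\le C\da^{-1}e^{-(1-\eta')|x-p_a|/\da}.
\]
Hence $1-\|\chi U_a\|_2^2$, the gradient error, the potential error and the quartic error are all bounded by $C\da^{-N}e^{-2(1-\eta')(D_\Om-2\epsilon)/\da}$. The potential is bounded on $\Om$, so no growth issue arises there. Expanding $E_a^\infty(\phi)=E_a^\infty(U_a)+O(\text{these tails})$ and choosing $\eta',\epsilon$ in terms of $\eta$ then gives the stated upper bound.

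For the lower bound, extend $u_a$ by zero, so $u_a\in H_V^1$ with $\|u_a\|_2=1$ and $E_a^\infty(u_a)=e_\Om(a)$. Then Lemma~\ref{lem:uniformcoercive}(ii) gives
\[
\Delta_\Om(a)\ge c\da^2\operatorname{dist}_{*,a}(u_a,\mathcal G_a)^2,
\]
provided $\operatorname{dist}_{*,a}(u_a,\mathcal G_a)<\rho_0$. To check this smallness, rescale around $x_a$. By Lemma~\ref{lem:blowup} and Lemma~\ref{lem:scale} ($\ea/\da\to1$, $(|x_a|-1)/\da\to0$), $\da u_a(x_a+\da\cdot)\to q$ in $H^1$. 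Take $\widetilde U_a\in\mathcal G_a$ rotated so that its maximum point lies on the ray through $x_a$. Its blow-up about $x_a$ also converges to $q$, since the two centres differ by $o(\da)$. Hence $\|u_a-\widetilde U_a\|_{*,a}\to0$.

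It remains to bound the distance from below by the mass of ground states outside $\Om$. Since $u_a=0$ on $\mathbb R^2\setminus\Om$, for every $U\in\mathcal G_a$,
\[
\|u_a-U\|_{*,a}^2\ge\int_{\mathbb R^2\setminus\Om}U^2\,\mathrm dx.
\]
So we need a lower bound on $\inf_{U\in\mathcal G_a}\int_{\Om^c}U^2$. This step is the main obstacle, because the infimum runs over all rotations, and a rotation can move the maximum point $p$ of $U$ far from $x_a$.

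\emph{Case 1: $p$ is within $\rho_0$-relevant distance of $x_a$.} If the infimum is nearly attained by some $U$, then $\|u_a-U\|_{*,a}$ is small. After scaling, the two bumps must then be centred within $o(\da)$ of each other, since otherwise the $*$-distance stays of order one. So $|p-x_a|=o(\da)$, hence $d_\Om(p)\le d_\Om(x_a)+o(\da)$. Choose $z\in\pa\Om$ with $|z-p|=d_\Om(p)$. The $C^2$ boundary gives an interior-exterior ball condition, so $\Om^c$ contains a ball $B_{\da}(z')$ with $|z'-p|\le d_\Om(p)+2\da$. The lower estimate in Lemma~\ref{lem:whole}, with $L$ a bound for $\operatorname{diam}\Om+1$, then gives
\[
\int_{\Om^c}U^2\ge c\,e^{-2(1+\eta')(d_\Om(x_a)+3\da)/\da}.
\]
Adjusting $\eta'$ against $\eta$ absorbs the $e^{-6(1+\eta')}$ factor.

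\emph{Case 2: otherwise.} In this case the distance exceeds a fixed constant, which is far larger than the exponential bound.

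Combining the two cases with Lemma~\ref{lem:uniformcoercive}(ii) yields $\Delta_\Om(a)\ge c_\eta\da^2e^{-2(1+\eta)d_\Om(x_a)/\da}$.
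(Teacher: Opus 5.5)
Your proposal is correct and follows essentially the same route as the paper. The upper bound cuts off a whole-space ground state rotated onto a point of $\mathcal M_\Omega$, and the lower bound combines the zero extension of $u_a$, Lemma~\ref{lem:uniformcoercive}(ii), localization of a near-minimizing ground state's peak within $o(\delta_a)$ of $x_a$, and the exterior-ball/lower-exponential estimate. The differences are cosmetic. The paper picks $U_a^*\in\mathcal G_a$ with $\|\widetilde u_a-U_a^*\|_{*,a}\le 2\operatorname{dist}_{*,a}(\widetilde u_a,\mathcal G_a)\to0$, which makes your Case~2 vacuous. It also invokes uniqueness of whole-space minimizers modulo rotations to transfer the profile convergence and the lower exponential bound of Lemma~\ref{lem:whole} to $U_a^*$; you use that step implicitly.
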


\begin{proof}
We first prove the upper bound.  Choose $y_*\in\mathcal M_\Om$ and rotate a
whole space minimizer $U_a$ so that its maximum point $p_a$ has the same
angular direction as $y_*$.  Then $p_a-y_*=o(\da).$
Choose radii
\[
r_1=\left(1-\frac\eta4\right)D_\Om,
\
r_2=\left(1-\frac\eta8\right)D_\Om.
\]
Since \(d_\Om(y_*)=D_\Om\), one has
\(B_{r_2}(y_*)\subset\Om\). Choose a cut-off
\(\chi\in C_c^\infty(B_{r_2}(y_*))\) with
\(\chi\equiv1\) on \(B_{r_1}(y_*)\). On the set
\(\{x\in\mathbb R^2:\chi(x)\ne1\}\),
\[
|x-p_a|
\ge r_1-o(\delta_a)
=
\left(1-\frac\eta4\right)D_\Om-o(\delta_a)
\]
as $a\nearrow a^*$.
Set
\[
v_a:=\chi U_a,
\
m_a:=\|v_a\|_{L^2(\mathbb R^2)}^2,
\
\widehat v_a:=m_a^{-1/2}v_a.
\]
Then \(\widehat v_a\in H_0^1(\Om)\) and
\(\|\widehat v_a\|_{L^2(\Om)}=1\).
Apply the exponential upper bound in Lemma~\ref{lem:whole} with the smaller
parameter \(\eta/4\). After changing the constant, we have
\[
U_a(x)
\le
C_\eta\delta_a^{-1}
\exp\left(
-\frac{(1-\eta/4)|x-p_a|}{\delta_a}
\right)
\]
and
\[
|\nabla U_a(x)|
\le
C_\eta\delta_a^{-2}
\exp\left(
-\frac{(1-\eta/4)|x-p_a|}{\delta_a}
\right)
\]
for
$|x-p_a|\ge R_\eta\delta_a$ and \(a\) sufficiently close to \(a^*\).
Since $(1-\eta/4)^2>1-\eta,$
the lower bound on \(|x-p_a|\) above implies, after absorbing the
\(o(\delta_a)\)-term into the constant, an exponential factor no larger than
\(e^{-2(1-\eta)D_\Om/\delta_a}\). Because
\(V(x)=(|x|-1)^2\) has at most quadratic growth, integration of the
exponential decay at infinity gives an integer \(N\ge0\) such that
for \(a\) sufficiently close to \(a^*\),
\begin{equation*}
1-m_a
\le
C_\eta\da^{-N}
e^{-2(1-\eta)D_\Om/\da},
\end{equation*}
and
\begin{equation*}
\int_{\{x\in\mathbb R^2:\chi(x)\neq1\}}
\left(
U_a^2(x)+|\nabla U_a(x)|^2+U_a^4(x)+V(x)U_a^2(x)
\right)\mathrm dx
\le
C_\eta\da^{-N}
e^{-2(1-\eta)D_\Om/\da}.
\end{equation*}
Expanding
\[
|\nabla(\chi U_a)|^2
=
\chi^2|\nabla U_a|^2
+
U_a^2|\nabla\chi|^2
+
2\chi U_a\nabla\chi\cdot\nabla U_a
\]
and using
\[
2|\chi U_a\nabla\chi\cdot\nabla U_a|
\le
\chi^2|\nabla U_a|^2
+
|\nabla\chi|^2U_a^2,
\]
we obtain
\[
E_a^\Om(\widehat v_a)-e_\infty(a)
\le
C_\eta\da^{-N}
e^{-2(1-\eta)D_\Om/\da}.
\]
Hence the upper bound follows.

For the lower bound,
let \(\widetilde u_a\) be the zero extension of the bounded domain
minimizer \(u_a\). Then
\[
\Delta_\Omega(a)
=
E_a^\infty(\widetilde u_a)-e_\infty(a).
\]
Recall that, throughout the proof,
\(a\) is taken sufficiently
close to \(a^*\) along the common subsequence fixed in Sections \ref{sec:Blow-up}-\ref{sec:Nondegeneracy}.
Let \(U_a\in\mathcal G_a\) be the whole space minimizer chosen along
this subsequence, and let \(p_a\) be the maximum point. By Lemma \ref{lem:whole},
\[
W_a(y)
:=
\delta_a U_a(p_a+\delta_a y)
\to q(y)
\
\text{strongly in }H^1(\mathbb R^2),
\]
and
$
\frac{|p_a|-1}{\delta_a}\to0
$
as $a\nearrow a^*$.
By rotational invariance of the whole space problem,
choose a rotation \(R_a\) such that
$\bar p_a:=R_a p_a$
has the same angular direction as \(x_a\), and define
$\overline U_a(x):=U_a(R_a^{-1}x).$
Then \(\overline U_a\in\mathcal G_a\), \(\overline p_a\) is a maximum point of
\(\overline U_a\) and
$|\overline p_a|=|p_a|.$
Define
\[
\overline W_a(y)
:=
\delta_a\overline U_a(\overline p_a+\delta_a y).
\]
A direct computation gives
$\overline W_a(y)
=W_a(R_a^{-1}y).$
Since \(q\) is radial and the \(H^1(\mathbb R^2)\)-norm is invariant
under rotations, as $a\nearrow a^*$,
\[
\begin{aligned}
\|\overline W_a-q\|_{H^1(\mathbb R^2)}
&=
\|W_a\circ R_a^{-1}
-q\circ R_a^{-1}\|_{H^1(\mathbb R^2)}
\\
&=
\|W_a-q\|_{H^1(\mathbb R^2)}
\to0.
\end{aligned}
\]
Thus $\overline W_a
\to q
\ \text{strongly in }H^1(\mathbb R^2)$ as $a\nearrow a^*$.
Moreover,
\[
\frac{|\overline p_a|-1}{\delta_a}
=
\frac{|p_a|-1}{\delta_a}
\to0
\]
as $a\nearrow a^*$.
Now we show that
$
\|\widetilde u_a-\overline U_a\|_{*,a}
\to0
$ as $a\nearrow a^*$.
Setting
\[
\widetilde w_a(y)
:=
\delta_a\widetilde u_a(x_a+\delta_a y),
\]
we have $\widetilde w_a(y)
\to q(y)
\
\text{strongly in }H^1(\mathbb R^2)$ as $a\nearrow a^*$.
Since \(\overline p_a\) and \(x_a\) have the same angular direction,
\[
|\overline p_a-x_a|
=
\bigl||\overline p_a|-|x_a|\bigr|,
\]
which, together with
\[
\frac{|\overline p_a|-1}{\delta_a}\to0\ \text{and}\
\
\frac{|x_a|-1}{\delta_a}\to0\ \text{as}\ a\nearrow a^*,
\]
implies that
\[
\theta_a
:=
\frac{\overline p_a-x_a}{\delta_a}
\to0.
\]
Furthermore,
\[
\delta_a\overline U_a(x_a+\delta_a y)
=
\overline W_a(y-\theta_a).
\]
Note that
$\overline W_a(\,\cdot-\theta_a)
\to q
\ \text{strongly in }H^1(\mathbb R^2)$
as $a\nearrow a^*$.
Consequently,
\[
\begin{aligned}
\|\widetilde u_a-\overline U_a\|_{*,a}^2
&=
\int_{\mathbb R^2}
|\widetilde u_a-\overline U_a|^2\mathrm dx
+
\delta_a^2
\int_{\mathbb R^2}
|\nabla\widetilde u_a-\nabla\overline U_a|^2\mathrm dx
\\
&=
\left\|
\widetilde w_a-\overline W_a(\,\cdot-\theta_a)
\right\|_{H^1(\mathbb R^2)}^2
\to0
\end{aligned}
\]
as $a\nearrow a^*$.
Hence
$
\operatorname{dist}_{*,a}
(\widetilde u_a,\mathcal G_a)
\to0
$ as $a\nearrow a^*$.
By Lemma~\ref{lem:uniformcoercive}, for \(a\) sufficiently close to $a^*$,
\begin{equation}
\label{eqn:Delta-low}
\Delta_\Omega(a)
\ge
c\delta_a^2
\operatorname{dist}_{*,a}
(\widetilde u_a,\mathcal G_a)^2.
\end{equation}

In the following, we estimate $\operatorname{dist}_{*,a}
(\widetilde u_a,\mathcal G_a)$.
Choose \(U_a^*\in\mathcal G_a\) such that
\[
\|\widetilde u_a-U_a^*\|_{*,a}
\le
2\operatorname{dist}_{*,a}
(\widetilde u_a,\mathcal G_a).
\]
Then
\begin{equation}
\label{eq:Ua-star-close}
\|\widetilde u_a-U_a^*\|_{*,a}
\to0 \ \text{as}\ a\nearrow a^*.
\end{equation}
Since the whole space minimizer is unique up to rotations for \(a\) sufficiently close to $a^*$, there exists a rotation \(S_a\)
such that
\[
U_a^*(x)=\overline U_a(S_a^{-1}x).
\]
Set $p_a^*:=S_a\overline p_a,$
then \(p_a^*\) is a maximum point of \(U_a^*\). Define
\[
W_a^*(y)
:=
\delta_aU_a^*(p_a^*+\delta_a y).
\]
A direct computation gives
$W_a^*(y)
=
\overline W_a(S_a^{-1}y).$
Since \(q\) is radial,
\[
\begin{aligned}
\|W_a^*-q\|_{H^1(\mathbb R^2)}
&=
\|\overline W_a\circ S_a^{-1}
-q\circ S_a^{-1}\|_{H^1(\mathbb R^2)}
\\
&=
\|\overline W_a-q\|_{H^1(\mathbb R^2)}
\to0
\end{aligned}
\]
as $a\nearrow a^*$.
Thus
\begin{equation}
\label{eq:Wa-star-profile}
W_a^*
\to q
\
\text{strongly in }H^1(\mathbb R^2) \ \text{as}\ a\nearrow a^*.
\end{equation}

We next claim that
\begin{equation}
\label{eq:pa-star-xa}
\frac{|p_a^*-x_a|}{\delta_a}
\to0  \ \text{as}\ a\nearrow a^*.
\end{equation}
Set
\[
\xi_a:=\frac{p_a^*-x_a}{\delta_a}.
\]
Since
$p_a^*+\delta_a(y-\xi_a)
=
x_a+\delta_a y,$
we have
$
W_a^*(y-\xi_a)
=
\delta_aU_a^*(x_a+\delta_a y).
$
Hence, by \eqref{eq:Ua-star-close},
\begin{equation}
\label{eq:profile-close}
\left\|
\widetilde w_a
-
W_a^*(\,\cdot-\xi_a)
\right\|_{H^1(\mathbb R^2)}
\to0  \ \text{as}\ a\nearrow a^*.
\end{equation}
Suppose that \eqref{eq:pa-star-xa} is false. Then, up to a subsequence, there exists
\(\varepsilon_0>0\)
such that $|\xi_a|\ge\varepsilon_0.$
Passing to a further subsequence if necessary, either
$
\xi_a\to\xi
\in\mathbb R^2\setminus\{0\}\
\text{or}\
|\xi_a|\to\infty.
$
In the first case, by \eqref{eq:Wa-star-profile},
\[
W_a^*(\,\cdot-\xi_a)
\to
q(\,\cdot-\xi)
\
\text{strongly in }H^1(\mathbb R^2)
\]
as $a\nearrow a^*$.
Since
$\widetilde w_a
\to q
\
\text{strongly in }H^1(\mathbb R^2)$
as $a\nearrow a^*$,
we obtain
\[
\left\|
\widetilde w_a-W_a^*(\,\cdot-\xi_a)
\right\|_{H^1(\mathbb R^2)}
\to
\|q-q(\,\cdot-\xi)\|_{H^1(\mathbb R^2)}>0  \ \text{as}\ a\nearrow a^*,
\]
contradicting \eqref{eq:profile-close}.
In the second case, fix \(R>0\). Then
\[
\begin{aligned}
\|W_a^*(\,\cdot-\xi_a)\|_{L^2(B_R(0))}
&\le
\|W_a^*-q\|_{L^2(\mathbb R^2)}
+
\|q(\,\cdot-\xi_a)\|_{L^2(B_R(0))}
\\
&\to0
\end{aligned}
\]
as $a\nearrow a^*$.
On the other hand,
$
\widetilde w_a
\to q
\
\text{strongly in }L^2(B_R(0))
$
as $a\nearrow a^*$.
Choosing \(R>0\) such that
$\|q\|_{L^2(B_R(0))}>0,$
we obtain
\[
\left\|
\widetilde w_a-W_a^*(\,\cdot-\xi_a)
\right\|_{L^2(B_R(0))}
\to
\|q\|_{L^2(B_R(0))}>0  \ \text{as}\ a\nearrow a^*,
\]
again contradicting \eqref{eq:profile-close}. Therefore
\eqref{eq:pa-star-xa} holds.

Since \(d_\Omega\) is \(1\)-Lipschitz, $|d_\Omega(p_a^*)-d_\Omega(x_a)|
\le
|p_a^*-x_a|
=
o(\delta_a)  \ \text{as}\ a\nearrow a^*.$
Thus
\begin{equation}
\label{eq:distance-pa-star}
d_\Omega(p_a^*)
=
d_\Omega(x_a)+o(\delta_a)  \ \text{as}\ a\nearrow a^*.
\end{equation}
Recall that
$\frac{d_\Omega(x_a)}{\delta_a}
\to\infty$ as $a\nearrow a^*$.
Together with \eqref{eq:pa-star-xa}, this implies $p_a^*\in\Omega$
for \(a\) sufficiently close to $a^*$.
Since $\widetilde u_a=0
\ \text{on} \ \Omega^c$,
we have
\begin{equation}
\label{eqn:dist-low}
\begin{aligned}
4\operatorname{dist}_{*,a}
(\widetilde u_a,\mathcal G_a)^2
&\ge
\|\widetilde u_a-U_a^*\|_{*,a}^2
\\
&\ge
\int_{\Omega^c}(U_a^*)^2\mathrm dx.
\end{aligned}
\end{equation}
Let \(z_a\in\partial\Omega\) satisfy
\[
|p_a^*-z_a|
=
d_\Omega(p_a^*).
\]
Since \(\partial\Omega\) is \(C^2\), \(\Omega\) satisfies a uniform
exterior ball condition. Hence there exists \(r_{\rm ext}>0\) such that
\[
B_{r_{\rm ext}}
\bigl(z+r_{\rm ext}\nu(z)\bigr)
\subset\Omega^c
\
(z\in\partial\Omega),
\]
where \(\nu(z)\) denotes the outward unit normal to \(\partial\Omega\).
Fix \(\kappa>0\). For \(a\) sufficiently close to $a^*$,
$
3\kappa\delta_a<r_{\rm ext},
$
and hence
\[
B_{\kappa\delta_a}
\bigl(z_a+2\kappa\delta_a\nu(z_a)\bigr)
\subset\Omega^c.
\]
For $x\in B_{\kappa\delta_a}
\bigl(z_a+2\kappa\delta_a\nu(z_a)\bigr)$,
\begin{equation}
\label{eqn:exterior-ball-est}
d_\Omega(p_a^*)-3\kappa\delta_a
\le
|x-p_a^*|
\le
d_\Omega(p_a^*)+3\kappa\delta_a.
\end{equation}
Note that
$\frac{d_\Omega(p_a^*)}{\delta_a}
\to\infty  \ \text{as}\ a\nearrow a^*,$
then there exists a fixed \(L>0\) such that, for \(a\) sufficiently close to $a^*$ and  $x\in B_{\kappa\delta_a}
\bigl(z_a+2\kappa\delta_a\nu(z_a)\bigr)$,
\[
R_\eta\delta_a
\le
|x-p_a^*|
\le L.
\]
Applying the lower exponential estimate in Lemma~\ref{lem:whole}, we deduce that
\[
U_a^*(x)
\ge
c_{\eta,L}\delta_a^{-1}
\exp\left(
-\frac{(1+\eta)|x-p_a^*|}{\delta_a}
\right).
\]
Since the  ball $B_{\kappa\delta_a}
\bigl(z_a+2\kappa\delta_a\nu(z_a)\bigr)$ has area
\(\pi\kappa^2\delta_a^2\), it follows from
\eqref{eqn:exterior-ball-est} that
\[
\begin{aligned}
\int_{\Omega^c}(U_a^*)^2\mathrm dx
&\ge
\pi\kappa^2c_{\eta,L}^2
e^{-6\kappa(1+\eta)}
\exp\left(
-\frac{2(1+\eta)d_\Omega(p_a^*)}{\delta_a}
\right).
\end{aligned}
\]
Using \eqref{eq:distance-pa-star}, we conclude that
for \(a\) sufficiently close to $a^*$,
there is $c_\eta>0$,
\[
\int_{\Omega^c}(U_a^*)^2\mathrm dx
\ge
c_\eta
\exp\left(
-\frac{2(1+\eta)d_\Omega(x_a)}{\delta_a}
\right).
\]
Combining this with \eqref{eqn:Delta-low} and
\eqref{eqn:dist-low}, we finally obtain for \(a\) sufficiently close to $a^*$,
\[
\Delta_\Omega(a)
\ge
c_\eta\delta_a^2
\exp\left(
-\frac{2(1+\eta)d_\Omega(x_a)}{\delta_a}
\right).
\]
\end{proof}
\section{Proof of the main results}\label{sec:Proof-main}

In this section, we give the proof of Theorem~\ref{thm:main}.
 Theorem~\ref{thm:main-1} follows directly from
Lemmas~\ref{lem:exist}-\ref{lem:scale}.

\begin{proof}[Proof of Theorem~{\rm\ref{thm:main}}]
Choose a subsequence of $\{a_k\}$, still denoted by
$\{a_k\}$, along
which all the results of Sections \ref{sec:Blow-up}-\ref{sec:boundary-correction}
 hold. We
 prove that
\[
d_\Om(x_{a_k})\to D_\Om\ \text{as}\ k\to\infty.
\]
By Proposition~\ref{prop:exp},
\begin{equation*}
c_\eta\delta_{{a_k}}^2
e^{-2(1+\eta)d_\Om(x_{a_k})/\delta_{a_k}}
\le
\Delta_\Om(a_k)
\le
C_\eta\delta_{a_k}^{-N}
e^{-2(1-\eta)D_\Om/\delta_{a_k}}
\end{equation*}
for $k$ sufficiently large.
Moreover, Lemma~\ref{lem:scale} gives
$x_{a_k}\to x_0\in\Gamma$ as $k\to\infty$. Hence,
\[
\lim_{k\to\infty} d_\Om(x_{a_k})=d_\Om(x_0)\le D_\Om.
\]
Suppose now that $d_\Om(x_{a_k})\not\to D_\Om$.  Then there exist
$\sigma>0$ and a subsequence such that
\[
d_\Om(x_{a_k})\le D_\Om-\sigma.
\]
Choose $\eta>0$ so small that
\[
(1+\eta)(D_\Om-\sigma)<(1-\eta)D_\Om.
\]
Then
\[
\frac{c_\eta}{C_\eta}
\delta_{a_k}^{N+2}
\exp\left\{
\frac{
2[(1-\eta)D_\Om-(1+\eta)(D_\Om-\sigma)]
}{\delta_{a_k}}
\right\}
\le1,
\]
whereas the left-hand side tends to $+\infty$, a contradiction.  Thus
\[
d_\Om(x_{a_k})\to D_\Om\ \text{as}\ k\to\infty
\]
and
$d_\Om(x_0)=D_\Om$ with $x_0\in\mathcal M_\Om$.  Therefore,
$
\dist(x_{a_k},\mathcal M_\Om)\to0
$
as $k\to\infty$.
\end{proof}
\ \\
\noindent\textbf{\Large Acknowledgements}

\noindent {This research is supported by National Natural Science Foundation of China (No.12371120).}
\vspace{0.8em}

\noindent\textbf{\Large Declaration}

\noindent {{\bf Conflict of interest} The authors do not have conflict of interest.}

\end{document}